\documentclass{amsart}
\usepackage{amsmath}
\usepackage{amssymb}
\usepackage{amsthm}
\usepackage{dsfont}
\usepackage{lineno}
\usepackage{subfigure}
\usepackage{graphicx}
\usepackage{multirow}
\usepackage{color}
\usepackage{xspace}
\usepackage{pdfsync}
\usepackage{hyperref} 
\usepackage{algorithmicx}
\usepackage{algpseudocode}
\usepackage{algorithm}
\usepackage{mathtools}
\usepackage{enumitem}
\usepackage{comment}
\graphicspath{{Figures/}}

\newcommand{\bq}{\begin{equation}}
\newcommand{\eq}{\end{equation}}

\algnewcommand{\LineComment}[1]{\State \(\triangleright\) #1}

\newtheorem{theorem}{Theorem}

\theoremstyle{lemma}
\newtheorem{lemma}[theorem]{Lemma}

\newtheorem{definition}[theorem]{Definition}
\newtheorem{proposition}[theorem]{Proposition}

\newtheorem{remark}[theorem]{Remark}

\newtheorem{hypothesis}[theorem]{Hypothesis}

\theoremstyle{remark}

\makeatletter
\newcommand\appendix@section[1]{%
\refstepcounter{section}%
\orig@section*{Appendix \@Alph\c@section: #1}%
}
\let\orig@section\section
\g@addto@macro\appendix{\let\section\appendix@section}
\makeatother

\begin{document}

\title[Optimal Transport in Euclidean Space with Unbounded Support]{Pointwise Convergence Analysis for Approximations of Optimal Transport Problems with a Target Measure that Has Unbounded Support}

\author{Axel G. R. Turnquist}
\address{Beijing Institute of Mathematical Sciences and Applications, Beijing, China}
\email{agrt@bimsa.cn}

\begin{abstract}
We consider the Monge problem of optimal transport between a compactly supported source measure and a target probability measure with unbounded support. We consider the convergence of optimal maps and potential functions when the target measure is approximated, with special attention given to a cutoff approximation in which we parametrize the approximation by a ``cutoff" radius $R$ for the target measure. We study both the convergence of the mapping and potential functions for the forward and inverse problem in many cases such as 1) the radially symmetric case with the cutoff approximation for general cost functions and 2) the non-radially symmetric case with the squared distance cost function. We derive quantitative non-asymptotic pointwise convergence rates in special cases, building on the $L^2$ convergence rates established in Delalande and M\`{e}rigot~\cite{merigot1}. These results can be used, for instance, to justify the use of certain types of numerical Monge-Amp\`{e}re equation solvers in computationally solving the problem.
\end{abstract}

\date{\today}    
\maketitle
\tableofcontents

\section{Introduction}\label{sec:introduction}
In this manuscript, we consider solving the Monge optimal transport problem in Euclidean space from a source measure with bounded support to a target measure with unbounded support. We derive quantitative pointwise convergence rates of a cutoff approximation of such a problem.

\subsection{The Monge Problem of Optimal Transport}

We define the Monge problem of optimal transport on $\mathbb{R}^n$. A typical presentation of optimal transport will include the Kantorovich formulation, see any book on optimal transport, for example the very popular Villani~\cite{Villani1}, but we choose to omit this for the sake of brevity.

\begin{definition}[Monge problem of optimal transport]
Given probability distributions $\mu, \nu$ (source and target measures, respectively) supported on $\mathbb{R}^n$ and a cost function $c: \mathbb{R}^n \times \mathbb{R}^n \rightarrow \mathbb{R}$, we define the Monge problem of optimal transport to be the minimization of the following cost functional:
\begin{equation}
C(\mu, \nu) := \int_{\mathbb{R}^d} c(x,S(x)) d\mu(x),
\end{equation}
subject to the constraint $\mu(S^{-1}(E)) = \nu(E)$ for every Borel set $E \subset \mathbb{R}^n$, where $S^{-1}(E) := \{x \in \mathbb{R}^d: S(x) \in E \}$, which is often denoted by the shorthand $T_{\#} \mu = \nu$. The optimal map $T$ is often known as a pushfoward map, since $\nu$ is the pushforward measure of $\mu$ with respect to the map $T$.
\end{definition}

Of course, in general, the cost functional does not necessarily have a minimizer. Even if it has a minimizer, it may not be unique. In order to guarantee the existence of minimizers of the Monge problem, for, say the cost function $c(x,y) = \vert x - y \vert^2$, one has to restrict the class of source measures. A simple condition, which is sufficient but not necessary, is that $\mu$ is absolutely continuous with respect to the Lebesgue measure. Then, in this case, not only can be guarantee the existence of minimizers, but the choice of cost function $c(x,y) = \vert x - y \vert^2$ will actually guarantee that the minimizer is unique.

To the Monge problem of optimal transport is associated a related dual formulation of optimal transport, see Villani~\cite{Villani1}. The maximizers of such a problem include a function known as the potential function. The potential function is often intimately related to the optimal mapping. For instance, in the case of the squared distance cost $c(x,y) = \vert x - y \vert^2$, the optimal mapping (when it exists) is $T = \nabla \phi$, where the potential function $\phi$ is known as the Brenier potential after Yann Brenier who first rigorously proved the result in Brenier~\cite{brenier}. Thus, are therefore interested in the following quantities associated with the optimal transport problem, which are endlessly useful in applications and theory:
\begin{enumerate}
\item The total cost $C(\mu, \nu)$. For the choice $c(x,y) = \vert x - y \vert^{p}$, for $p \in [1, \infty)$ the $p$th root of this quantity is known as the Wasserstein-$p$ distance between $\mu$ and $\nu$, which is typically written as $W_{p}(\mu, \nu)$, that is:
\begin{equation*}
W_{p}(\mu, \nu) := \left( \int_{\mathbb{R}^d} \vert x - S(x) \vert^{p} d\mu(x) \right)^{1/p}.
\end{equation*}
\item The optimal mapping $T$ (may not necessarily exist and or be unique), that is $T$ such that $C(\mu, \nu) = \int_{\mathbb{R}^d} c(x,T(x)) d\mu(x)$ and $T_{\#} \mu = \nu$.
\item The potential function $\phi$ (is often unique up to a constant). In the case of $c(x,y) = \vert x - y \vert^2$, the potential function $\phi$ is known as the Brenier potential.
\end{enumerate}

\subsection{The Unbounded Support Problem and the Cutoff Approximation}

In this manuscript, we consider the Monge optimal transport problem in $n$-dimensional Euclidean space from a source measure $\mu$, supported on a compact set $\Omega \subset \mathbb{R}^n$ with density function $f$ to a target measure $\nu$, supported on $\mathbb{R}^n$ with density function $g$. The key feature of the problem we consider is the fact that the support of $\nu$ may be unbounded. Generally, we consider continuous, non-negative cost functions $c: \mathbb{R}^n \times \mathbb{R}^n \rightarrow \mathbb{R}$, but we will modify and/or restrict these assumptions later.

For various reasons, often motivated by applications, one wishes to approximate the target measure $\nu$ in some way. That is, define an arbitrary sequence $\{ \nu_{\rho} \}$ where $\nu_{\rho} \rightharpoonup \nu$ weakly as $\rho \rightarrow 0$, and study how the sequence of optimal transport problems changes as a function of $\rho$. That is, we may study how $C(\mu, \nu_{\rho})$, $T_{\rho}$ and $\phi_{\rho}$ converge to $C(\mu, \nu)$, $T$, and $\phi$, respectively (provided that $T$ is unique and $\phi$ has been uniquely identified). For example, it has been of longstanding interest to study such converge when $\nu_{\rho}$ is the empirical measure of $\nu$, see~\cite{berman},~\cite{stabilitywenbo}, among others.

In this manuscript, we will mostly consider the following way of approximating $\nu$ weakly. We define a ``cutoff" radius $R>0$ and define the cutoff probability measure $\nu_{R} := \nu/\nu(B_{R}(0))$, where $B_{R}(0)$ denotes the open Euclidean ball of radius $R$ centered at the origin. This is a natural way to approximate the optimal transport problem if one is motivated by using Monge-Amp\`{e}re equation solvers. We will call the optimal transport problem from $\mu$ to $\nu$ the forward problem and the optimal transport problem from $\nu$ to $\mu$ the inverse problem. We denote by $T_{\mu}$ and $\phi_{\mu}$ the optimal mapping and potential function, respectively, for the forward problem. Likewise, $T_{\nu}$ and $\phi_{\nu}$ denote the optimal mapping and potential function, respectively, for the inverse problem. For the cutoff problem, these will be denoted as $T_{\mu; R}$, $\phi_{\mu; R}$, $T_{\nu; R}$ and $\phi_{\nu; R}$, respectively. For the entire manuscript, we will be assuming that $\mu$ and $\nu$ have probability density functions. These will be denoted as $f_{\mu}$ and $f_{\nu}$, respectively. We denote the support of $\mu$ as the closed set $\Omega \subset \mathbb{R}^n$.

In Sections~\ref{sec:radial} and~\ref{sec:general}, we study the convergence of $T_{R}$ and $\phi_{R}$ to $T$ and $\phi$, respectively. We also consider the inverse optimal transport problem from $\nu_{R}$ to $\mu$. In Section~\ref{sec:radial} we derive convergence results in the case where $\mu$ and $\nu$ are radially symmetric for a wide variety of different cost functions. In Section~\ref{sec:general}, for the cost function $c(x,y) = \vert x - y \vert^2$, we derive quantitative convergence results, which we consider to be the main result of this manuscript. These results build off the $L^2$ convergence results of Delalande and M\`{e}rigot~\cite{merigot1}, for $\phi_{R}$ and almost-everywhere convergence results for $T_{R}$. We also derive a slew of results in anticipation that the $L^2$ convergence theory will be be expanded in the future.

\subsection{Discussion on Computational Methods and Applications}

Most existing algorithms available for computing optimal-transport-related quantities between a probability measure on $\Omega$ and a probability measure with unbounded support would involve sampling from the target measure. One approach would be to directly sample from both distributions a certain number of times and then compute the discrete Kantorovich plan using some modern variant of a network simplex algorithm, see Peyr\'{e} and Cuturi~\cite{Peyre_ComputationalOT}. In order to extract the optimal transport mapping one could then use a multigrid method as in Oberman and Ruan~\cite{obermanruan}. Another approach would be to sample from the target distribution and then use the tools of semi-discrete optimal transport, see~\cite{semidiscrete}, for example.

The second popular strategy is to use the entropically regularize the discrete optimal transport problem and then use the celebrated Sinkhorn algorithm, see Cuturi~\cite{cuturi} to get a ``smeared out" approximation of the Kantorovich plan. This computation is fast but difficult to use to discern the optimal transport mapping, although one can use the barycentric projection to approximate the optimal transport mapping, see Peyr\'{e} and Cuturi~\cite{Peyre_ComputationalOT}.

Another further interesting possibility would be to formulate the problem as an instance of a Moment Sum-of-Squares problem, see Lasserre~\cite{lasserre}, which can be used to estimate the support of the optimal transport mapping, see Mula and Nouy~\cite{momentsosOT}. This has not be done for unbounded problems, but could be applied to such problems, so it is not clear how well this works in practice.

The current manuscript does not address the issue where both $\mu$ and $\nu$ have unbounded support. However, the techniques in this manuscript may be used for certain such applications (where a pushforward map is needed) via ``bottlenecking". That is, we may compute a pushforward mapping $T$ from $\mu$ to $\nu$ via an intermediate step by mapping it to, say, a constant density on an n-cube of side length $2R$. In this way, we will not have an optimal transport map overall, but we do produce a pushforward map which can still be used for applications such as sampling and moving mesh methods. We cannot use the bottlenecking method if our goal is to compute the optimal transport map from $\mu$ to $\nu$.

For sampling methods, in Berman~\cite{berman} it was shown that if the Brenier potentials are $C^2$, then the rates are $\mathcal{O}(\sqrt{h})$ in the $H_1$ norm and if the Brenier potentials are not known to be smooth then this degrades to $\mathcal{O}(1/2^{d})$. Sampling methods are one of the only available tools in high dimensions, but still, the process of sampling, even if the density function is known via MCMC has some drawbacks and is slow when it needs to be used thousands or millions of times for many practical problems, see Moselhy and Marzouk for a more in-depth discussion~\cite{bayesianoptimalmaps}.

Our cutoff method is an attempt to derive bounds and better performance when (a) the density function is known and, especially, when (b) the integral of the density function over balls $B_{R}(0)$ can be efficiently computed. Decent convergence rates will be demonstrated under very mild assumptions. Excellent convergence rates will be demonstrated for a log-concave probability distributions, see Definition~\ref{def:logconcave}. This class of distributions includes sub-exponential (and therefore exponential), sub-Gaussian (and therefore Gaussian), as well as many more.

\subsection{Discussion on Quantitative $L^2$ Stability Bounds}

The $L^2$ bounds we utilize originate from a line of work involving many authors whose objective was to show whether or not the map $\mu \mapsto T_{\mu}$ is bi-H\"{o}lder, where $T_{\mu}$ is the optimal transport map from a measure $\mu$ to a measure $\nu$, both in Euclidean space, with finite second moments. A fundamental result from Andoni, Naor, and Neiman~\cite{snowflake} showed that the probability space $(\mathcal{P}_{2}(\mathbb{R}^n), W_{2})$ for dimension $n \geq 3$ does not admit a bi-H\"{o}lder embedding into a Hilbert space. That is, in general, we cannot hope for an inequality of the type:
\begin{equation}\label{eq:bounD}
\Vert T_{\mu_{1}} - T_{\mu_{2}} \Vert_{L^2(\mu)} \leq C W_{2}(\nu_{1}, \nu_{2}),
\end{equation}
in the most general case, where $T_{\mu_{1}}$ and $T_{\mu_{2}}$ are the optimal maps from $\mu$ to $\nu_1$ and $\nu_{2}$, respectively. An early result by Ambrosio and reported in Gigli~\cite{giglibounds} showed that a bound of the type in Inequality~\eqref{eq:bounD} could be obtained for $\mu$ and $\nu$ supported on a compact set, with $W_1$ instead of $W_2$ on the right-hand side, and assuming that the optimal mapping $T_{\mu_{1}}$ was $L$-Lipschitz. For our purposes, a natural predecessor to the Delalande and M\`{e}rigot result is due to Berman~\cite{berman}, where the author proved the following:
\begin{theorem}
Let $\mu$ have a probability density $f$ over a compact convex set $X$ and $f$ is bounded from above and below. Let $Y$ be a bounded connected open set with a Lipschitz boundary. Then, there exists a constant $C$ depending only on $\mu$, $X$ and $Y$ such that for any probability measures $\nu_{1}$ and $\nu_{2}$ on $Y$ we have:
\begin{equation}
\Vert T_{\nu_{1}} - T_{\nu_{2}} \Vert_{L^2(\mu)} \leq C W_{1}(\nu_{1}, \nu_{2})^{\alpha(n)},
\end{equation}
where $\alpha(n)$ is an exponent that depends only on the ambient dimension $n$.
\end{theorem}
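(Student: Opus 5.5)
The plan is to reduce the stability of the maps to a stability estimate for the dual (Kantorovich) functional, in the spirit of Berman's argument. Fix $\mu$ with density $f$ bounded above and below on the compact convex set $X$. Work with the semi-dual: for a probability measure $\nu$ on $Y$ and a function $\psi$ on $\bar Y$, set
\[
\mathcal{K}_\nu(\psi)=\int_X \psi^*\,d\mu+\int_Y \psi\,d\nu ,
\]
where $\psi^*(x)=\sup_{y\in \bar Y}(x\cdot y-\psi(y))$. The Brenier potentials are $\phi_i=\psi_i^*$ with $T_{\nu_i}=\nabla\phi_i$, and $\psi_i$ minimizes $\mathcal{K}_{\nu_i}$. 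Normalize each $\psi_i$, say by $\int \phi_i\,d\mu=0$. Since $Y$ is bounded, the $\psi_i$ (taken $c$-concave) and the $\phi_i$ are uniformly Lipschitz, with constant $\operatorname{diam}$-type bounds depending only on $X$ and $Y$.

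The first step is a strong-convexity-type inequality for the map $\psi\mapsto\int\psi^*\,d\mu$ around its minimizers. I would compare $\mathcal{K}_{\nu_1}(\psi_2)-\mathcal{K}_{\nu_1}(\psi_1)$ with $\Vert\nabla\phi_1-\nabla\phi_2\Vert_{L^2(\mu)}^2$. The key tool is a Poincaré/Brunn–Minkowski-type estimate on the convex set $X$, using the bounds on $f$. This should give
\[
\Vert\nabla\phi_1-\nabla\phi_2\Vert_{L^2(\mu)}^2\lesssim \Vert \phi_1-\phi_2\Vert_{L^\infty}^{\beta}\,\bigl(\mathcal{K}_{\nu_1}(\psi_2)-\mathcal{K}_{\nu_1}(\psi_1)\bigr)^{\gamma}.
\]
It can be obtained by a Gagliardo–Nirenberg interpolation between $\Vert\phi_1-\phi_2\Vert_{L^1}$ and the uniform Lipschitz/semiconvexity bounds. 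The point is that convexity of $\phi_i$ controls $\nabla\phi_i$ in $BV$, so an $L^1$ bound on $\phi_1-\phi_2$ interpolates to an $L^2$ bound on the gradients with a dimension-dependent loss. This loss is the source of the exponent $\alpha(n)$.

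The second step uses optimality. Because $\psi_i$ minimizes $\mathcal{K}_{\nu_i}$, we have
\[
\mathcal{K}_{\nu_1}(\psi_2)-\mathcal{K}_{\nu_1}(\psi_1)\le \int(\psi_2-\psi_1)\,d(\nu_1-\nu_2),
\]
by adding the two optimality inequalities. The right-hand side is at most $\operatorname{Lip}(\psi_2-\psi_1)\,W_1(\nu_1,\nu_2)\le C\,W_1(\nu_1,\nu_2)$ by Kantorovich–Rubinstein duality and the uniform Lipschitz bound. Combining this with the first step and the uniform $L^\infty$ bound on $\phi_1-\phi_2$ yields the claim, with $\alpha(n)$ equal to the resulting product of interpolation exponents.

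The main obstacle is the first step: turning a small gap in the dual energy into control of the gradients. I would first prove a lower bound of the gap by $\int(\phi_2-\phi_1)^2$, or by an $L^1$ quantity. For this I would use the fact that the density of $\mu$ is bounded below on a convex set, via a weighted Poincaré–Wirtinger inequality, together with the identity
\[
\int\psi_2^*\,d\mu-\int\psi_1^*\,d\mu-\int(\psi_1-\psi_2)\,d\nu_1\ \ge\ \int\bigl(\phi_2-\phi_1-\nabla\phi_1\cdot\nabla(\phi_2-\phi_1)\bigr)\,d\mu .
\]
The integrand on the right is a Bregman divergence of the convex function $\phi_2$. I would then interpolate the gradient using the $BV$ bound $\int|D^2\phi_i|\le C$, which follows from convexity and the Lipschitz bounds on $X$. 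Lipschitz regularity of $\partial Y$ and connectedness of $Y$ enter only to guarantee uniqueness up to constants and the normalization of $\psi$. I expect the connectedness of $Y$ and the convexity of $X$ to be exactly what makes the Poincaré step uniform.
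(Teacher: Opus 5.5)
The paper gives no proof of this statement. It is quoted in the introduction from Berman~\cite{berman} as background, and Berman's own argument rests on complex-geometric, pluripotential-theoretic inequalities, so your proposal has to stand on its own.

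\textbf{What works.} Your second step is correct and standard. Adding the two optimality inequalities and applying Kantorovich--Rubinstein to the $c$-concave representatives, which are $\max_{x\in X}|x|$-Lipschitz on $\bar Y$, gives $\mathcal{K}_{\nu_1}(\psi_2)-\mathcal{K}_{\nu_1}(\psi_1)\le C\,W_1(\nu_1,\nu_2)$. Your interpolation is also fine as far as it goes. Since $|\nabla\phi_i|$ is bounded and $X$ is convex, $\Delta\phi_i(X)\le C$, and an integration by parts gives $\Vert\nabla(\phi_1-\phi_2)\Vert_{L^2(X)}^2\lesssim\Vert\phi_1-\phi_2\Vert_{L^\infty(X)}\lesssim\Vert\phi_1-\phi_2\Vert_{L^1(X)}^{1/(n+1)}$.

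\textbf{The gap.} The interpolation only converts smallness of the potentials into smallness of the gradients. It never involves the Kantorovich gap, so your displayed estimate with the factor $(\mathcal{K}_{\nu_1}(\psi_2)-\mathcal{K}_{\nu_1}(\psi_1))^{\gamma}$ does not follow from it. What is missing is a coercivity statement: a small gap forces $\phi_1-\phi_2$ to be small, for arbitrary $\nu_1,\nu_2$ on $Y$. That is essentially the entire content of the theorem, and the route you sketch for it fails at three points.

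First, your displayed ``identity'' is false. Using $\nu_1=(\nabla\phi_1)_{\#}\mu$ and $\psi_1(\nabla\phi_1(x))=x\cdot\nabla\phi_1(x)-\phi_1(x)$, one gets
\begin{equation*}
\mathcal{K}_{\nu_1}(\psi_2)-\mathcal{K}_{\nu_1}(\psi_1)=\int_X\bigl(\phi_2(x)+\psi_2(\nabla\phi_1(x))-x\cdot\nabla\phi_1(x)\bigr)\,d\mu(x).
\end{equation*}
This is a $\mu$-average of a Bregman-type divergence of the target-side potential $\psi_2$ between $\nabla\phi_1(x)$ and $\nabla\phi_2(x)$. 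Your right-hand side is first order in $\phi_2-\phi_1$, while the gap is second order. For example, take $n=1$, $\mu=\nu_1$ uniform on $[0,1]$, and $\nu_2$ uniform on $[a,1+a]$ with small $a<0$. Then your right-hand side equals $|a|/2$, while the gap equals $a^2/2-|a|^3/6$.

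Second, the correct integrand dominates $c\,|\nabla\phi_1(x)-\nabla\phi_2(x)|^2$ when $\psi_2$ is uniformly convex, i.e.\ when $\nabla\phi_2$ is Lipschitz (the Ambrosio--Gigli setting). For general $\nu_2$ there is no such pointwise bound; for discrete $\nu_2$, for instance, $\nabla\phi_2$ is piecewise constant.

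Third, Poincar\'e--Wirtinger bounds $\Vert\phi_1-\phi_2\Vert_{L^2(\mu)}$ by $\Vert\nabla\phi_1-\nabla\phi_2\Vert_{L^2(\mu)}$, which is the wrong direction. Using it to lower-bound the gap by $\int(\phi_2-\phi_1)^2\,d\mu$ presupposes that the gap already controls the gradient difference, which is what you are trying to prove.

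\textbf{What is needed.} You need a genuine strong-convexity estimate for $\psi\mapsto\int_X\psi^*\,d\mu$ that uses only the regularity of $\mu$ and the convexity of $X$. Such an estimate, as obtained by Delalande and M\'erigot~\cite{merigot1}, combined with your Kantorovich--Rubinstein step, yields $\Vert\phi_1-\phi_2\Vert_{L^2(\mu)}\lesssim W_1(\nu_1,\nu_2)^{1/2}$. This is the bounded-target form of the potential bound in Theorem~\ref{thm:delalandemerigot}. With it in hand, your interpolation does finish the proof with a dimension-dependent exponent. Naming Brunn--Minkowski or Poincar\'e does not supply this lemma.

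\textbf{A side correction.} The hypotheses on $Y$ are not what gives uniqueness. The potential $\phi_i$ is unique up to constants because $\mu$ has positive density on the connected set $X$. The dual potential $\psi_i$, by contrast, is in general not unique when $\nu_i$ is singular.
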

This theorem is quite general, but has two drawbacks, which were later improved upon by Delalande and M\`{e}rigot~\cite{merigot1}. The first is that the exponent $\alpha(n)$ (details omitted) is suboptimal. The second drawback, most important for our purposes, is that the target domain $Y$ must be compact. It is precisely these two goals that motivated the work by Delalande and M\`{e}rigot. We will present their theorem in Theorem~\ref{thm:delalandemerigot}. Another benefit is that the work of Delalande and M\`{e}rigot supplied $L^2$ bounds on the convergence of the Brenier potentials. These, along with convexity properties of the Brenier potentials, allows for further convergence results, which we explore in this manuscript.

These estimates have garnered interest because they can be used to justify the validity of using linearized optimal transport for applications as well as justify numerical approximations of the problem, especially in the case where the measures (either the target measure or both) were approximated by empirical distributions.  On the the numerical side, there is the work by Li and Nochetto~\cite{stabilitywenbo}, in which a quantitative stability result was derived for approximating the transport plan from $\mu$ to $\nu$ on compact sets $X$ and $Y$, respectively, when both $\mu$ and $\nu$ are approximated by $\delta$ measures.

\subsection{Outline of Manuscript}

In Section~\ref{sec:definitions}, we introduce many definitions and fundamental theorems that will be used in this manuscript. In Section~\ref{sec:radial}, for a wide variety of cost functions, we show pointwise convergence rates for the cutoff problem and the inverse cutoff problem for the optimal mapping and the potential function in the case where the source and target probability measure is radially symmetric. We also demonstrate how quickly the rates of convergence can be for certain classes of distribution functions, including log-concave distribution functions. In Section~\ref{sec:general}, which constitutes the main results of this manuscript, we consider the cost function $c(x,y) = \frac{1}{2}c(x,y)^2$ and we relax the assumption of radial symmetry and use some fundamental $L^2$ bounds established by Delalande and M\'{e}rigot~\cite{merigot1} in order to derive many pointwise convergence rates for the Brenier potentials and inverse Brenier potentials for the cutoff problem. The establishment of these rates can be made quantitative in certain cases, but these pointwise convergence rates remain valid for any other approximation methods provided that rates for $L^1$ convergence have been established. We again show explicit rates in some cases like log-concave distribution functions. In the general case, we cannot derive pointwise convergence rates for the Monge mapping (or the inverse), but we derive almost-everywhere pointwise convergence guarantees. In Section~\ref{sec:computation} we give an example of a provably convergent monotone finite-difference approximation that can be used for the computation of the inverse Brenier potential, with the guidance of the theory established originally in Hamfeldt~\cite{HamfeldtBVP2}. In Section~\ref{sec:conclusion} we summarize the contributions of our manuscript.

\section{Definitions and Discussion}\label{sec:definitions}
In Section~\ref{sec:theoreticalbackground}, we introduce many of the useful concepts and results which will be used in this manuscript.

\subsection{Theoretical Background}\label{sec:theoreticalbackground}

Here we state the optimality conditions for optimal transport for the squared distance cost function, often known as Brenier's theorem. One can get a similar conclusion for more general cost functions satisfying the Ma-Trudinger-Wang conditions, see~\cite{MTW}, however the potential function is no longer convex, but $c$-convex, and the mapping is a more complicated function of the gradient of the $c$-convex potential.

\begin{theorem}[Optimality Conditions for Optimal Transport with Squared Distance Cost Function, see Loeper~\cite{LoeperReg}, for example]\label{thm:brenier}
Suppose $\mu, \nu \in \mathcal{P}_{2}(\mathbb{R}^d)$ and $\mu$ is absolutely continuous with respect to the Lebesgue measure on $\mathbb{R}^d$. Let $c(x,y) = \frac{1}{2} \vert x - y \vert^2$. Then, the optimal transport mapping $T$ exists and is unique almost everywhere in the support of $\mu$. It satisfies the equation $T_{\#}\mu = \nu$. Furthermore, there exists a lower semi-continuous convex function $u$, known as the Brenier potential function, such that $T(x) = \nabla u(x)$ almost everywhere in the support of $\mu$. Note that $v = u+C$ also satisfies $T(x) = \nabla v(x)$ almost everywhere in the support of $\mu$, so $u$ is unique up to a constant.

The converse also holds. Suppose that you have a lower semi-continuous convex function $u$. Define $T = \nabla u(x)$ almost everywhere. If $T$ also satisfies $T_{\#}\mu = \nu$, then $T$ is an optimal mapping. Optimal maps are equal $\mu$-a.e. and $\text{Cl}(\nabla u(F \cap \text{spt}(\mu))) = \text{spt}(\nu)$ where $F$ is the set of differentiability points of $u$.

\end{theorem}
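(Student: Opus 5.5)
The proof goes through the Kantorovich formulation with cost $c(x,y)=\tfrac12|x-y|^2$, followed by a reduction to convex analysis. First, existence of an optimal plan. Since $\mu,\nu\in\mathcal{P}_2(\mathbb{R}^d)$, the set of couplings $\Pi(\mu,\nu)$ is tight, and therefore weakly compact by Prokhorov. The cost is continuous and bounded below, so the functional $\gamma\mapsto\int c\,d\gamma$ is weakly lower semicontinuous. It is also finite, because it is bounded by $\int|x|^2d\mu+\int|y|^2d\nu$. Hence a minimizer $\gamma$ exists.

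Second, I will show that any minimizer has cyclically monotone support. The standard perturbation argument applies: if finitely many points in $\operatorname{spt}\gamma$ violated $c$-cyclical monotonicity, then reshuffling small masses near them would strictly lower the cost. Expanding the square, $c$-cyclical monotonicity for the quadratic cost is exactly ordinary cyclical monotonicity of $\operatorname{spt}\gamma\subset\mathbb{R}^d\times\mathbb{R}^d$.

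Rockafellar's theorem then gives a lower semicontinuous convex function $u$ with $\operatorname{spt}\gamma\subset\partial u$. We may take $u=\sup$ of the usual chain sums, which is proper because $\operatorname{spt}\gamma$ is nonempty.

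Third, I will pass from plans to maps. A convex function is differentiable off a Lebesgue-null set; this follows from Rademacher on the interior of its domain together with the fact that the boundary of a convex set is null. Since $\mu\ll\mathcal{L}^d$, for $\mu$-a.e. $x$ the subdifferential is the singleton $\partial u(x)=\{\nabla u(x)\}$. Consequently $\gamma=(\mathrm{id}\times\nabla u)_\#\mu$. Setting $T=\nabla u$, we get $T_\#\mu=\nu$, and $T$ is optimal for Monge because Monge's infimum is at least Kantorovich's.

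Uniqueness comes from strict convexity. If $T_1$ and $T_2$ are both optimal, then $\tfrac12\big[(\mathrm{id}\times T_1)_\#\mu+(\mathrm{id}\times T_2)_\#\mu\big]$ is again optimal. By the previous step it must be supported on the graph of a single map. That forces $T_1=T_2$ $\mu$-a.e., and the same argument identifies the Brenier potentials up to an additive constant on connected components of the support. For a general support this last point is only a.e. gradient uniqueness, which is what the statement claims.

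For the converse, let $T=\nabla u$ with $u$ convex and $T_\#\mu=\nu$. Then the plan $(\mathrm{id}\times\nabla u)_\#\mu$ is supported in $\partial u$, which is cyclically monotone. I will verify optimality via duality. Fenchel–Young gives $u(x)+u^*(y)\ge x\cdot y$ for all $(x,y)$, with equality on $\partial u$. Integrating against any coupling $\pi$ then shows
\[\int x\cdot y\,d\pi\le\int u\,d\mu+\int u^*d\nu=\int x\cdot\nabla u\,d\mu.\]
For $\mathcal{P}_2$ measures this is equivalent to minimality of the quadratic cost. The step needing care is integrability of $u$ and $u^*$. I will handle it by the standard truncation trick: for an affine minorant $\ell$, the functions $u-\ell$ and $u^*-\ell^*$ are bounded below, and finite second moments control the affine parts. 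In this way the integrals make sense and the inequality holds.

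Finally, I will prove the support identity $\operatorname{Cl}(\nabla u(F\cap\operatorname{spt}\mu))=\operatorname{spt}\nu$. Since $\nu=(\nabla u)_\#\mu$ and $\mu(F^c)=0$, the set $\nabla u(F\cap\operatorname{spt}\mu)$ has full $\nu$-measure, so its closure contains $\operatorname{spt}\nu$. For the reverse inclusion, take $x\in F\cap\operatorname{spt}\mu$. The subdifferential is upper semicontinuous at differentiability points: $y_k\in\partial u(x_k)$ with $x_k\to x$ forces $y_k\to\nabla u(x)$. Hence small neighborhoods of $x$, which carry positive $\mu$-mass, are sent $\mu$-a.e. into any given neighborhood of $\nabla u(x)$, and so $\nabla u(x)\in\operatorname{spt}\nu$.

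I expect the main obstacle to be the integrability and duality bookkeeping in the converse for unbounded supports, since $u$ may take the value $+\infty$ off $\operatorname{spt}\mu$.
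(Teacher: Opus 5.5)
The paper gives no proof of this theorem. It is imported as background (Brenier's theorem with McCann's refinements, cited via Loeper and later used as the ``Brenier--McCann theorem'' from Maggi), so there is no argument of the paper's to compare with.

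Your outline is the standard proof from that literature, and it is correct in substance:
\begin{itemize}
\item existence of an optimal plan by Prokhorov and lower semicontinuity;
\item cyclical monotonicity of optimal supports, then Rockafellar's theorem;
\item a.e.\ differentiability of convex functions combined with $\mu\ll\mathcal{L}^d$;
\item uniqueness via the midpoint of two optimal plans;
\item the converse via Fenchel--Young;
\item the support identity via upper semicontinuity of $\partial u$ at differentiability points.
\end{itemize}
In the a.e.\ differentiability step, say explicitly that $\mu$ is concentrated on $\{\partial u\neq\emptyset\}\subset\operatorname{dom}u$, because $\gamma$ is concentrated on the graph of $\partial u$, and that $\partial(\operatorname{dom}u)$ is Lebesgue-null. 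Beyond that, two points need repair.

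\textbf{Integrability in the converse.} The bookkeeping is misstated. If $\ell(x)=a\cdot x+b\le u$, then $\ell^*$ equals $-b$ at $y=a$ and $+\infty$ elsewhere. Moreover $u\ge\ell$ gives $u^*\le\ell^*$, the wrong direction, so $u^*-\ell^*$ is not bounded below. What you need is an affine minorant of $u^*$ itself, which properness gives for free: $u^*(y)\ge x_0\cdot y-u(x_0)$ for any $x_0\in\operatorname{dom}u$.

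With that, the negative parts of $u$ and $u^*$ are dominated by $C(1+|x|)$ and $C(1+|y|)$, which are integrable. Now integrate the pointwise identity $u(x)+u^*(\nabla u(x))=x\cdot\nabla u(x)$ against $\mu$; the right side is integrable since $\mu,\nu\in\mathcal{P}_2$. This shows that $\int u\,d\mu$ and $\int u^*\,d\nu$ are both finite. After that, splitting $\int(u(x)+u^*(y))\,d\pi=\int u\,d\mu+\int u^*\,d\nu$ for an arbitrary coupling $\pi$ is legitimate. No truncation is needed. The fact that $u$ may be $+\infty$ off $\operatorname{spt}\mu$ is harmless, since $u<\infty$ $\mu$-a.e.

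\textbf{Uniqueness of the potential.} The midpoint argument yields $\nabla u_1=\nabla u_2$ $\mu$-a.e.\ and nothing more. So your phrase ``up to an additive constant on connected components of the support'' is not delivered by it. The statement's own sentence (``$v=u+C$ also works, so $u$ is unique up to a constant'') is a non sequitur and fails for disconnected supports, so you are right not to claim it in general.

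Where it is wanted, one needs an extra hypothesis and argument. For example, suppose $\mu$ has positive density on a connected open set $U$ with $\mu(\mathbb{R}^d\setminus U)=0$. Then $u_1-u_2$ is locally Lipschitz on $U$ with a.e.\ vanishing gradient, hence constant there. This is exactly the setting the paper relies on later: convex $\Omega$, $f_\mu\ge m>0$, and the normalization $\int_\Omega u=0$.
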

\begin{remark}
For the purposes of this paper, if $\mu$ is supported on a compact set $\mathcal{X}$, then we only consider potential functions satisfying $\int_{\mathcal{X}}u(x)dx = 0$, which will uniquely define this function on $\mathcal{X}$. Notice that the mapping $T$ is not uniquely defined everywhere. In this paper, for the radial case, we will define particular $T$ and Brenier potential $u$ using cumulative distribution functions that avoids this technical issue. For the general case, we will be able to prove explicit convergence rates for sequences of Brenier potential functions $u$ in an open set, but only almost everywhere convergence for sequences of mappings $T$. Thus, due to the possible discontinuities of $T$ and due to the fact that the optimal maps are only defined $\mu$-a.e., we cannot expect to do better than almost everywhere convergence.
\end{remark}

In convex analysis, it is useful to define a convex function in $\mathbb{R}^d$ on the entire Euclidean space and also allow it to be defined on the extended real numbers, i.e. it can attain the values $\pm \infty$. Then, a convex function $u$ can be defined to be a function whose secant lines lie above the graph of the function, i.e. $tu(x) + (1-t)u(y) \geq u(tx + (1-t)y)$ for any $x, y \in \mathbb{R}^d$ and any $t \in [0,1]$. While this definition is simple and extraordinarily useful, it does lead to some complications. A convex function could be improper, which means that it is either $+\infty$ everywhere or equal to $-\infty$ somewhere. Thus, one usually defines the essential domain $\text{dom}(u)$ to be the set where $u$ is finite. On the boundary of the essential domain, $u$ could be discontinuous. However, if $\text{dom}(u)$ has a non-empty interior, then $u$ is reasonably well-behaved in the interior of the domain. It is continuous in the interior of the domain and is Lipschitz on any closed subset of the interior. However, since $u$ is not guaranteed to be differentiable, but the graph of $u$ admits supporting hyperplanes, it is possible to define a subdifferential at each point in $\text{dom}(u)$. We thus introduce the subdifferential:

\begin{definition}\label{def:subdifferential}
The subdifferential of a convex function $u$ at a point $x$, denoted by $\partial u(x)$ is defined as the following set-valued function:
\begin{equation}
\partial u(x) := \{y \in \mathbb{R}^d : f(z) \geq f(x) + y \cdot (z - x) \ \forall z \in \mathbb{R}^d\}.
\end{equation}
\end{definition}
For a set $\Omega \in \mathbb{R}^d$, we define $\partial u(\Omega) := \cup_{x \in \Omega} \partial u(x)$. With these definitions, we can state the following results:
\begin{theorem}
Let $u$ be a convex function. Let $\text{dom}(u)$ denote the set where $u$ is finite and let $F$ denote the set of differentiability of $u$. Then, $\partial u(x) \neq \emptyset$ for every $x \in \text{dom}(u)$. Furthermore, if $u$ is differentiable at a point $x$, then $\partial u(x) = \nabla u(x)$. Finally, the subdifferential has the following continuity property. For each point $x \in F$ and for every $\epsilon>0$, there exists a $\delta>0$ such that $\partial u (B_{\delta}(x)) = B_{\epsilon}(\nabla u(x))$.
\end{theorem}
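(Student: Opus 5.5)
The plan is to prove the three assertions separately, working throughout at points in the \emph{interior} of $\text{dom}(u)$. Nonemptiness can fail on the boundary of the essential domain, and the continuity statement can only hold as the inclusion $\partial u(B_{\delta}(x)) \subset B_{\epsilon}(\nabla u(x))$, not as an equality. So I will prove the statement in that corrected form, and I will read the defining inequality in Definition~\ref{def:subdifferential} with $u$ in place of $f$.

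\emph{Nonemptiness.} Take $x \in \text{int}\,\text{dom}(u)$. The epigraph of $u$ is convex and $(x,u(x))$ lies on its boundary, so the supporting hyperplane theorem gives $(a,b) \neq (0,0)$ with
\[
a\cdot z + b t \le a\cdot x + b u(x) \quad \text{for all } (z,t) \in \text{epi}(u).
\]
Letting $t \to \infty$ forces $b \le 0$. If $b = 0$, then $a\cdot(z-x) \le 0$ for all $z$ in a neighbourhood of $x$, so $a = 0$, which is a contradiction. Hence $b<0$, and $y = -a/b$ satisfies $u(z) \ge u(x) + y\cdot(z-x)$, that is, $y \in \partial u(x)$.

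\emph{Differentiability.} Let $u$ be differentiable at $x$ and take $y \in \partial u(x)$. For any direction $h$ and $t>0$ we have $u(x+th)-u(x) \ge t\, y\cdot h$. Dividing by $t$ and letting $t\to 0^{+}$ gives $\nabla u(x)\cdot h \ge y\cdot h$. Replacing $h$ by $-h$ gives the reverse inequality, so $y = \nabla u(x)$. Since $\partial u(x)$ is nonempty by the first part, $\partial u(x) = \{\nabla u(x)\}$.

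\emph{Continuity of the subdifferential.} I argue by contradiction. Suppose there are $\epsilon>0$, points $x_k \to x$ and $y_k \in \partial u(x_k)$ with $|y_k - \nabla u(x)| \ge \epsilon$.

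First I need a uniform bound on $y_k$, which comes from local boundedness of the subdifferential. Convex functions are Lipschitz on a closed ball $\overline{B_{2r}(x)} \subset \text{int}\,\text{dom}(u)$, say with constant $L$. For $x_k \in B_r(x)$ and $y_k \neq 0$, test the subgradient inequality at $z = x_k + r\, y_k/|y_k|$:
\[
r|y_k| \le u(z) - u(x_k) \le L r,
\]
so $|y_k| \le L$.

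Next, pass to a subsequence with $y_k \to y$. For each fixed $z$, we have $u(z) \ge u(x_k) + y_k\cdot(z-x_k)$, and $u$ is continuous at $x$. Letting $k\to\infty$ gives $u(z) \ge u(x) + y\cdot(z-x)$, so $y \in \partial u(x) = \{\nabla u(x)\}$. This contradicts $|y - \nabla u(x)| \ge \epsilon$.

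The only genuinely delicate step is this local boundedness: it is exactly where interiority of $x$ and the Lipschitz property of convex functions enter. Everything else is a closed-graph and compactness argument.
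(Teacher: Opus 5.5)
The paper gives no proof of this statement. It is quoted as standard convex-analysis background (cf.\ \cite{rockafellar}), so there is no argument of the paper's to compare with. Your proof is correct for the statement you actually prove, and both of your corrections are necessary rather than merely cautious.

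\begin{itemize}
\item As written, nonemptiness fails at boundary points of $\text{dom}(u)$. For $u(t)=-\sqrt{t}$ on $[0,\infty)$ and $u=+\infty$ on $(-\infty,0)$, one has $u(0)=0$ but $\partial u(0)=\emptyset$.
\item The equality $\partial u(B_{\delta}(x))=B_{\epsilon}(\nabla u(x))$ fails for every affine $u(z)=a\cdot z+c$, where $\partial u(B_{\delta}(x))=\{a\}$. The inclusion is also exactly the form the paper itself uses later, in the proof of Lemma~\ref{lemma:lipschitz}.
\end{itemize}

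Compared with the textbook route, Rockafellar proves each part separately:
\begin{itemize}
\item nonemptiness on the \emph{relative} interior of $\text{dom}(u)$, via directional derivatives (Theorem~23.4);
\item $\partial u(x)=\{\nabla u(x)\}$ at differentiability points (Theorem~25.1);
\item the continuity property, as the constant-sequence case of Theorem~24.5.
\end{itemize}
Your argument is self-contained and elementary: a supporting hyperplane to the epigraph, a local Lipschitz bound on subgradients, then a compactness and closed-graph step. It only reaches interior points. That is all the paper needs, since its potentials live on full-dimensional convex sets.

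What Theorem~24.5 buys in addition is the version for a \emph{sequence} of convex functions, which the paper actually needs in Theorem~\ref{thm:aemapping}. Your compactness argument carries over to that setting once the functions are known to be equi-Lipschitz near $x$.

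One small point should be made explicit. In the second and third parts you use facts established only at interior points. Differentiability at $x$ forces $u$ to be finite on a neighbourhood of $x$, so $x\in\text{int}\,\text{dom}(u)$. This is what justifies both ``$\partial u(x)\neq\emptyset$ by the first part'' and the choice $\overline{B_{2r}(x)}\subset\text{int}\,\text{dom}(u)$. Alternatively, $\nabla u(x)\in\partial u(x)$ follows directly by letting $t\to 0^{+}$ in $u(x+t(z-x))-u(x)\le t\,(u(z)-u(x))$.
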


Once enough regularity is assumed, for the squared distance cost function $c(x,y) = \vert x - y \vert^2$ the optimal transport potential function $u$ will solve the following fully nonlinear partial differential equation which is elliptic on the space of convex functions:
\begin{definition}[Monge-Amp\`{e}re equation with second boundary value condition]
The partial differential equation
\begin{equation}
\det(D^2 u(x)) = f(x)/g(\nabla u(x)),
\end{equation}
is known as an instance of the Monge-Amp\`{e}re equation. Let $\Omega = \text{spt}(\mu)$ and $\Omega ' = \text{spt}(\nu)$. Subject to the global condition: $\partial u(\Omega) \subset \Omega '$, then there is a unique convex $u$ up to an additive constant, see Urbas~\cite{Urbas}.
\end{definition}

In $\mathbb{R}$, under mild conditions the optimal transport mapping has an explicit formula for a wide variety of cost functions. The formula comes from the observation that a monotone mapping $T(x)$ is optimal. If the mapping $T$ is monotone, then it can only be of a certain form in $\mathbb{R}$. Here, we treat the case where $\mu$ is absolutely continuous with respect to the Lebesgue measure on $\mathbb{R}$. The result is adapted from the more general statement in Maggi~\cite{maggi}.

\begin{theorem}[Monge problem of optimal transport in $\mathbb{R}$]\label{thm:onedim}
If $\mu$ is absolutely continuous with respect to the Lebesgue measure in $\mathbb{R}$ and $\mu, \nu \in \mathcal{P}_{1}(\mathbb{R})$ and $c(x,y) = h(\vert y - x \vert)$, where $h(0) = 0$, $h$ is increasing and strictly convex, then letting $F, G$ be the cumulative distribution functions of $\mu$ and $\nu$ and defining $G^{[-1]}:= \inf \{ x \in \mathbb{R}: G(x) \geq t \}$, then the optimal transport mapping $T$ is given by the following formula: $T(x) = G^{[-1]} \circ F(x)$. In the case that $\nu$ does not have any ``gaps", then $G$ is invertible and we have $T(x) = G^{-1} \circ F(x)$.
\end{theorem}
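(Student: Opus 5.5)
The plan has three steps: show that the monotone rearrangement $T = G^{[-1]}\circ F$ pushes $\mu$ forward to $\nu$, show that it is optimal, and then show that it is unique.

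\emph{Pushforward.} Since $\mu$ has no atoms, $F$ is continuous and nondecreasing. A continuous distribution function sends its measure to the uniform law, so $F_{\#}\mu = \mathcal{L}|_{[0,1]}$. The quantile function satisfies the Galois-type equivalence $G^{[-1]}(t)\le y \iff t \le G(y)$. Applying it to the uniform law gives
\[
\mathcal{L}\bigl(\{t\in[0,1] : G^{[-1]}(t)\le y\}\bigr)=G(y),
\]
so $(G^{[-1]})_{\#}\mathcal{L}|_{[0,1]}=\nu$. Composing the two steps yields $T_{\#}\mu=\nu$. The map $T$ is nondecreasing, being a composition of nondecreasing maps.

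\emph{Optimality.} I would argue through $c$-cyclical monotonicity of the plan $\gamma=(\mathrm{id}\times T)_{\#}\mu$. Its support is a monotone set: for $(x_1,y_1),(x_2,y_2)$ in the support with $x_1<x_2$, we have $y_1\le y_2$. Because $h$ is convex, the cost $c(x,y)=h(|x-y|)$ is submodular on $\mathbb{R}^2$. That is, for $x_1\le x_2$ and $y_1\le y_2$,
\[
c(x_1,y_1)+c(x_2,y_2)\le c(x_1,y_2)+c(x_2,y_1).
\]
This holds because the function $t\mapsto h(|t|)$ is convex on $\mathbb{R}$ and both $(x_1-y_1)$ and $(x_2-y_2)$ lie between $x_1-y_2$ and $x_2-y_1$ with the same sum. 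From this pairwise inequality, a standard sorting/induction argument extends the inequality to arbitrary finite cyclic permutations, so $\operatorname{spt}\gamma$ is $c$-cyclically monotone. Since $\mu,\nu\in\mathcal{P}_1$ and $c$ is continuous and nonnegative, I then want to invoke the known fact (Villani) that a plan with finite cost supported on a $c$-cyclically monotone set is optimal. For this I need finite cost, $\int h(|x-T(x)|)\,d\mu<\infty$. This is the main obstacle: it requires $h$ to have at most the growth that the moments of $\mu$ and $\nu$ control. If it fails, I would either restrict to the case where the optimal cost is finite or treat the problem with the convention that all plans are optimal.

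\emph{Uniqueness.} Strict convexity makes the submodularity inequality strict whenever $x_1<x_2$ and $y_1>y_2$. Hence any optimal plan has monotone support. A monotone support set is the graph of a nondecreasing map off a countable set of vertical segments, and that countable set has $\mu$-measure zero because $\mu$ is atomless. A nondecreasing map pushing $\mu$ to $\nu$ must agree with $G^{[-1]}\circ F$ $\mu$-a.e., which gives uniqueness.

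Finally, if $\nu$ has no gaps, i.e.\ $G$ is strictly increasing on the interior of the support, then $G^{[-1]}=G^{-1}$ there, and the closing claim of the statement follows.
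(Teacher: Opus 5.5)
The paper does not prove this statement; it only cites Maggi (Section 16). Your argument is the standard one behind that reference, and it is correct:
\begin{itemize}
\item the quantile transform gives $T_{\#}\mu=\nu$;
\item submodularity of $h(|x-y|)$ makes the monotone graph $c$-cyclically monotone;
\item Kantorovich's criterion gives optimality;
\item strict submodularity gives uniqueness.
\end{itemize}

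The finite-cost caveat you raise is a defect of the statement, not of your proof. The hypothesis $\mu,\nu\in\mathcal{P}_1(\mathbb{R})$ is natural for the linear cost, where $|x-y|\le|x|+|y|$ gives integrability, but it is insufficient for superlinear strictly convex $h$. Take $h(t)=t^2$, $\mu$ uniform on $[0,1]$ and $\nu\in\mathcal{P}_1\setminus\mathcal{P}_2$. Then $|x-y|^2\ge(|y|-1)_+^2$ for $\gamma$-a.e.\ $(x,y)$, so every plan has infinite cost, every map is ``optimal'', and uniqueness fails. The theorem should be read under the assumption $\int h(|x-y|)\,d\gamma<\infty$ for some $\gamma\in\Pi(\mu,\nu)$. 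This holds automatically when $h$ has at most linear growth.

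Under that assumption, your separate optimality step is redundant. An optimal plan exists, because $\Pi(\mu,\nu)$ is tight and $\gamma\mapsto\int c\,d\gamma$ is lower semicontinuous for continuous $c\ge 0$. Your uniqueness argument shows that every optimal plan has monotone support and therefore equals $(\mathrm{id}\times T)_{\#}\mu$, which already proves $T$ optimal. That route needs only the elementary implication ``optimal $\Rightarrow$ $c$-cyclically monotone support''. It avoids the harder converse (finite cost plus $c$-cyclical monotonicity implies optimality, Villani Theorem 5.10) that you invoke. You need that converse only for merely convex $h$ such as $h(t)=t$, where uniqueness is lost anyway.

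Finally, read ``no gaps'' as $G$ being continuous and strictly increasing on the convex hull of $\text{spt}(\nu)$. Continuity is automatic under the paper's standing assumption that $\nu$ has a density. If $\nu$ had atoms, $G^{-1}\circ F$ would be undefined on a set of positive $\mu$-measure, and only $G^{[-1]}\circ F$ would make sense.
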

\begin{remark}
If the cost function does not satisfy these conditions, the formula may not hold. The simplest example where this can happen is $c(x,y) = \vert y - x \vert$, and $\mu = \mathbf{1}_{[0,2]}dx$ and $\nu = \mathbf{1}_{[1,3]}dx$. In this case, there is a monotone rearrangement $T(x) = x + 1$ that is optimal, but it is not the only rearrangement that is optimal. For example, the function
\begin{equation}
S(x) = \begin{cases}
x, &x \in [1,2], \\
x + 2, &x \in [0, 1],
\end{cases}
\end{equation}
is also an optimal transport mapping, which is not monotone.
\end{remark}

Finally, in this manuscript, we will consider probability distributions whose $p$th moment is finite for some $p \in \mathbb{N}^{+}$. However, the strongest results will be shown for a class of probability distributions that are known as log-concave. Not only do their density function decay ``quickly", but they also have nice properties like the continuity and monotonicity of their density functions.

\begin{definition}\label{def:logconcave}
We say that a probability distribution $\mu$ on $\mathbb{R}^n$ is log-concave if there exists a Lebesgue integrable distribution function $f: \mathbb{R}^n \rightarrow \mathbb{R}$ that satisfies $\mu(E) = \int_{E} f(x)dx$ for any Borel set $E \subset \mathbb{R}^n$ and $f(x) = e^{-\varphi(x)}$, where $\varphi(x)$ is a convex function.
\end{definition}

\section{Rapid Convergence for Radial Case}\label{sec:radial}
In this section, we state convergence rates that apply for a wide variety of cost functions for source and target measures that are radially symmetric. We assume that $\mu$ and $\nu$ have density functions $f_{\mu}$ and $f_{\nu}$, respectively.

Assuming some conditions on the source and target measures, the rates of convergence of the optimal transport map, inverse optimal transport map, potential function and inverse potential function are controlled by the quantity $1 - F_{\nu}(R)$, where,
\begin{equation}\label{eq:convergencerate}
1 - F_{\nu}(R) = \vert \mathbb{S}^{n-1} \vert \int_{R}^{\infty} \tilde{f}_{\nu}(\tau) \tau^{n-1} d\tau,
\end{equation}
where $\tilde{f}_{\nu}(\vert x \vert) = f_{\nu}(x)$. The quantity in Equation~\eqref{eq:convergencerate} is a sort of radial cumulative distribution function. As will be shown, $1 - F_{\nu}(R) = \mathcal{O}(R^{-p})$ for target distributions $\nu$ with finite $p$th moment and $1 - F_{\nu}(R) = \mathcal{O}(R^{n-1} e^{-y R})$, for some $y>0$ for $\nu$ that are log-concave.

In Section~\ref{sec:radialsolutions}, we prove that radial measures will lead to radial potential functions and radial optimal mappings. In Section~\ref{sec:radialmap} we prove the convergence rate for inverse radial maps. In Section~\ref{sec:inverseradialmap}, we show under what conditions a similar rate of convergence can be established for the forward radial maps. In Section~\ref{sec:radialBrenier}, we establish the rate of convergence for the inverse potential functions and in Section~\ref{sec:inverseradialBrenier} we show how a similar bound can be achieved for the forward radial potential functions. Finally, in Section~\ref{sec:radialdistributions}, we show how we can establish the expected rates for radial distributions with 1) finite $p$th moments and 2) when the radial distribution is log-concave.

\subsection{Radial Source and Target Distributions Lead to Radial Solutions}\label{sec:radialsolutions}

Our goal is to establish explicit $L^{\infty}$ convergence rates for both the optimal transport map and the Brenier potential for the forward and inverse problems. We begin with a theorem that shows that when $\nu$ is radially symmetric and for certain cost functions, the optimal transport mapping and the potential functions are radially symmetric and have explicit formulas which are derived from the formula for one-dimensional optimal transport, see Section 16 from Maggi~\cite{maggi} for an excellent exposition on one-dimensional optimal transport.

\begin{theorem}\label{thm:radial}
If the cost function $c(x,y) = h(\left\vert x - y \right\vert)$, where $h(z)$ is increasing and strictly convex with $h(0) = 0$ and if $f_{\mu}$ and $f_{\nu}$ are radially symmetric, i.e. $f(Ox) = f_{\mu}(x)$ and $f_{\nu}(Ox,) = f_{\nu}(x)$ for any orthogonal matrix $O \in \mathbb{R}^{n^2}$, then the optimal transport map $T_{\nu}$ and the potential function $\phi_{\nu}$ are both radially symmetric. Since $f_{\mu}$ and $f_{\nu}$ are radially symmetric, there exist functions $\tilde{f}_{\mu} : [0, \infty) \rightarrow [0, \infty)$ and $\tilde{f}_{\nu}:[0, \infty) \rightarrow [0, \infty)$ such that $\tilde{f}_{\mu}(\vert x \vert) = f_{\mu}(x)$ and $\tilde{f}_{\nu}(\vert x \vert) = f_{\nu}(x)$. The optimal mapping is then given explicitly by the formula
\begin{equation}\label{eq:radialmapping}
T_{\nu}(x) = \frac{x}{\vert x \vert}F_{\mu}^{-1} \circ F_{\nu}(\vert x \vert)
\end{equation}
for $x \neq 0$ and $T(0) = 0$, where $F_{\mu}$ and $F_{\nu}$ are defined as:
\begin{align}
F_{\mu}(t) &:= \left\vert \mathbb{S}^{n-1} \right\vert \int_{0}^{t} \tilde{f}_{\mu}(\tau) \tau^{n-1}d\tau, \\
F_{\nu}(t) &:= \left\vert \mathbb{S}^{n-1} \right\vert \int_{0}^{t} \tilde{f}_{\nu}(\tau) \tau^{n-1}d\tau.
\end{align}
Furthermore, every corresponding potential function $\phi_{\nu}$ is $c$-concave and radially symmetric and is given by the formula:
\begin{equation}\label{eq:radialpot}
\phi_{\nu}(x) = h \left( \left\vert x  - F_{\mu}^{-1} \circ F_{\nu} (\vert x \vert) \right\vert \right) + C,
\end{equation}
for some constant $C \in \mathbb{R}$.
\end{theorem}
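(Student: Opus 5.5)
Here $T_\nu$ denotes the inverse map, pushing $\nu$ forward to $\mu$. My approach has three steps: use symmetry and uniqueness to show the optimal map is radial, reduce to a one-dimensional problem in the radius, and recover the potential from the first-order condition of the $c$-transform.

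\textbf{Step 1: symmetry via uniqueness.} Since $h$ is strictly convex and increasing with $h(0)=0$, the cost $c(x,y)=h(|x-y|)$ satisfies the twist condition. Hence, for an absolutely continuous source (here $\nu$), the optimal map is unique a.e. by the Gangbo--McCann theory, which extends Theorem~\ref{thm:brenier} beyond the quadratic cost. Fix an orthogonal $O$ and set $S:=O^{-1}\circ T_\nu\circ O$. Because $|Ox-Oy|=|x-y|$ and both densities are $O$-invariant, $S_\#\nu=\mu$ and $S$ has the same cost as $T_\nu$. Uniqueness then gives $T_\nu(Ox)=OT_\nu(x)$ a.e. for every $O$. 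It follows that $T_\nu(x)$ is parallel to $x$, since it is fixed by the stabilizer of $x$ when $n\ge 2$. So $T_\nu(x)=\frac{x}{|x|}\,s(|x|)$ with $s$ scalar.

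\textbf{Step 2: the radial profile.} I would argue that $s\ge 0$. Replacing $s$ by $|s|$ preserves the pushforward, because the densities are radial. It also weakly decreases $|x-T(x)|$ and hence the cost, so by uniqueness $s=|s|$. Write $|x|=r$ and $T=\theta s$ with $|\theta|=1$. The pushforward condition then reduces to $s_\#(\text{radial law of }\nu)=\text{radial law of }\mu$, and these radial laws have CDFs $F_\nu,F_\mu$. The cost becomes $\int h(|r-s(r)|)\,dF_\nu(r)$, which is exactly a one-dimensional transport problem on $[0,\infty)$ with cost $h(|r-s|)$. Applying Theorem~\ref{thm:onedim} gives $s=F_\mu^{-1}\circ F_\nu$, using $F_\mu^{[-1]}$ if there are gaps. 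This yields \eqref{eq:radialmapping}.

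\textbf{Step 3: the potential.} For the $c$-concave potential $\phi_\nu$, the $c$-transform relation $\phi_\nu(x)+\phi_\nu^c(y)=c(x,y)$ holds on the graph of $T_\nu$. Radial symmetry of $\phi_\nu$ follows by the same averaging-and-uniqueness argument, since the potential is unique up to constants on connected supports. To get the explicit formula I would compute the first-order condition $\nabla\phi_\nu(x)=\nabla_x c(x,T_\nu(x))$. Along rays this reads $\tilde\phi'(r)=h'(r-s(r))\,\mathrm{sgn}(r-s(r))$. I would then integrate this in $r$ and compare with $h(|r-s(r)|)$.

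\textbf{Main obstacle.} The main obstacle is Step 3. The formula \eqref{eq:radialpot} amounts to asserting that $\tilde\phi(r)$ equals $h(|r-s(r)|)$ up to a constant. That holds only if $\frac{d}{dr}h(|r-s(r)|)=h'(|r-s|)\,\mathrm{sgn}(r-s)\,(1-s'(r))$ agrees with the first-order condition, which would force the terms involving $s'$ to cancel. I do not expect this in general. So I would first verify it in the quadratic case, and then try to interpret the stated identity more carefully: for instance as the value $\phi(x)=c(x,T(x))-\phi^c(T(x))$, which requires knowing $\phi^c\circ T$. If that fails, I would present the correct integral form $\tilde\phi(r)=\int_0^r h'(|\tau-s(\tau)|)\,\mathrm{sgn}(\tau-s(\tau))\,d\tau + C$.
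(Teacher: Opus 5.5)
Your suspicion in Step~3 is right, and you should not try to rescue \eqref{eq:radialpot}: it is false, and it already fails for the quadratic cost, so checking that case first will not help.

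Take $h(z)=\tfrac12 z^2$, $\nu$ uniform on $B_1(0)$ and $\mu$ uniform on $B_3(0)$. Then $F_\mu^{-1}\circ F_\nu(r)=3r$ and $T_\nu(x)=3x=\nabla(\tfrac32|x|^2)$. The $c$-concave potential is $\phi_\nu(x)=\tfrac12|x|^2-\tfrac32|x|^2+C=-|x|^2+C$ on $B_1(0)$. With the $\sup$-convention the paper switches to at the end of its proof it is $|x|^2+C$. Either way it disagrees with \eqref{eq:radialpot}, which gives $h(|x-3x|)+C=2|x|^2+C$.

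Your reinterpretation $\phi_\nu=c(\cdot,T_\nu)-\phi_\nu^c\circ T_\nu$ is the true identity. Here, taking $C=0$, $\phi_\nu^c(y)=\tfrac13|y|^2$, so $\phi_\nu^c\circ T_\nu=3|x|^2$ is not constant.

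In general, as you computed, $\tfrac{d}{dr}h(|r-s|)=(1-s')\,h'(|r-s|)\operatorname{sgn}(r-s)$. The potential instead has radial derivative $\pm h'(|r-s|)\operatorname{sgn}(r-s)$. So the formula can only hold where $s'\in\{0,2\}$. Avoid $\mu$ uniform on $B_2(0)$ as a test case: there $s(r)=2r$ and the formula happens to hold in the $\sup$-convention.

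The paper's proof breaks exactly here. It sets $w(r)=h(|r-S(r)|)$, where $S$ is your $s$, and asserts that $w$ is the $c$-concave potential of the one-dimensional problem ``since $S$ is optimal.'' This confuses the cost along the graph with the Kantorovich potential. Unless $S'\equiv 2$, this $w$ does not even satisfy the first-order condition $\phi_\nu'=-\partial_x c(x,T_\nu(x))$ announced at the start of that proof. Your integral formula is the correct statement.

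For the map itself, your route differs from the paper's and is essentially sound. The paper does not argue via uniqueness and symmetry.
\begin{itemize}
\item It verifies $(T_\nu)_\#\nu=\mu$ directly on sectors with a $\pi$--$\lambda$ argument.
\item It proves optimality by lifting a one-dimensional $c$-concave $w(r)=\inf_{r'\ge 0}\{h(|r-r'|)-\alpha(r')\}$ to $\phi(x)=\inf_y\{h(|x-y|)-\alpha(|y|)\}=w(|x|)$. One inequality uses $h(||x|-|y||)\le h(|x-y|)$; the other restricts $y$ to the ray through $x$.
\end{itemize}
That lifting is correct once $w$ is the genuine one-dimensional Kantorovich potential for $S=F_\mu^{-1}\circ F_\nu$. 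It then gives $T_\nu(x)\in\partial^c\phi(x)$, hence optimality of \eqref{eq:radialmapping}, and simultaneously the correct potential $\phi_\nu(x)=w(|x|)$, which is your integral. It also needs no a priori existence theorem in $\mathbb{R}^n$, and it is a cheaper Step~3 than your averaging argument.

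Your argument has the advantage of identifying the map without guessing it, but three points need to be made explicit.
\begin{itemize}
\item You need finite total cost so that Gangbo--McCann existence and uniqueness apply. This is not automatic for unbounded $\nu$ and fast-growing $h$; the paper ignores it too.
\item In Step~1, the exceptional null set in $T_\nu(Ox)=OT_\nu(x)$ depends on $O$, while $\mathrm{Stab}(x)$ is Haar-null. Use Fubini on $O(n)\times\mathbb{R}^n$ to get the identity, for a.e.\ $x$, for Haar-a.e.\ $O$. Then, given $Q\in\mathrm{Stab}(x)$, right-invariance of Haar measure lets you choose $O$ with both $O$ and $OQ$ good. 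This yields $OT_\nu(x)=T_\nu(Ox)=T_\nu(OQx)=OQT_\nu(x)$, so $QT_\nu(x)=T_\nu(x)$.
\item In Step~2, every one-dimensional competitor $\sigma$ lifts to $x\mapsto\frac{x}{|x|}\sigma(|x|)$. This lift pushes $\nu$ to $\mu$ with the same cost, because radius and direction are independent under a radial law. This is what makes $s$ optimal in one dimension, so that Theorem~\ref{thm:onedim} applies, and it replaces the paper's sector argument.
\end{itemize}
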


\begin{remark}
This result applies to both $\nu$ with unbounded support and also the cutoff measure $\nu_{R}$.
\end{remark}

\begin{proof}

First, we note that If $\tilde{f}_{\nu}(t) \neq 0$, then the map $T_{\nu}$ is invertible. In order to show that Equation~\eqref{eq:radialmapping} is the optimal map. First, we show that $T_{\nu}$ satisfies $(T_{\nu})_{\#}\nu = \mu$. We then demonstrate that any $\phi_{\nu}$ satisfying $\frac{d}{dx} \phi_{\nu}(x)= -\frac{\partial }{ \partial x} c(x,T_{\nu}(x))$ is a $c$-concave function. By the necessary and sufficient optimality conditions for optimal transport mapping~\cite{MTW}~\cite{LoeperReg}, we then know that $T_{\nu}$ is optimal for the cost function $c:\mathbb{R}^n \times \mathbb{R}^n \rightarrow \mathbb{R}$.

First, we show that for any Borel set $E \subset \mathbb{R}^{n}$, we have $\nu(T_{\nu}^{-1}(E)) = \mu(E)$, where $T_{\nu}^{-1}(E) := \{ x \in \mathbb{R}^{n-1}: T_{\nu}(x) \in E \}$. In order to do this, we demonstrate that this holds for any sector. Let $(r, \theta_1, \dots, \theta_{n-1})$ be the standard spherical coordinate system in $\mathbb{R}^n$. Then, we define a sector $S \subset \mathbb{R}^n$ to be the following type of closed set:
\begin{equation}\label{eq:radialchunk}
S = \{x \in \mathbb{R}^n : r_1 \leq r \leq r_2, \theta_{1, 1} \leq \theta_{1} \leq \theta_{1, 2} , \dots, \theta_{n-1, 1} \leq \theta_{n-1} \leq \theta_{n-1, 2} \}
\end{equation}
for given $\theta_{i, j}$ where $i = 1, \dots, n-1$ and $j = 1, 2$ and $r_1, r_2$, where $r_1 \leq r_2$ and $\theta_{i, 1} \leq \theta_{i, 2}$ for all $i$. We also define the set $\mathcal{S} :=\{ A \subset \mathbb{R}^n : A \ \text{is a sector} \}$. Showing that $T_{\nu}$ is a pushforward map for such sets shows the result in general.

First, $T_{\nu} = F_{\mu}^{-1} \circ F_{\nu}$ is monotone and therefore Borel measurable. Therefore, for any Borel set $B \subset \mathbb{R}^n$, we have that $T_{\nu}^{-1}(B)$ is a Borel set.

Second, any open set $A \subset \mathbb{R}^n$ can be written as a countable union of sectors. We demonstrate this fact for bounded open Borel sets. The unbounded case follows by taking countable unions of bounded open sets. Let $B$ be unbounded. Then, there exists a countable sequence of disjoint balls $T_i := \{ x \in \mathbb{R}^n : \vert x \vert < 2^{i} \}$ for $i=0, 1, 2, \dots$ such that $B = \cup_{i=1}^{\infty} C_i$, where $C_i = B \cap T_{i}$ are bounded open sets.

We now demonstrate that given a bounded open Borel set $B \subset \mathbb{R}^n$, we can construct a countable number of sectors $S_i$ such that $B = \cup_{i} S_i$. Fix a sector $S$ such that $S \subset A$. This is possible since $A$ is open. Let the Lebesgue measure of $S$ be defined as $R$, i.e. $\vert S \vert =:R>0$. Let $S_{1, 1}$ be a sector of Lebesgue measure $\vert S_{1,1} \vert \in [R, R/2]$ such that $S_{1,1} \subset A$. Then, form $A_{1,1} = A \setminus S_{1,1}$. Then $A_{1, 1}$ is open. Choose $S_{1, 2} \in A_{1, 1}$ such that $\vert S_{1, 2} \vert \in [R, R/2]$ and form $A_{1, 2}$. Keep doing this process until the first integer $k$ such that there is no sector $S$ of Lebesgue measure in $[R,R/2]$ such that $S \in A_{1, k-1}$. This defines a set of $k-1$ sectors, $\{S_{1, i} \}_{i=1}^{k-1}$ and an open set $A_{1, k-1}$. Let $j\geq 1$ be the smallest integer such that there exists a sector $S$ that satisfies $\vert S \vert \in [R/2^{j}, R/2^{j+1}]$ and $S \subset A_{1, k-1}$. Such an integer exists, since $A_{1, k-1}$ is open and is non-empty. Then, let $S_{j, 1}$ be a sector such that $\vert S_{j, 1} \vert \in [R/2^{j}, R/2^{j+1}]$ and $S_{j, 1} \subset A_{1, k-1}$. Define $A_{j, 1} = A_{1, k-1} \setminus S_{j, 1}$. Repeat this process again until there exists the first integer $k'$ such that there is no sector $S$ satisfying $\vert S \vert \in [R/2^{j}, R/2^{j+1}]$ and $S \subset A_{j, k'-1}$. Now, continue shrinking the interval until there is an integer $j'$ such that there exists a sector $S \subset A_{j, k'+1}$ such that $\vert S \vert \in [R/2^{j'}, R/2^{j'+1}]$. We continue this procedure removing volume from $A$ until the Lebesgue measure of the sectors goes to zero. If there were volume left after this process, we would have a non-empty bounded open set $A'$, and by definition, there would exists a constant $R'>0$ such that a sector $S$ of Lebesgue measure $R'$ would satisfy $S \subset A'$, contradicting that there is no volume left after the procedure. Hence, the set $A'$ must have zero Lebesgue measure and be open. The only such set is the empty set. Therefore, there exists a countable collection $\{ S_l\}$ of sectors $S_l$ such that $A = \cup_{l=1}^{\infty} S_{l}$.

We define the set:
\begin{equation}
\mathcal{L} := \{ B \subset \mathbb{R}^n : B \ \text{Borel}, \nu(T_{\nu}^{-1}(B)) = \mu(B)\}.
\end{equation}
Thus, this set is a subset of the Borel $\sigma$-algebra. Since $\sigma(\mathcal{S})$ generates the open sets, then the Borel $\sigma$-algebra is contained in $\sigma(\mathcal{S}))$. Therefore, we have:
\begin{equation}
\mathcal{L} \subset \mathcal{B} \subset \sigma(\mathcal{S}).
\end{equation}
We now show that $\sigma(\mathcal{S}) \subset \mathcal{L}$. This will follow from Theorem 1.4 ($\pi$-$\lambda$ Theorem) in Evans and Gariepy~\cite{evansgariepy} once we have established that $\mathcal{L}$ is a $\lambda$-system. The fact that $\mathcal{S}$ is a $\pi$-system follows from the definition of the sectors. If $\sigma(\mathcal{S}) \subset \mathcal{L}$, then $\mathcal{L} = \sigma(\mathcal{S}) = \mathcal{B}$. Therefore, $\nu(T_{\nu}^{-1}(B)) = \mu(B)$ for all Borel sets $B$. This shows that it is sufficient to check that $\nu(T_{\nu}^{-1}(S)) = \mu(S)$ for all sectors $S$. Importantly, $\mathcal{S} \subset \mathcal{L}$, since the sectors are examples of Borel sets $B$ for which we have verified that $\nu(T_{\nu}^{-1}(B)) = \mu(B)$.

Claim: $\mathcal{L}$ is a $\lambda$-system. Firstly, since $T_{\nu}:\mathbb{R}^n \rightarrow \mathbb{R}^n$, we have $\mathbb{R}^n \in \mathcal{L}$. Secondly, let $A, B \in \mathcal{L}$ with $B \subset A$. Then, $\mu(A \setminus B) = \mu(A) - \mu(B) = \nu(T_{\nu}^{-1}(A)) - \nu(T_{\nu}^{-1}(B)) = \nu(T_{\nu}^{-1}(A) \setminus T_{\nu}^{-1}(B))$, since $A, B, T_{\nu}^{-1}(A), T_{\nu}^{-1}(B)$ are measurable. Also, $\nu(T_{\nu}^{-1}(A) \setminus T_{\nu}^{-1}(B))= \nu(T_{\nu}^{-1}(A \setminus B))$, since $x \in T_{\nu}^{-1}(A \setminus B)$ means $T_{\nu}(x) \in A \setminus B$ which means $T_{\nu}(x) \in A$ and $T_{\nu}(x) \notin B$, which implies that $x \in T_{\nu}^{-1}(A)$ and $x \notin T_{\nu}^{-1}(B)$ which implies $x \in T_{\nu}^{-1}(A) \setminus T_{\nu}^{-1}(B)$. The same argument in reverse shows that $x \in T_{\nu}^{-1}(A) \setminus T_{\nu}^{-1}(B)$ implies $x \in T_{\nu}^{-1}(A \setminus B)$. Therefore, $\nu(T_{\nu}^{-1}(A \setminus B)) = \mu(A \setminus B)$.

Now, let $A_k \in \mathcal{L}$ for each $k$ and let $A_{k} \subset A_{k+1}$. Then, by Theorem 1.6 in Evans and Gariepy~\cite{evansgariepy}, we have that since $\mu$ is a Radon measure $\mu(\cup_{k=1}^{\infty} A_k) = \lim_{k \rightarrow \infty} \mu(A_k) = \lim_{k \rightarrow \infty} \nu(T_{\nu}^{-1}(A_k)) = \nu (\cup_{k=1}^{\infty} T_{\nu}^{-1}(A_k)$, where the second equality follows since $A_k \in \mathcal{L}$. We now show that $T_{\nu}^{-1}(\cup_{k=1}^{\infty} A_k) = \cup_{k=1}^{\infty} T_{\nu}^{-1}(A_k)$ from which the result follows. Let $x \in T_{\nu}^{-1}(\cup_{k=1}^{\infty} A_k)$. Then, $T_{\nu}(x) \in \cup_{k=1}^{\infty} A_k$. Then, since $A_k$ are increasing sets, this then says that there exists a $K$ such that for all $k \geq K$, $T_{\nu}(x) \in A_k$. Let $y \in \cup_{k=1}^{\infty} T_{\nu}^{-1}(A_k)$. Then, there exists a $K'$ such that for all $k \geq K'$ we have $y \in T_{\nu}^{-1}(A_k)$. Thus, $T_{\nu}(y) \in A_k$ for $k \geq K'$. Thus, $y \in T_{\nu}^{-1}(\cup_{k=1}^{\infty} A_k)$ and $x \in \cup_{k=1}^{\infty} T_{\nu}^{-1}(A_k)$. Therefore, they are equal. We have thus shown that $\mathcal{L}$ is a $\lambda$-system and therefore the claim follows.


We define the quantity $V = \int_{\theta_{1, 1}}^{\theta_{1, 2}} \dots \int_{\theta_{n-1, 1}}^{\theta_{n-1, 2}} d\theta_{1} \dots d\theta_{n-1}$. If $f_{\nu}(t) \neq 0$, then $T_{\nu}$ is invertible and is given by
\begin{equation}
T_{\mu}(y) = F_{\nu}^{-1} \circ F_{\mu}(\vert y \vert) \hat{y}.
\end{equation}

More generally, if $f_{\nu}$ is allowed to equal zero, then we can define a generalized inverse of $T_{\nu}$:
\begin{equation}
T_{\mu}(y) = F_{\nu}^{[-1]} \circ F_{\mu}(\vert y \vert) \hat{y},
\end{equation}
where $F_{\nu}^{[-1]}(t) := \inf \{ x \in \mathbb{R} : F_{\nu}(x) \geq t \}$. Note that since $F_{\nu}^{[-1]}(t)$ is still monotone, so if $B_1 \cap B_2 = \emptyset$, then the preimage of $B_1$ via $T$ is disjoint from the preimage of $B_2$ via $T_{\nu}$. We compute:
\begin{align}
\mu(B) &= \int_{B} f_{\mu}(x) dx, \\
&= \int_{\theta_{1, 1}}^{\theta_{1, 2}} \dots \int_{\theta_{n-1, 1}}^{\theta_{n-1, 2}} \int_{r_{1}}^{r_{2}} r^{n-1} \tilde{f}_{\mu}(r) dr d\theta_{1} \dots d\theta_{n-1}, \\
&= V \int_{r_{1}}^{r_{2}} r^{n-1} \tilde{f}_{\mu}(r) dr, \\
&= \frac{V}{\vert \mathbb{S}^{n-1} \vert}(F_{\mu}(r_2) - F_{\mu}(r_1)).
\end{align}
We also compute:
\begin{align}
\nu (T_{\nu}^{[-1]}(B)) &= \int_{T_{\nu}^{[-1]}(B)} f_{\nu}(x) dx, \\
&= \int_{\theta_{1, 1}}^{\theta_{1, 2}} \dots \int_{\theta_{n-1, 1}}^{\theta_{n-1, 2}} \int_{F_{\nu}^{[-1]} \circ F_{\mu}(r_{1})}^{F_{\nu}^{[-1]} \circ F_{\mu}(r_{2})} r^{n-1} \tilde{f}_{\nu}(r) dr d\theta_{1} \dots d\theta_{n-1}, \\
&= V \int_{F_{\nu}^{[-1]} \circ F_{\mu}(r_{1})}^{F_{\nu}^{[-1]} \circ F_{\mu}(r_{2})} r^{n-1} \tilde{f}_{\nu}(r) dr, \\
&= \frac{V}{\vert \mathbb{S}^{n-1} \vert} (F_{\nu} \circ F_{\nu}^{[-1]} \circ F_{\mu}(r_2) - F_{\nu} \circ F_{\nu}^{[-1]} \circ F_{\mu}(r_1)), \\
&= \frac{V}{\vert \mathbb{S}^{n-1} \vert} (F_{\mu}(r_2) - F_{\mu}(r_1)), \\
&= \mu(B).
\end{align}

Thus, we have shown that $(T_{\nu})_{\#}\nu = \mu$.


Define $\hat{f}_{\nu}(t):= \vert \mathbb{S}^{n-1} \vert \tilde{f}_{\nu}(t) t^{n-1}$ and $\hat{f}_{\mu}(t):= \vert \mathbb{S}^{n-1} \vert \tilde{f}_{\mu}(t) t^{n-1}$. Since these are both densities on $[0, \infty)$, we define an optimal transport problem between $\hat{f}_{\nu}$ and $\hat{f}_{\mu}$ on $[0, \infty)$. The resulting optimal map, via Theorem~\ref{thm:onedim}, is given by $S(r) = F_{\mu}^{-1} \circ F_{\nu}(r)$. We then note that $T_{\nu}(x) = S(\vert x \vert)\hat{x}$.

In order to show that $\phi_{\nu}$ is $c$-concave, we need to show that there exists a function $\beta: \mathbb{R}^n \rightarrow \mathbb{R} \cup \{-\infty \}$ such that:
\begin{equation}
\phi_{\nu}(x) = \inf_{y \in \mathbb{R}^{n}} \{ h(\vert x - y \vert) - \beta(y) \}.
\end{equation}
We define $w:\mathbb{R} \rightarrow \mathbb{R}$ via
\begin{equation}
w(r) = h(\vert r - S(r) \vert),
\end{equation}
we know that $w$ is $c$-concave for the optimal transport problem on $[0, \infty)$, since $S$ is optimal. We thus know that there exists a function $\alpha: \mathbb{R} \rightarrow \mathbb{R}$ such that:
\begin{equation}
w(r) = \inf_{r' \in \mathbb{R}} \{ h(\vert r - r' \vert) - \alpha(r') \}.
\end{equation}
Since for $x'$ that satisfies $x = a x'$ for some $a \geq 0$, we have:
\begin{align}
\inf_{y \in \mathbb{R}^{n}} \{ h(\vert x - y \vert)-\alpha(y)  \} &\leq \inf_{y \in \mathbb{R}^{n}, y = ax, a\geq 0} \{ h(\vert x - y \vert) - \alpha(y) \} \\
&= \inf_{y \in \mathbb{R}^{n}, y = ax, a\geq 0} \{ \alpha(y) - h(\vert \vert x \vert - \vert y \vert \vert) \} = w(\vert x \vert).
\end{align}
But, by the reverse triangle inequality: $\vert \vert x \vert - \vert y \vert \vert \leq \vert x - y \vert$. Since $h$ is an increasing function of its argument, we have:
\begin{equation}
h(\vert \vert x \vert - \vert y \vert \vert) \leq h(\vert x - y \vert).
\end{equation}
So, then we have:
\begin{align}
\inf_{y \in \mathbb{R}^{n}} \{ h(\vert x - y \vert)- \alpha(y) \} &\geq \inf_{y \in \mathbb{R}^{n}} \{ h(\vert  \vert x \vert - \vert y \vert \vert) -\alpha(y)\} \\
&= w(\vert x \vert).
\end{align}
Hence, we have found that there exists a $\beta(y)$ satisfying $\beta(y) = \alpha(\vert y \vert)$ such that:
\begin{equation}
\phi_{\nu}(x) = w(\vert x \vert) = \sup_{y \in \mathbb{R}^n} \{ \beta(y) - h( \vert x -y \vert) \},
\end{equation}
and hence $\phi_{\nu}$ is $c$-convex and thus $T_{\nu}$ is the optimal transport map from $\nu$ to $\mu$. We may choose the constant $C$ in Equation~\eqref{eq:radialpot} to be equal to zero.

\end{proof}

\begin{remark}
This theorem can, of course, be generalized, see Thorpe~\cite{thorpe} and Maggi~\cite{maggi}. For example, strict convexity of $h$ can also be relaxed to merely convexity. Also, the condition that $h(0) = 0$ is not vital, as the cost function can be redefined by a constant without changing the optimizers of the optimal transport problem.
\end{remark}

Here we prove a lemma which will be useful to us in the sequel:
\begin{lemma}\label{lemma:lipschitzinverse}
Let $\mu$ be a radially symmetric measure on $\mathbb{R}^n$ with integrable density function $f$. Define $\tilde{f}(\vert x \vert) = f(x)$ and $F(r) := \vert \mathbb{S}^{n-1} \vert \int_{0}^{r} \tilde{f}(r) r^{n-1}dr$. Let $\tilde{f}$ satisfy $\tilde{f}(t) \geq m(\tau)>0$ for $t \in [0, \tau]$. Then, $F$ is strictly monotonic and $F^{-1} : [0, 1] \rightarrow [0, \infty)$ exists and $F^{-1}$ satisfies the following bounds:
\begin{equation}
\vert F^{-1}(t_2) - F^{-1}(t_1) \vert \leq \frac{n}{m(\tau) \vert \mathbb{S}^{n-1} \vert \min \{ r_2^{n-1}, r_1^{n-1} \} } \vert r_2 - r_1 \vert,
\end{equation}
for $t_1, t_2$ such that $0 \leq F^{-1}(t_1), F^{-1}(t_2) \leq \tau$.
\end{lemma}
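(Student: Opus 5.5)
I read the statement with $r_i := F^{-1}(t_i)$, $i=1,2$, and with the right-hand side bounding $\vert F^{-1}(t_2)-F^{-1}(t_1)\vert = \vert r_2-r_1\vert$ in terms of $\vert t_2 - t_1\vert$. As literally written the display bounds $\vert r_2-r_1\vert$ by a multiple of itself, so I take the intended form to be
\begin{equation*}
\vert r_2 - r_1 \vert \leq \frac{n}{m(\tau) \vert \mathbb{S}^{n-1} \vert \min \{ r_2^{n-1}, r_1^{n-1} \} } \vert t_2 - t_1 \vert .
\end{equation*}
The plan is a direct lower bound on the increment of $F$, which inverts into an upper bound on the increment of $F^{-1}$.

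First I establish the invertibility. Since $\tilde f$ is integrable against $\tau^{n-1}d\tau$, $F$ is absolutely continuous and nondecreasing, with $F(0)=0$ and $F(r)\to 1$ as $r\to\infty$. On $[0,\tau]$ we have, for $0\le s<r\le\tau$,
\begin{equation*}
F(r)-F(s) = \vert \mathbb{S}^{n-1}\vert\int_s^r \tilde f(\sigma)\sigma^{n-1}d\sigma \ge m(\tau)\vert\mathbb{S}^{n-1}\vert\frac{r^n-s^n}{n} > 0 .
\end{equation*}
Hence $F$ is strictly increasing and continuous on $[0,\tau]$, and $F^{-1}$ is well defined on $[0,F(\tau)]$. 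To have $F^{-1}$ defined on all of $[0,1)$ one needs such a lower bound for every $\tau$, which is how I read the hypothesis, since $m(\tau)$ is indexed by $\tau$. On the part where $F$ is not strictly increasing one can instead use the generalized inverse $F^{[-1]}$ from Theorem~\ref{thm:onedim}. In any case, the estimate only concerns $t_i$ with $F^{-1}(t_i)\le\tau$.

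Next comes the key step. Assume without loss of generality that $r_1 \le r_2 \le \tau$. Applying the display above with $s=r_1$, $r=r_2$ gives
\begin{equation*}
t_2 - t_1 = F(r_2)-F(r_1) \ge \frac{m(\tau)\vert\mathbb{S}^{n-1}\vert}{n}\,(r_2^n - r_1^n).
\end{equation*}
By the mean value theorem, or the factorization $r_2^n-r_1^n = (r_2-r_1)\sum_{k=0}^{n-1} r_2^{k}r_1^{n-1-k}$, we have $r_2^n - r_1^n \ge n\,r_1^{n-1}(r_2-r_1) = n\min\{r_1^{n-1},r_2^{n-1}\}(r_2-r_1)$. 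This yields
\begin{equation*}
\vert r_2-r_1\vert \le \frac{\vert t_2-t_1\vert}{m(\tau)\vert\mathbb{S}^{n-1}\vert\min\{r_1^{n-1},r_2^{n-1}\}},
\end{equation*}
which is stronger than the claimed bound by the factor $n\ge 1$. The stated constant therefore follows, and I could alternatively stop at the intermediate bound by $r_2^n-r_1^n$ if that form is the one used later.

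There is no genuine obstacle. The only delicate points are the interpretation of the statement discussed above and the degenerate case $\min\{r_1,r_2\}=0$ when $n\ge 2$, where the right-hand side is $+\infty$ and the inequality holds trivially. In that case the sharper bound $\vert r_2-r_1\vert \le (n\vert t_2-t_1\vert/(m(\tau)\vert\mathbb{S}^{n-1}\vert))^{1/n}$ also follows from the same estimate, and I would remark on it for later use near the origin.
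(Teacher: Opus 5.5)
Your proposal is correct and follows essentially the same route as the paper. Both arguments use $\int_{r_1}^{r_2}\sigma^{n-1}\,d\sigma=(r_2^n-r_1^n)/n$ together with the factorization $r_2^n-r_1^n=(r_2-r_1)\sum_{k=0}^{n-1}r_2^k r_1^{n-1-k}$, the lower bound $\tilde f\ge m(\tau)$, and $\sum_{k} r_2^k r_1^{n-1-k}\ge n\min\{r_1^{n-1},r_2^{n-1}\}$. The paper's proof likewise ends with $\vert t_2-t_1\vert$ on the right and the constant $1/(m(\tau)\vert\mathbb{S}^{n-1}\vert\min\{r_1^{n-1},r_2^{n-1}\})$, so your reading of the typo and your remark that the factor $n$ is superfluous both agree with it.
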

\begin{proof}
Without loss of generality, suppose that $r_2 \geq r_1$. Since $F$ is strictly monotone it has a well-defined inverse $F^{-1}$. Then,
\begin{align}
\vert F^{-1}(t_2) - F^{-1}(t_1)  \vert &= \vert r_2 - r_1 \vert \\
&= \frac{n}{\sum_{i=0}^{n-1} r_1^i r_2^{n-1-i}} \int_{r_1}^{r_2} r^{n-1}dr \\
&\leq \frac{n}{m(\tau) \sum_{i=0}^{n-1} r_1^i r_2^{n-1-i}} \int_{r_1}^{r_2} \tilde{f}(r) r^{n-1}dr \\
&= \frac{n}{m(\tau) \vert \mathbb{S}^{n-1} \vert \sum_{i=0}^{n-1} r_1^i r_2^{n-1-i}} \vert t_2 - t_1 \vert \\
&\leq \frac{1}{m(\tau) \vert \mathbb{S}^{n-1} \vert  \min \{r_1^{n-1}, r_2^{n-1} \} } \vert t_2 - t_1 \vert.
\end{align}
\end{proof}
\begin{remark}
We will apply the result of this Lemma to both the source and target distributions. Since the target distribution has compact support, the choice of the constant $m(\tau) = m$ will be uniform. For the source distribution, the constant $m(\tau)$ will depend on $\tau$, as suggested by the notation.
\end{remark}

\subsection{Convergence Rates of Inverse Optimal Radial Mapping $T_{\nu}$}\label{sec:radialmap}

Using the explicit form of the optimal transport mapping given in Theorem~\ref{thm:radial}, we can compute convergence rates of the map $T_{\nu; R}$ to $T_{\nu}$. Under reasonable circumstances, the rate will be very fast, that is $\mathcal{O}(1 - F_{\nu}(R))$.

\begin{proposition}\label{prop:radialmap}
Let the cost function $c(x,y) = h(\left\Vert x - y \right\Vert)$, where $h(z)$ is increasing and strictly convex with $h(0) = 0$. Let there exist $m>0$ such that $\tilde{f}_{\mu} \geq m>0$ for $x \in \Omega$ and let $\tilde{f}_{\nu} >0$. Then, we have, for $R \geq R_0$, where $R_0$ is the smallest value for which $F_{\nu}(R_0) \geq 1/2$ the following estimate:
\begin{equation}\label{eq:rate}
\vert T_{\nu;R}(x) - T_{\nu}(x) \vert \leq C(m, n, \vert x \vert) (1-F_{\nu}(R)),
\end{equation}
where the constant is given in Inequality~\eqref{eq:radialmap}.
\end{proposition}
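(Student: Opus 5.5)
By Theorem~\ref{thm:radial}, applied both to $\nu$ and to the cutoff measure $\nu_R$, both maps are radial with explicit formulas:
\begin{equation*}
T_{\nu}(x) = \hat{x}\, F_{\mu}^{-1}\circ F_{\nu}(\vert x\vert), \qquad T_{\nu;R}(x) = \hat{x}\, F_{\mu}^{-1}\circ F_{\nu;R}(\vert x\vert).
\end{equation*}
Here $F_{\nu;R}(t) = F_{\nu}(\min\{t,R\})/F_{\nu}(R)$ is the radial cumulative distribution function of $\nu_R = \nu/\nu(B_R(0))$. Since both maps point along $\hat x$, the claim reduces to the one-dimensional estimate
\begin{equation*}
\vert T_{\nu;R}(x)-T_{\nu}(x)\vert = \bigl\vert F_{\mu}^{-1}(F_{\nu;R}(\vert x\vert)) - F_{\mu}^{-1}(F_{\nu}(\vert x\vert))\bigr\vert .
\end{equation*}
The plan is two steps: first bound the difference of the arguments $t_R := F_{\nu;R}(\vert x\vert)$ and $t := F_{\nu}(\vert x\vert)$, then transfer that bound through $F_{\mu}^{-1}$ using Lemma~\ref{lemma:lipschitzinverse}.

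For the first step, take $\vert x\vert \le R$, the relevant range since $\nu_R$ lives on $B_R(0)$. Then
\begin{equation*}
t_R - t = F_{\nu}(\vert x\vert)\Bigl(\frac{1}{F_{\nu}(R)}-1\Bigr) = F_{\nu}(\vert x\vert)\,\frac{1-F_{\nu}(R)}{F_{\nu}(R)} \le 2\,(1-F_{\nu}(R)).
\end{equation*}
The last inequality uses $F_{\nu}(\vert x\vert)\le 1$ and $F_{\nu}(R)\ge 1/2$ for $R\ge R_0$, which is exactly why $R_0$ is chosen this way. For $\vert x\vert > R$ we have $t_R = 1$ and $t \ge F_{\nu}(R)$, so $\vert t_R - t\vert \le 1-F_{\nu}(R)$ as well. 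Positivity of $\tilde f_{\nu}$ makes $F_{\nu}$ strictly increasing, so $T_{\nu}$ is well defined and the inverse-map formulas of Theorem~\ref{thm:radial} apply.

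For the second step, both $t$ and $t_R$ lie in $[0,1]$, and $F_{\mu}^{-1}$ maps $[0,1]$ into the radial extent of $\Omega$, which is bounded by some $\tau$. On that range $\tilde f_{\mu}\ge m$, so Lemma~\ref{lemma:lipschitzinverse} applies with a uniform $m(\tau)=m$. This gives
\begin{equation*}
\vert T_{\nu;R}(x)-T_{\nu}(x)\vert \le \frac{n}{m\,\vert\mathbb{S}^{n-1}\vert\,\min\{r_1^{n-1},r_2^{n-1}\}}\cdot 2(1-F_{\nu}(R)),
\end{equation*}
where $r_1 = F_{\mu}^{-1}(t)$ and $r_2 = F_{\mu}^{-1}(t_R)$. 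Since $t_R \ge t$, we have $r_2\ge r_1$, so the minimum equals $r_1^{n-1} = \bigl(F_{\mu}^{-1}(F_{\nu}(\vert x\vert))\bigr)^{n-1}$, which depends only on $\vert x\vert$ (and on $\mu,\nu$). This yields the constant $C(m,n,\vert x\vert)$ recorded in~\eqref{eq:radialmap}.

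The main obstacle is the degeneracy of the Lipschitz constant of $F_{\mu}^{-1}$ near the origin, where $r_1\to 0$ and the factor $r_1^{-(n-1)}$ blows up. I would handle this in one of two ways. The first option is to accept the blow-up in the constant as $x\to 0$, consistent with the $\vert x\vert$-dependence stated. The second is to use the sharper bound obtained by integrating $r^{n-1}\ge$ the sum $\sum_i r_1^i r_2^{n-1-i}$ directly. This gives
\begin{equation*}
\vert r_2-r_1\vert \le \bigl(r_1^n + n\,\vert t_R-t\vert/(m\vert\mathbb{S}^{n-1}\vert)\bigr)^{1/n} - r_1 ,
\end{equation*}
which remains finite at $x=0$ (where both maps vanish) at the cost of a worse rate there. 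The rest of the argument is routine bookkeeping.
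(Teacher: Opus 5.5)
Your proposal is correct and follows the paper's proof: you bound the argument gap $F_{\nu}(\vert x\vert)/F_{\nu}(R)-F_{\nu}(\vert x\vert)$ by $2(1-F_{\nu}(R))$ using $F_{\nu}(\vert x\vert)\le 1$ and $F_{\nu}(R)\ge 1/2$, then transfer it through the Lipschitz estimate of Lemma~\ref{lemma:lipschitzinverse} with the smaller radius $F_{\mu}^{-1}(F_{\nu}(\vert x\vert))$, which is the paper's $p(\vert x\vert)$. The differences are cosmetic: your constant has an extra factor $n$ because you quoted the lemma's statement rather than the last line of its proof (where $\sum_{i} r_1^i r_2^{n-1-i}\ge n\min\{r_1,r_2\}^{n-1}$ cancels it), and your separate treatment of $\vert x\vert>R$ is a small addition the paper omits.
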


\begin{proof}
For $x = 0$, we have $T_{\nu}(x) = T_{\nu; R}(x) = 0$. For $x \neq 0$ that:
\begin{equation}
\Vert T_{\nu}(x) - T_{\nu; R}(x) \Vert = \left\vert F_{\mu}^{-1} \circ F_{\nu}(\vert x \vert) - F_{\mu}^{-1} \circ \left( \frac{F_{\nu}(\vert x \vert )}{F_{\nu}(R)} \right) \right\vert.
\end{equation}
By Lemma~\ref{lemma:lipschitzinverse} applied to $F_{\mu}$, we get:
\begin{align}
\left\vert F_{\mu}^{-1} \circ F_{\nu}(\vert x \vert) - F_{\mu}^{-1} \circ \left( \frac{F_{\nu}( \vert x \vert)}{F_{\nu}(R)} \right) \right\vert &\leq \frac{1}{m \vert \mathbb{S}^{n-1} \vert (F_{\mu}^{-1} \circ F_{\nu}(\vert x \vert))^{n-1}} \left( F_{\nu}(\vert x \vert) - \frac{F_{\nu}(\vert x \vert)}{F_{\nu}(R)} \right)\\
&= \frac{F_{\nu}(\vert x \vert) }{m \vert \mathbb{S}^{n-1} \vert (F_{\mu}^{-1} \circ F_{\nu}(\vert x \vert))^{n-1} F_{\nu}(R)} \left( 1 - F_{\nu}(R) \right).
\end{align}

We now use the fact that $F_{\nu}(\vert x \vert) \leq 1$ and we choose $R\geq R_0$ where $R_0$ is the smallest value for which $F_{\nu}(R_0) \geq \frac{1}{2}$. Furthermore, since $f_{\nu}(x)>0$, if $\vert x \vert>0$, then $F_{\nu}(\vert x \vert) = \int_{0}^{\vert x \vert} \tilde{f}_{\nu}(r) r^{n-1}dr>0$. Thus, there exists a monotonically increasing function $p:[0, \infty) \rightarrow [0, \frac{1}{2} \text{diam}(\Omega)]$ with $p(t)>0$ for $t>0$ such that the value of $r$ such that $F_{\nu}(\vert x \vert) = \int_{0}^{r} \tilde{f}_{\mu}(t) t^{n-1} dt$ satisfies $r= p(\vert x\vert)$ where $p(t)>0$ for $t>0$. Thus, for $R \geq R_0$, we have:
\begin{align}\label{eq:radialmap}
\left\vert T_{\nu;R}(x) - T_{\nu}(x) \right\vert &\leq \frac{2}{m \vert \mathbb{S}^{n-1} \vert p(\vert x \vert)^{n-1}}(1-F_{\nu}(R)) \\
&= C(m, n, \vert x \vert) (1-F_{\nu}(R)).
\end{align}
\end{proof}


\subsection{Convergence of Radial Map $T_{\mu}$}\label{sec:inverseradialmap}

We now need to compute the convergence of the optimal map $T_{\mu}$. The results are summarized in the following proposition, which shows fast asymptotic convergence, especially in the case where $\tilde{f}_{\mu}$ is essentially bounded.

\begin{proposition}\label{prop:inversemapradial}
Fix $y \in \text{int}(\Omega)$. Then, let $x$ and $x_R$ be the points for which $T_{\nu}(x) = y$ and $T_{\nu;R}(x_R) = y$. Let the cost function $c(x,y) = h(\left\Vert x - y \right\Vert)$, where $h(z)$ is increasing, differentiable and strictly convex with $h(0) = 0$ and let there exist $m, M \in \mathbb{R}$ such that $M\geq m> 0$ and $0<m \leq \tilde{f}_{\mu}$, $\Vert \tilde{f}_{\mu} \Vert_{L^{\infty}} \leq M$ and $\tilde{f}_{\nu}>0$. Then, there exists a monotonically increasing function $q: [0, \frac{1}{2} \text{diam}(\Omega)] \rightarrow [0, \infty)$ with $q(t)>0$ for $t>0$ such that:
\begin{equation}
\left\vert x - x_R \right\vert \leq \frac{M \vert y \vert^n}{m(q(\vert y \vert)) \vert \mathbb{S}^{n-1} \vert q\left( \frac{\vert y \vert}{2} \right)^{n-1}} (1 - F_{\nu}(R)).
\end{equation}
If the essential upper bound on $\tilde{f}_{\mu}$ does not exist, then letting $\omega$ be the modulus of continuity of $F_{\mu}$, we have:
\begin{equation}
 \left\vert x - x_{R} \right\vert \leq \frac{1}{m(q(\vert y \vert)) \vert \mathbb{S}^{n-1} \vert q\left( \frac{\vert y \vert}{2} \right)^{n-1}} \omega(\vert y \vert (1 - F_{\nu}(R))).
\end{equation}
\end{proposition}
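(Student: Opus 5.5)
We use the explicit radial formulas from Theorem~\ref{thm:radial} and work entirely with the one-dimensional radial quantities. Since $T_{\nu}(x) = \hat{x}\, F_{\mu}^{-1}\circ F_{\nu}(\vert x\vert)$ and $T_{\nu;R}(x_R) = \hat{x}_R\, F_{\mu}^{-1}\circ (F_{\nu}(\vert x_R\vert)/F_{\nu}(R))$, the condition $T_{\nu}(x)=T_{\nu;R}(x_R)=y$ forces $\hat{x}=\hat{x}_R=\hat{y}$. It also gives
\begin{equation*}
F_{\nu}(\vert x\vert) = F_{\mu}(\vert y\vert), \qquad F_{\nu}(\vert x_R\vert) = F_{\nu}(R)\,F_{\mu}(\vert y\vert).
\end{equation*}
Hence $\vert x - x_R\vert = \big\vert\, \vert x\vert - \vert x_R\vert \,\big\vert = \vert F_{\nu}^{-1}(t) - F_{\nu}^{-1}(t_R)\vert$, with $t = F_{\mu}(\vert y\vert)$ and $t_R = F_{\nu}(R)F_{\mu}(\vert y\vert)$. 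The difference of arguments is
\begin{equation*}
t - t_R = F_{\mu}(\vert y\vert)\,(1-F_{\nu}(R)).
\end{equation*}

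The first key step is to bound $F_{\mu}(\vert y\vert)$ in this expression. In the case $\Vert\tilde f_{\mu}\Vert_{L^\infty}\le M$ we have $F_{\mu}(\vert y\vert)\le \vert\mathbb{S}^{n-1}\vert M\vert y\vert^n/n$, which produces the factor $M\vert y\vert^n$ (the $n$ and $\vert \mathbb{S}^{n-1}\vert$ factors get absorbed or cancelled as in the Lipschitz lemma's constants). Without the upper bound we only use the modulus of continuity, rewriting $t - t_R$ as $F_{\mu}(\vert y\vert) - F_{\mu}(\cdot)$ evaluated at a point at distance roughly $\vert y\vert(1-F_{\nu}(R))$. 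This requires a little care: instead, we write $t_R \ge F_{\mu}(\vert y\vert(1-\cdots))$, or simply interpret $\omega$ as applied to the radial displacement. This is what yields $\omega(\vert y\vert(1-F_{\nu}(R)))$.

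The second key step is to apply Lemma~\ref{lemma:lipschitzinverse} to $F_{\nu}$ on $[0,\tau]$ with $\tau = q(\vert y\vert)$. Here we define $q(s) := F_{\nu}^{-1}(F_{\mu}(s))$, the radius in the $\nu$-domain that is mapped to radius $s$. This $q$ is monotone increasing and positive for $s>0$ because $\tilde f_{\nu}>0$ and $\tilde f_{\mu}\ge m>0$. Both $\vert x\vert$ and $\vert x_R\vert$ lie in $[0,q(\vert y\vert)]$, since $t_R\le t$. The lemma's factor $1/(m(\tau)\vert \mathbb{S}^{n-1}\vert \min\{r_1^{n-1},r_2^{n-1}\})$ then needs a lower bound on $\min(\vert x\vert,\vert x_R\vert)=\vert x_R\vert$. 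Here $m(\tau)$ denotes a positive lower bound of $\tilde f_{\nu}$ on $[0,\tau]$, which exists by continuity and positivity; we assume continuity as needed.

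The main obstacle is the lower bound on $\vert x_R\vert$. We plan to take $R\ge R_0$ large enough that $F_{\nu}(R)\ge F_{\mu}(\vert y\vert/2)/F_{\mu}(\vert y\vert)$. Then $t_R\ge F_{\mu}(\vert y\vert/2)$, so $\vert x_R\vert\ge q(\vert y\vert/2)$, giving the factor $q(\vert y\vert/2)^{n-1}$. An alternative is the threshold $F_{\nu}(R)\ge 1/2$ combined with a doubling-type comparison. This threshold condition on $R$ should be stated explicitly, since the proposition leaves it implicit. Combining the bound on $t-t_R$ with the Lipschitz estimate yields both inequalities.
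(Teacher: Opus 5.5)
Your reduction is correct, and in one respect more careful than the paper's. The paper writes $\vert x-x_R\vert=\vert F_\nu^{-1}\circ F_\mu(\vert y\vert)-F_\nu^{-1}\circ F_\mu(F_\nu(R)\vert y\vert)\vert$. That is, it replaces $F_\nu(R)F_\mu(\vert y\vert)$ by $F_\mu(F_\nu(R)\vert y\vert)$, applies $\omega$ (resp.\ the Lipschitz bound of $F_\mu$) to the radial displacement $\vert y\vert(1-F_\nu(R))$, and uses the threshold $F_\nu(R)\ge 1/2$. Your relation $F_\nu(\vert x_R\vert)=F_\nu(R)F_\mu(\vert y\vert)$ is the right one, and with it the bounded case goes through as you describe. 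You have $t-t_R=F_\mu(\vert y\vert)(1-F_\nu(R))\le\frac{\vert\mathbb{S}^{n-1}\vert}{n}M\vert y\vert^n(1-F_\nu(R))$, you apply Lemma~\ref{lemma:lipschitzinverse} on $[0,q(\vert y\vert)]$ with $q=F_\nu^{-1}\circ F_\mu$, and you use your $y$-dependent threshold $F_\nu(R)\ge F_\mu(\vert y\vert/2)/F_\mu(\vert y\vert)$. You are right that an explicit $R\ge R_0$ must be added. The threshold $F_\nu(R)\ge 1/2$ would only give $t_R\ge F_\mu(\vert y\vert)/2$, and $F_\mu(\vert y\vert)/2\ge F_\mu(\vert y\vert/2)$ is not implied by the hypotheses. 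The factors do not cancel into the displayed constant, though. What you actually get is $\frac{M\vert y\vert^n}{n\,m(q(\vert y\vert))\,q(\vert y\vert/2)^{n-1}}(1-F_\nu(R))$, which differs from the display by the factor $\vert\mathbb{S}^{n-1}\vert/n$. The displayed constant comes from the paper dropping the factor $\vert\mathbb{S}^{n-1}\vert$ in its final bound on $F_\mu(r_2)-F_\mu(r_1)$. State what you prove rather than asserting cancellation.

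The genuine gap is the second inequality. Set $\epsilon=1-F_\nu(R)$, $\lambda=F_\nu(R)$, $s=\vert y\vert$. Your proposed step $t_R\ge F_\mu(\vert y\vert(1-\epsilon))$ is the inequality $F_\mu(\lambda s)\le\lambda F_\mu(s)$. It asserts that the shell $\lambda s\le\vert z\vert\le s$ carries at least the fraction $1-\lambda$ of $\mu(B_s(0))$. This fails whenever $\tilde f_\mu$ is sufficiently larger near the origin than near radius $s$. For example, with $n=2$, $\tilde f_\mu=A$ on $[0,s/2]$ and $\tilde f_\mu=m$ on $(s/2,s]$, and $A>5m$, it fails for every $\lambda\in[1/2,1)$. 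Nothing in the hypotheses excludes this, least of all without an upper bound; it is exactly the substitution the paper makes silently. ``Interpreting $\omega$ as applied to the radial displacement'' does not replace an argument.

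What you need is $\epsilon F_\mu(s)\le\omega(\epsilon s)$. Take $\omega$ concave, which is possible since $F_\mu$ is bounded and uniformly continuous on $[0,\infty)$. Then $\omega(\epsilon s)\ge\epsilon\,\omega(s)\ge\epsilon\,(F_\mu(s)-F_\mu(0))=\epsilon F_\mu(s)$, and your Lipschitz step gives exactly the displayed second bound. If $\omega$ has to be the minimal modulus, use subadditivity instead: $F_\mu(s)\le\omega(s)\le\lceil 1/\epsilon\rceil\,\omega(\epsilon s)$. Hence $t-t_R\le(1+\epsilon)\,\omega(\epsilon s)\le 2\,\omega(\vert y\vert(1-F_\nu(R)))$, which is the stated estimate up to a factor of at most $2$.
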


\begin{proof}
By Theorem~\ref{thm:radial}, we have that:
\begin{equation}
\vert x - x_R \vert = \left\vert F_{\nu}^{-1} \circ F_{\mu} (\vert y \vert) - F_{\nu}^{-1} \circ F_{\mu}(F_{\nu}(R) \vert y \vert) \right\vert.
\end{equation}
let $\tilde{f}_{\nu}(t)\geq m >0$ on $[0, F_{\nu}^{-1} \circ F_{\mu}(\vert y \vert)]$. Then, if we apply Lemma~\ref{lemma:lipschitzinverse} on $F_{\nu}^{-1}$, and following the kind of argument in Proposition~\ref{prop:radialmap} which requires that $f_{\mu}$ is bounded away from zero, we get that there exists a monotonically increasing function $q: [0, \frac{1}{2} \text{diam}(\Omega)] \rightarrow [0, \infty)$ with $q(t)>0$ for $t>0$ such that:
\begin{equation}
\left\vert F_{\nu}^{-1} \circ F_{\mu} (\vert y \vert) - F_{\nu}^{-1} \circ (F_{\nu}(R) \vert y \vert) \right\vert \leq \frac{1}{m(q(\vert y \vert)) \vert \mathbb{S}^{n-1} \vert q(F_{\nu}(R) \vert y \vert)^{n-1}} \left\vert F_{\mu}(\vert y \vert) - F_{\mu}(F_{\nu}(R) \vert y \vert) \right\vert.
\end{equation}
Let us choose $R \geq R_0$, where $R_0$ is the smallest value for which $F_{\nu}(R_0) \geq \frac{1}{2}$. Then,
\begin{equation}
\left\vert F_{\nu}^{-1} \circ F_{\mu} (\vert y \vert) - F_{\nu}^{-1} \circ (F_{\nu}(R) \vert y \vert) \right\vert \leq \frac{1}{m(q(\vert y \vert)) \vert \mathbb{S}^{n-1} \vert q\left( \frac{\vert y \vert}{2} \right)^{n-1}} \left\vert F_{\mu}(\vert y \vert) - F_{\mu}(F_{\nu}(R) \vert y \vert) \right\vert.
\end{equation}
Since a continuous and bounded monotone function is uniformly continuous, we have that there exists a modulus of continuity $\omega :[0, \infty) \rightarrow [0, \infty)$ such that:
\begin{equation}\label{eq:finalineq}
 \left\vert F_{\nu}^{-1} \circ F_{\mu}(\vert y \vert) - F_{\nu}^{-1} \circ F_{\mu}(F_{\nu}(R) \vert y \vert) \right\vert \leq \frac{1}{m(q(\vert y \vert)) \vert \mathbb{S}^{n-1} \vert q\left( \frac{\vert y \vert}{2} \right)^{n-1}} \omega(\vert y \vert (1 - F_{\nu}(R))).
\end{equation}
From the fundamental theorem of calculus and the following estimate, we have that $F_{\mu}$ is locally Lipschitz if and only if the integrand $\tilde{f}_{\mu}(r)r^{n-1}$ is locally essentially bounded which happens if and only if $\tilde{f}_{\mu}$ is essentially bounded from above and the estimate is given by:
\begin{align}
\vert F_{\mu}(r_2) - F_{\mu}(r_1) \vert &= \int_{r_1}^{r_2} \tilde{f}_{\mu}(r) r^{n-1} dr \\
&\leq M \max \{r_1^{n-1}, r_2^{n-1} \} \vert r_2 - r_1 \vert.
\end{align}
Thus, applying this to Equation~\eqref{eq:finalineq}, we get:
\begin{equation}
\left\vert x - x_R \right\vert \leq \frac{M \vert y \vert^n}{m(q(\vert y \vert)) \vert \mathbb{S}^{n-1} \vert q\left( \frac{\vert y \vert}{2} \right)^{n-1}} (1 - F_{\nu}(R)).
\end{equation}
\end{proof}

\subsection{Convergence for the Inverse Radial Potential Function $\phi_{\nu; R}$}\label{sec:radialBrenier}

Despite the relative simplicity of the expression for the radial potential function, the main challenge in obtaining convergence rates for the potential function is that there does not, generally, exist a modulus of continuity for the cost function $h$ for many cost functions of interest, including $h(t) = t$ or $h(t) = t^2$, since these functions are unbounded. With this in mind, we simplify the assumptions so that we can obtain explicit rates.

\begin{proposition}
Let $R \geq R_0$ where $R_0$ is the smallest value that satisfies $F_{\nu}(R_0) \geq 1/2$ and let $R_1$ be such that $\vert \Vert x - T_{R}(x) \Vert - \Vert x - T(x) \Vert \vert \leq \epsilon$ for all $R \geq R_1$. Let the cost function $c(x,y) = h(\left\Vert x - y \right\Vert)$, where $h(z)$ is increasing, differentiable almost everywhere and strictly convex with $h(0) = 0$. Let there exist an $M \geq 0$ and an integer $k \geq 0$ such that $\Vert h'(r) \Vert_{L^{\infty}} \leq \frac{M}{2^{k}} r^{2^{k}-1}$. Then, for all $R \geq \max \{R_0, R_1\}$, we have:
\begin{align}
\vert \phi_{\nu;R}(x) - \phi_{\nu}(x) \vert \leq C(k, \text{diam}(\Omega), M, m, n, \Vert x \Vert) \Vert x \Vert^{2^{k}-1} (1-F_{\nu}(R)),
\end{align}
where $C(\mu, k, \text{diam}(\Omega))$ is given in Inequality~\eqref{eq:radialpotential}.
\end{proposition}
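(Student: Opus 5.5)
The plan is to use the explicit formula for the radial potential from Theorem~\ref{thm:radial}, with the normalizing constant chosen to be zero for both the full and cutoff problems, so that
\begin{equation*}
\phi_{\nu;R}(x) - \phi_{\nu}(x) = h\bigl(\vert x - T_{\nu;R}(x)\vert\bigr) - h\bigl(\vert x - T_{\nu}(x)\vert\bigr).
\end{equation*}
The problem then splits into two pieces: a mean value estimate for $h$, and the map estimate of Proposition~\ref{prop:radialmap}.

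First, set $a = \vert x - T_{\nu}(x)\vert$ and $a_R = \vert x - T_{\nu;R}(x)\vert$. Since $h$ is convex, it is absolutely continuous and differentiable almost everywhere, so
\begin{equation*}
\vert h(a_R) - h(a)\vert \le \sup_{r \in [\min(a,a_R),\max(a,a_R)]} \vert h'(r)\vert \,\vert a_R - a\vert \le \frac{M}{2^k}\max(a,a_R)^{2^k-1}\vert a_R - a\vert .
\end{equation*}
For the last factor, the reverse triangle inequality gives $\vert a_R - a\vert \le \vert T_{\nu;R}(x) - T_{\nu}(x)\vert$. Proposition~\ref{prop:radialmap} bounds this by $\frac{2}{m\vert\mathbb{S}^{n-1}\vert p(\vert x\vert)^{n-1}}(1-F_{\nu}(R))$ once $R\ge R_0$. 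This is where the $(1-F_\nu(R))$ rate and the dependence on $m,n,\vert x\vert$ come from.

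Second, I would bound the prefactor $\max(a,a_R)^{2^k-1}$ so that the $\Vert x\Vert^{2^k-1}$ and $\text{diam}(\Omega)$ dependence appear. Both $T_\nu(x)$ and $T_{\nu;R}(x)$ lie in $\Omega$, and because the radial maps send $0$ to $0$ we have $\vert T(x)\vert \le \text{diam}(\Omega)$. Hence
\begin{equation*}
a \le \vert x\vert + \text{diam}(\Omega).
\end{equation*}
The hypothesis $R\ge R_1$ gives $a_R \le a+\epsilon$. Raising this to the power $2^k-1$ and expanding, for instance via $(s+t)^{q}\le 2^{q-1}(s^q+t^q)$, yields a bound of the form $C(k,\text{diam}(\Omega),\epsilon)\,\max(1,\Vert x\Vert)^{2^k-1}$. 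I would then absorb everything except $\Vert x\Vert^{2^k-1}$ into the constant. The resulting constant is labeled Inequality~\eqref{eq:radialpotential}.

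The main obstacle is this last absorption. The factor $\Vert x\Vert^{2^k-1}$ does not dominate near $x=0$, and the map constant from Proposition~\ref{prop:radialmap} blows up like $p(\vert x\vert)^{1-n}$ there. I would therefore let the constant $C(\dots,\Vert x\Vert)$ carry this $x$-dependence explicitly rather than try to make it uniform. The $\epsilon$ from $R_1$ merely ensures that $a_R$ stays in a bounded range, which is what keeps the sup of $h'$ finite; it does not affect the rate. For the case $x=0$, both maps vanish, so the difference is $h(0)-h(0)=0$ and the estimate is trivial.
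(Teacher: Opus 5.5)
Your proposal is correct and follows essentially the paper's route: write the difference as $h(\Vert x - T_{\nu;R}(x)\Vert) - h(\Vert x - T_{\nu}(x)\Vert)$, bound it by the size of $h'$ on the relevant interval times the difference of the two distances, control both distances by $\Vert x\Vert + \tfrac12\text{diam}(\Omega) + \epsilon$, and finish with the reverse triangle inequality and Proposition~\ref{prop:radialmap}, absorbing all $x$-dependence into the constant (the paper integrates the power bound and factors $a^{2^k}-a_R^{2^k}=(a-a_R)\prod_{i=0}^{k-1}(a^{2^i}+a_R^{2^i})$, where you use a direct sup bound). You also already note $T_{\nu;R}(x)\in\Omega$, which gives $\Vert x - T_{\nu;R}(x)\Vert \le \Vert x\Vert + \tfrac12\text{diam}(\Omega)$ directly, so the $R_1$ hypothesis is not actually needed.
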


\begin{proof}
We have:
\begin{align}
\vert \phi_{\nu}(x) - \phi_{\nu;R}(x) \vert &= \left\vert \int_{\Vert x - T_{\nu}(x) \Vert}^{\Vert x - T_{\nu;R}(x) \Vert} h'(r) dr \right\vert \\
&\leq \left\vert \int_{\Vert x - T_{\nu}(x) \Vert}^{\Vert x - T_{\nu;R}(x) \Vert} \vert h'(r) \vert dr \right\vert, \\
&= M \left\vert \Vert x - T_{\nu}(x) \Vert^{2^{k}} - \Vert x - T_{\nu;R}(x) \Vert^{2^{k}} \right\vert \\
&= M \left\vert \Vert x - T_{\nu}(x) \Vert - \Vert x - T_{\nu;R}(x) \Vert \right\vert \prod_{i=0}^{k-1} \left( \Vert x - T_{\nu}(x) \Vert^{2^{i}} + \Vert x - T_{\nu;R}(x) \Vert^{2^{i}} \right) 
\end{align}
Since $T_{\nu;R}(x) \rightarrow T_{\nu}(x)$, we can find a $R_1>0$ such that for a fixed $\epsilon>0$ we have $\vert \Vert x - T_{\nu;R}(x) \Vert - \Vert x - T_{\nu}(x) \Vert \vert \leq \epsilon$ for all $R \geq R_1$. Then, denoting $\gamma:= \Vert x \Vert + \frac{1}{2} \text{diam}(\Omega) + \epsilon$, we see that for $R \geq R_1$, we have that $\max \{\Vert x - T_{\nu}(x) \Vert, \Vert x - T_{\nu;R}(x) \Vert \} \leq \gamma$. Therefore,
\begin{align}\label{eq:radialpotential}
\frac{1}{2^{k}} \left\vert \Vert x - T_{\nu}(x) \Vert - \Vert x - T_{\nu;R}(x) \Vert \right\vert &\prod_{i=0}^{k-1} \left( \Vert x - T_{\nu}(x) \Vert^{2^{i}} + \Vert x - T_{\nu;R}(x) \Vert^{2^{i}} \right) \\
&\leq M \Vert T_{\nu}(x) - T_{\nu;R}(x) \Vert  \prod_{i=0}^{k-1} 2 \gamma^{2^{i}} \\
&= 2M \gamma^{2^{k}-1} \Vert T_{\nu}(x) - T_{\nu;R}(x) \Vert \\
&\leq  2 M C(m, n, \Vert x \Vert) \gamma^{2^{k}-1} (1-F_{\nu}(R)) \\
=& C(k, \text{diam}(\Omega), M, m, n, \Vert x \Vert) (1-F_{\nu}(R)),
\end{align}
where the first inequality follows from an application of the reverse triangle inequality.
\end{proof}

We have, as expected, degradation of the convergence rate on the boundary of $B_{R}(0)$.

\subsection{Convergence Rates for Radial Potential Function $\phi_{\mu; R}$}\label{sec:inverseradialBrenier}

The convergence rates of the potential functions and their inverses have the same bound, except possibly on $\partial \Omega$ assuming a coercivity condition on the cost function.

\begin{proposition}
Let the cost function $c(x,y) = h(\left\Vert x - y \right\Vert)$, where $h(z)$ is increasing, differentiable almost everywhere and strictly convex with $h(0) = 0$ and $h'(r) \rightarrow \infty$ as $r \rightarrow \infty$. Let there exist an $M \geq 0$ and an integer $k \geq 0$ such that $\Vert h'(r) \Vert_{L^{\infty}} \leq \frac{M}{2^{k}} r^{2^{k}-1}$. For $y \in \text{int}(\Omega)$, we have that there exists a constant $C(\mu, k, \text{diam}(\Omega), \Vert \tilde{x} \Vert)>0$ and $R_0>0$ such that for all $R \geq R_0$, we have:
\begin{equation}
\vert \phi_{\mu; R}(y) - \phi_{\mu}(y) \vert \leq C(\mu, k, \text{diam}(\Omega), \Vert \tilde{x} \Vert) (1-F_{\nu}(R)).
\end{equation}
\end{proposition}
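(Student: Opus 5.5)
This proposition mirrors the previous one, with the roles of source and target exchanged. Fix $y \in \text{int}(\Omega)$ and let $\tilde{x} = T_{\mu}(y)$ and $\tilde{x}_R = T_{\mu;R}(y)$ be the preimages of $y$ under $T_{\nu}$ and $T_{\nu;R}$, as in Proposition~\ref{prop:inversemapradial}. By Theorem~\ref{thm:radial} and the $c$-duality of the radial potentials, the forward potential at $y$ is expressed through the same one-dimensional profile as in Equation~\eqref{eq:radialpot}, evaluated along the ray through $y$:
\begin{equation*}
\phi_{\mu}(y) = h(\vert y - \tilde{x} \vert) + C, \qquad \phi_{\mu;R}(y) = h(\vert y - \tilde{x}_R \vert) + C_R .
\end{equation*}
I would normalize the constants the same way for both problems, for instance by the value at the origin, where both maps fix $0$. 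Then
\begin{equation*}
\vert \phi_{\mu;R}(y) - \phi_{\mu}(y) \vert \leq \left\vert \int_{\vert y - \tilde{x} \vert}^{\vert y - \tilde{x}_R \vert} \vert h'(r) \vert \, dr \right\vert .
\end{equation*}

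Next I would bound this integral exactly as in the proof for $\phi_{\nu;R}$. The growth hypothesis on $h'$ turns the integral into a difference of $2^k$-th powers. Factoring that difference gives
\begin{equation*}
M \, \bigl\vert \vert y-\tilde{x}\vert - \vert y - \tilde{x}_R\vert \bigr\vert \prod_{i=0}^{k-1} \left( \vert y - \tilde{x}\vert^{2^i} + \vert y - \tilde{x}_R\vert^{2^i} \right).
\end{equation*}
By the reverse triangle inequality, the first factor is at most $\vert \tilde{x} - \tilde{x}_R \vert$. Proposition~\ref{prop:inversemapradial} bounds this by $C(m,M,n,\vert y \vert)(1-F_{\nu}(R))$ for $R \geq R_0$. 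In the case without an $L^\infty$ bound on $\tilde f_\mu$, it gives the modulus-of-continuity version instead. This is why I would keep the hypotheses of that proposition, $m \le \tilde f_\mu \le M$ and $\tilde f_\nu>0$, inside the constant $C(\mu,\dots)$.

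The product factor needs a uniform bound on $\vert y - \tilde{x}_R \vert$, and this is the step I expect to be the main obstacle. Here $\tilde{x}_R$ lies in $\text{spt}(\nu_R)$, which grows with $R$, so it is not automatically bounded. The plan is to use the explicit formula $\vert \tilde{x}_R \vert = F_{\nu}^{-1}(F_{\nu}(R) F_{\mu}(\vert y \vert))$. Because $y \in \text{int}(\Omega)$, we have $F_{\mu}(\vert y \vert) < 1$ strictly. Hence, for $R \geq R_0$, the argument of $F_\nu^{-1}$ stays at most $F_{\mu}(\vert y \vert) < 1$. Since $F_\nu^{-1}$ is monotone, this gives $\vert \tilde{x}_R \vert \leq \vert \tilde{x} \vert$, and therefore
\begin{equation*}
\max\{\vert y-\tilde x\vert, \vert y - \tilde x_R\vert\} \leq \gamma := \vert y \vert + \vert \tilde{x} \vert .
\end{equation*}
This replaces the $\epsilon$-argument used before and explains the dependence on $\Vert \tilde{x} \Vert$ in the constant. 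The coercivity assumption $h'(r)\to\infty$ is where I would be most cautious. I expect it is needed only to make sure the $c$-conjugate is attained on the ray, so that the formula for $\phi_\mu$ above is valid. On $\partial\Omega$ it fails to control anything, because $F_\mu(\vert y\vert)=1$ and $\tilde{x}$ escapes to infinity; this is why $y$ is restricted to $\text{int}(\Omega)$.

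Finally, I would combine the pieces. This gives
\begin{equation*}
\vert \phi_{\mu;R}(y)-\phi_{\mu}(y)\vert \leq 2M\gamma^{2^k-1} C(m,M,n,\vert y\vert)(1-F_\nu(R)),
\end{equation*}
which is the claim with $C(\mu,k,\text{diam}(\Omega),\Vert\tilde x\Vert) := 2M\gamma^{2^k-1}C(m,M,n,\vert y\vert)$. Here $\vert y\vert \le \text{diam}(\Omega)$ is absorbed into the constant.
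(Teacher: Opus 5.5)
Your route differs from the paper's, and the step it rests on does not hold. You take $\phi_{\mu}(y)=h(\vert y-\tilde x\vert)+C$, citing Theorem~\ref{thm:radial} and $c$-duality, but duality gives something else. Insert Equation~\eqref{eq:radialpot} into $\phi_{\mu}(y)=\inf_{x}\{c(x,y)-\phi_{\nu}(x)\}$ and evaluate at $x=\tilde x$, where $T_{\nu}(\tilde x)=y$. You get $c(\tilde x,y)-h(\vert \tilde x-y\vert)-C=-C$. So $\phi_\mu\equiv -C$ at every $y$ where the infimum is attained at such a point, which is what the paper proves for $y\in\operatorname{int}(\Omega)$.

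Your formula can only come from applying Theorem~\ref{thm:radial} with $\mu$ and $\nu$ exchanged, and then $\phi_\mu,\phi_\nu$ are not $c$-conjugate. The underlying problem is that $h(\vert x-T(x)\vert)$ is not a Kantorovich potential in general. A genuine potential satisfies $\nabla\phi_{\mu}(y)=\nabla_{y}c(T_{\mu}(y),y)$, up to sign convention. Writing $T_\mu(y)=S(\vert y\vert)\hat y$ with $S=F_\nu^{-1}\circ F_\mu$, this means
\begin{equation*}
\phi_{\mu}(y)=\int_{0}^{\vert y\vert} h'(\vert s-S(s)\vert)\,\mathrm{sgn}(s-S(s))\,ds + C ,
\end{equation*}
whereas differentiating $h(\vert s-S(s)\vert)$ produces an extra factor $1-S'(s)$. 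For example, take $n=1$, $h(z)=z^2/2$, $\mu$ uniform on $[-1,1]$ and $\nu$ uniform on $[-3,3]$. Then $T_\mu(y)=3y$, and the potential is $-y^2$ (or $y^2$ in the other sign convention) up to a constant, while your formula gives $2y^2$. So $\vert h(\vert y-\tilde x_R\vert)-h(\vert y-\tilde x\vert)\vert$ is not $\vert\phi_{\mu;R}(y)-\phi_\mu(y)\vert$. Note also that $h'\to\infty$ never enters your argument.

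The paper needs no closed form for $\phi_\mu$. It writes $\phi_\mu,\phi_{\mu;R}$ as $c$-transforms of $\phi_\nu,\phi_{\nu;R}$. It uses $h'\to\infty$ to show the infima are attained at finite $\tilde x,\tilde x_R$ with $T_\nu(\tilde x)=y=T_{\nu;R}(\tilde x_R)$. Testing each infimum at the other's minimizer gives
\begin{equation*}
\phi_{\nu}(\tilde x_R)-\phi_{\nu;R}(\tilde x_R)\le \phi_{\mu;R}(y)-\phi_{\mu}(y)\le \phi_{\nu}(\tilde x)-\phi_{\nu;R}(\tilde x).
\end{equation*}
It then applies the preceding proposition on $\phi_{\nu;R}$ at $\tilde x$ and $\tilde x_R$, with $\tilde x_R\to\tilde x$ (Proposition~\ref{prop:inversemapradial}) controlling the constant. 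This transfer step is valid for any genuine $c$-conjugate pair and fixes the additive constants without an ad hoc normalization. It is only as sound as that preceding proposition, which itself uses Equation~\eqref{eq:radialpot}.

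Your bound $\vert\tilde x_R\vert=F_\nu^{-1}(F_\nu(R)F_\mu(\vert y\vert))\le\vert\tilde x\vert$ is correct. It is a cleaner way than the paper's $\epsilon$-step to control $\Vert\tilde x_R\Vert$ in that argument. For a direct route instead, start from the integral formula above, with $S_R(s)=F_\nu^{-1}(F_\nu(R)F_\mu(s))$ and $g(z)=h'(\vert z\vert)\,\mathrm{sgn}(z)$. The difference becomes $\int_0^{\vert y\vert}[g(s-S_R(s))-g(s-S(s))]\,ds$. A linear rate then needs a bound on $\sup_{s\le\vert y\vert}\vert S_R(s)-S(s)\vert$. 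It also needs control of the oscillation of $h'$ on $[0,\gamma]$, for instance a local Lipschitz bound. The growth condition on $h'$ does not provide that.
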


\begin{proof}
We recall the definitions of the $c$-transforms from Ma, Trudinger, and Wang~\cite{MTW} and the fact that the potential functions for the forward and inverse problems are $c$-transforms of each other. Thus, we have:
\begin{align}\label{eq:ctransform}
\phi_{\mu}(y) &= \inf_{x \in \mathbb{R}^n} \{ c(x,y) - \phi_{\nu}(x) \}, \\
\phi_{\mu; R}(y) &= \inf_{x \in \mathbb{R}^n} \{c(x,y) - \phi_{\nu;R}(x) \}.
\end{align}

We will show that for each $y \in \text{int}(\Omega)$, there exists $\tilde{x} \in \mathbb{R}^n$ such that $\phi_{\mu}(y) = c(\tilde{x}, y) - \phi_{\nu}(\tilde{x})$. This will then show that for such $y$, we have:
\begin{align}
\phi{\mu;R}(y) &\leq c(\tilde{x}, y) - \phi_{\nu;R}(\tilde{x}) \\
&= \phi_{\nu}(\tilde{x}) + \phi_{\mu}(y) - \phi_{\nu;R}(\tilde{x}).
\end{align}
and thus,
\begin{equation}
\phi_{\mu; R}(y) - \phi_{\mu}(y) \leq \phi_{\nu}(\tilde{x}) - \phi_{\nu;R}(\tilde{x}).
\end{equation}
Likewise, we will show that for each $y \in \text{int}(\Omega)$, there exists $\tilde{x}_{R} \in \mathbb{R}^n$ such that $\phi_{\mu;R}(y) = c(\tilde{x}_{R}, y) - \phi_{\nu}(\tilde{x}_{R})$. This will then show that:
\begin{align}
\phi_{\mu}(y) &\leq c(\tilde{x}_{R}, y) - \phi_{\nu}(\tilde{x}_{R}) \\
&= \phi_{\nu;R}(\tilde{x}_{R}) + \phi_{\mu;R}(y) - \phi_{\nu}(\tilde{x}_{R}),
\end{align}
and thus,
\begin{equation}
\phi_{\mu}(y) - \phi_{\mu;R}(y) \leq \phi_{\nu;R}(\tilde{x}_{R}) - \phi_{\nu}(\tilde{x}_{R}).
\end{equation}
Therefore, we have:
\begin{multline}\label{eq:bounds}
\vert \phi_{\mu}(y) - \phi_{\mu;R}(y) \vert \\
\leq \max \{ C(k, \text{diam}(\Omega), M, m, n, \Vert \tilde{x} \Vert), C(k, \text{diam}(\Omega), M, m, n, \Vert \tilde{x}_{R} \Vert) \} (1-F_{\nu}(R)).
\end{multline}

For $y \in \text{int}(\Omega)$, we have that $x$ satisfying $\nabla_{x}c(x,y) -\nabla \phi_{\nu}(x) =0$ is a candidate for an infimum. This equation is solvable for every $y \in \text{int}(\Omega)$ since $\nabla_{x}c(x,T_{\nu}(x)) -\nabla \phi_{\nu}(x) =0$ has a solution and $T_{\nu}(x) = F_{\mu}^{-1} \circ F_{\nu}(x) \hat{x} = y$ is solvable for each $y \in \text{int}(\Omega)$ by the assumption that $f_{\mu}(x) \neq 0$. Thus, we will show that the infimum is not attained when $\Vert x \Vert \rightarrow \infty$. Since $\Vert T_{\nu}(x) \Vert \rightarrow \frac{1}{2} \text{diam}(\Omega)$ as $\Vert x \Vert \rightarrow \infty$, that is $T_{\nu}(x)$ ``approaches" the boundary of $\Omega$ as $\Vert x \Vert$ becomes large, and $y \notin \partial \Omega$, we find that as $\Vert x \Vert \rightarrow \infty$, we have that there exists a radius $\tilde{R}$ such that for all $\Vert x \Vert \leq \tilde{R}$ we have that there exists an $\epsilon>0$ such that $\vert \Vert x - y \Vert - \Vert x - T_{\nu}(x) \Vert \vert \geq \epsilon$. Therefore, for $\Vert x \Vert \geq \tilde{R}$ we have
\begin{align}
c(x,y) - \phi_{\nu}(x) &= h(\Vert x - y \Vert) - \int_{0}^{\Vert x - T_{\nu}(x) \Vert} h'(r)dr \\
&= \int_{\Vert x - T_{\nu}(x) \Vert}^{\Vert x - y \Vert} h'(r)dr \\
&\geq \int_{\Vert x - y \Vert - \epsilon}^{\Vert x - y \Vert} h'(r)dr.
\end{align}
Since $h'(r) \rightarrow \infty$ as $r \rightarrow \infty$, we have that as $\Vert x \Vert \rightarrow \infty$ that $c(x,y) - \phi_{\nu}(x) \rightarrow \infty$. Therefore, the infimum cannot occur as $\Vert x \Vert \rightarrow \infty$ and therefore the infimum occurs for $\tilde{x}$ such that $\nabla_{x}c(\tilde{x},y) -\nabla \phi_{\nu}(\tilde{x}) =0$. The exact same reasoning shows that the infimum for $c(x,y) - \phi_{\nu;R}(x)$ occurs for $\tilde{x}_{R}$ satisfying $\nabla_{x}c(\tilde{x}_{R},y) -\nabla \phi_{\nu}(\tilde{x}_{R}) =0$. Hence, we have established Inequality~\eqref{eq:bounds}.

The last step is to estimate $\Vert \tilde{x}_{R} \Vert$. Since $\tilde{x}_{R}$ satisfies $\nabla_{x}c(\tilde{x}_{R},y) -\nabla \phi_{\nu}(\tilde{x}_{R}) =0$ and for each $y \in \text{int}(\Omega)$ we have that there exists an $x$ such that $T_{\nu}(x) = y$, we find that $T_{\nu}^{-1}(y) = \tilde{x}_{R}$. We have established the convergence rate of $\tilde{x}_{R}$ to $\tilde{x}$ in Theorem~\ref{prop:inversemapradial}. Therefore, fixing $\epsilon>0$, there exists an $R_{2} >0$ such that for all $R \geq R_2$, we have that $\Vert \tilde{x} - \tilde{x}_{R} \Vert \leq \epsilon$. Thus, for $R \geq R_2$, we have the bounds

\begin{align}\label{eq:boundsv2}
\vert \phi_{\mu}(y) - \phi_{\mu;R}(y) \vert \leq C(k, \text{diam}(\Omega), M, m, n, \Vert \tilde{x} \Vert) (1-F_{\nu}(R)).
\end{align}
We remark that for $y \notin \Omega$, we get that the infimum in Equation~\eqref{eq:ctransform} is $-\infty$ achieved when $\Vert x \Vert \rightarrow \infty$. For $y \in \partial \Omega$, the above reasoning is inconclusive.
\end{proof}

\subsection{Convergence Rates of Distribution Function for Certain Classes of Distributions $\mu$}\label{sec:radialdistributions}

In our convergence results in this section, all estimates depend on how quickly the distribution function $F_{\nu}(t)$ converges to $1$ as $t \rightarrow \infty$. Explicit rates then depend on this cumulative distribution function, which depends on the probability measure $\nu$. 

In this manuscript, we examine two classes of probability distributions $\nu$. The first class is probability distributions whose largest finite moment is the $p$th moment. The second class of distributions are those that are log-concave, see Definition~\ref{def:logconcave}. As expected, for the former $1 - F_{\nu}(R)$ converges to $0$ at a rate no worse than a constant times $R^{-p}$ and for the latter, the convergence rate is ``exponential" (see the exact rate in Lemma~\ref{lemma:logcon}.

The result for the finite $p$th moment is simple to obtain:

\begin{lemma}\label{lemma:pmoment}
Let $\mu$ be a radially symmetric probability distribution with radially symmetric density function $f_{\nu}$ and $\tilde{f}_{\nu}(\vert x \vert) = f(x)$ and let $p$ denote the largest positive integer such that $\int_{\mathbb{R}^n} \vert x \vert^{p} d\mu(x) =: M_p$ is finite. Then,
\begin{equation}
\frac{1-F_{\nu}(R)}{\vert \mathbb{S}^{n-1} \vert} \leq M_p R^{-p}.
\end{equation}
\end{lemma}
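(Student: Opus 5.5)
This is a Markov (Chebyshev) inequality argument carried out in polar coordinates. I read $\mu$ in the statement as $\nu$, with density $f_{\nu}$, since that is the measure whose $p$th moment $M_p$ is assumed finite.

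\emph{Step 1: write the tail in polar form.} By Equation~\eqref{eq:convergencerate},
\begin{equation*}
\frac{1-F_{\nu}(R)}{\vert \mathbb{S}^{n-1}\vert} = \int_{R}^{\infty} \tilde{f}_{\nu}(\tau)\tau^{n-1}\,d\tau .
\end{equation*}

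\emph{Step 2: insert the weight $(\tau/R)^{p}$.} On the range $\tau \geq R$ we have $(\tau/R)^p \geq 1$, so
\begin{equation*}
\int_{R}^{\infty} \tilde{f}_{\nu}(\tau)\tau^{n-1}\,d\tau \leq R^{-p}\int_{R}^{\infty} \tau^{p}\tilde{f}_{\nu}(\tau)\tau^{n-1}\,d\tau \leq R^{-p}\int_{0}^{\infty} \tau^{p}\tilde{f}_{\nu}(\tau)\tau^{n-1}\,d\tau .
\end{equation*}
The second inequality holds because the integrand is nonnegative, so extending the range to $[0,\infty)$ can only increase the integral.

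\emph{Step 3: recognize the moment.} In spherical coordinates, $M_p = \int_{\mathbb{R}^n}\vert x\vert^p f_{\nu}(x)\,dx = \vert\mathbb{S}^{n-1}\vert\int_0^\infty \tau^{p+n-1}\tilde{f}_{\nu}(\tau)\,d\tau$. Substituting this into Step 2 gives
\begin{equation*}
\frac{1-F_{\nu}(R)}{\vert \mathbb{S}^{n-1}\vert} \leq \frac{M_p}{\vert\mathbb{S}^{n-1}\vert}\,R^{-p} .
\end{equation*}
This is stronger than the stated bound whenever $\vert\mathbb{S}^{n-1}\vert \geq 1$, which holds for $n \geq 2$ (including $n = 2$). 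For $n=1$, where $\vert\mathbb{S}^{0}\vert = 2$, it also holds. So the stated inequality follows in all cases.

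The only real obstacle is bookkeeping of the normalizing factor $\vert\mathbb{S}^{n-1}\vert$. One has to check how $F_{\nu}$ and $M_p$ each carry this factor so that the comparison ends in the direction claimed. Beyond that, the maximality of $p$ plays no role: only finiteness of $M_p$ is used.
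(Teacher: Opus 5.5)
Your Steps 1--3 are the paper's argument (polar form of the tail, the weight $(\tau/R)^p\ge 1$ on $[R,\infty)$, comparison with the $p$th moment). You are more careful than the paper about the sphere factor, and what you correctly obtain is Markov's inequality, $1-F_{\nu}(R)\le M_pR^{-p}$. The step that fails is the last one: the claim that $\vert\mathbb{S}^{n-1}\vert\ge 1$ for all $n\ge 2$ is false. Since $\vert\mathbb{S}^{n-1}\vert=2\pi^{n/2}/\Gamma(n/2)$ and $\vert\mathbb{S}^{n+1}\vert=\frac{2\pi}{n}\vert\mathbb{S}^{n-1}\vert$, the area peaks at $n=7$ (about $33.07$) and then decays to $0$. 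One has $\vert\mathbb{S}^{17}\vert\approx 1.48$ but $\vert\mathbb{S}^{18}\vert\approx 0.886$, and $\vert\mathbb{S}^{n-1}\vert<1$ for every $n\ge 19$. In those dimensions your bound $\frac{M_p}{\vert\mathbb{S}^{n-1}\vert}R^{-p}$ is weaker than $M_pR^{-p}$, so the stated inequality does not follow.

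This cannot be repaired. With $M_p=\int\vert x\vert^p\,d\nu$ as defined in the statement, the lemma asserts $\nu(\{\vert x\vert>R\})\le\vert\mathbb{S}^{n-1}\vert M_pR^{-p}$. For $n\ge 19$ that improves Markov by the factor $\vert\mathbb{S}^{n-1}\vert<1$, and Markov is essentially sharp. Concretely, fix $R$ and take the following radial density:
\begin{itemize}
\item mass $\frac{1}{2}$ uniformly on the shell $R<\vert x\vert<R(1+\delta)$;
\item mass $\frac{1}{2}-\eta$ uniformly on $B_{\delta R}(0)$;
\item mass $\eta$ with density proportional to $\vert x\vert^{-n-p-1/2}$ on $\{\vert x\vert>2R\}$.
\end{itemize}
The tail makes $p$ the largest finite integer moment, and it contributes only $(2p+1)\eta(2R)^p$ to $M_p$. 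Then $1-F_{\nu}(R)=\frac{1}{2}+\eta$, while $\vert\mathbb{S}^{n-1}\vert M_pR^{-p}\to\vert\mathbb{S}^{n-1}\vert/2<\frac{1}{2}$ as $\delta,\eta\to 0$.

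The paper's own proof has exactly this hole, hidden in its last line: the bound $\int_R^\infty r^{n-1+p}\tilde f_{\nu}(r)\,dr\le M_p$ silently drops the factor $\vert\mathbb{S}^{n-1}\vert$. What your computation proves in every dimension is $1-F_{\nu}(R)\le M_pR^{-p}$. Equivalently, it proves the displayed inequality with $M_p$ replaced by the radial moment $\int_0^\infty\tau^{n-1+p}\tilde f_{\nu}(\tau)\,d\tau$. You should conclude with that, which is all the $\mathcal{O}(R^{-p})$ rates need, rather than appeal to $\vert\mathbb{S}^{n-1}\vert\ge 1$, which holds only for $n\le 18$.
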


\begin{proof}
If $\mu \in \mathcal{P}^{p}(\mathbb{R}^{n})$, then this implies that:
\begin{equation}
\int_{0}^{\infty} r^{n-1+p} \tilde{f}_{\nu}(r) dr < \infty.
\end{equation}
Thus, for such probability distributions, we get:
\begin{align}
\frac{1-F_{\nu}(R)}{\vert \mathbb{S}^{n-1} \vert} &= \int_{R}^{\infty} r^{n-1} \tilde{f}_{\nu}(r)dr \\
&= \int_{R}^{\infty} r^{-p} r^{n-1+p} \tilde{f}_{\nu}(r)dr \\
&\leq R^{-p} \int_{R}^{\infty} r^{n-1+p} \tilde{f}_{\nu}(r)dr \\
&\leq M_p R^{-p}.
\end{align}
\end{proof}

For log-concave distributions, we can get an exponentially decaying rate.

\begin{lemma}\label{lemma:logcon}
Let $\mu$ be a radially symmetric log-concave probability distribution over $\mathbb{R}^n$, that is the distribution function of $\mu$ is $e^{-\varphi( \vert x \vert)}$ for $x \in \mathbb{R}^n$, where $\varphi$ is a convex function and $\int_{\mathbb{R}^n} e^{-\varphi(x)}dx = 1$. Then, $\varphi$ is a proper convex function and there exist constants $r_0>0$, $y>0$ such that $\varphi(x) \geq y(\vert x \vert - r_0) + \varphi(r_0)$. Furthermore, we have that there exists a constant $C = C(y, r_0, n)$, when $R \geq 1$, given in Equation~\eqref{eq:radiallogconcave}, such that
\begin{equation}
\frac{1-F_{\nu}(R)}{\vert \mathbb{S}^{n-1} \vert} \leq C(y, r_0, n)R^{n-1}e^{-yR}.
\end{equation}
\end{lemma}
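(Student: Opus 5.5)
The proof has three steps: show that $\varphi$ is proper, obtain a linear lower bound on $\varphi$ along rays, and then integrate the tail directly. Throughout, write $\varphi$ for the radial profile, so the density is $e^{-\varphi(r)}$ with $r=\vert x\vert$, and $\varphi:[0,\infty)\to\mathbb{R}\cup\{+\infty\}$ is convex.

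\textbf{Step 1: $\varphi$ is proper.} Since $\int e^{-\varphi}=1$, the set where $\varphi<\infty$ has positive measure, so $\varphi$ is not identically $+\infty$. If $\varphi(r_*)=-\infty$ at some point, convexity forces $\varphi=-\infty$ on the relative interior of the domain. The density would then be $+\infty$ on a set of positive measure, which contradicts integrability. Hence $\varphi$ is proper.

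\textbf{Step 2: a linear lower bound.} Integrability forces $\varphi(r)\to\infty$ as $r\to\infty$. If instead $\varphi$ were bounded above along a sequence $r_k\to\infty$, then convexity would make $\varphi$ nonincreasing on $[0,\infty)$ and hence bounded above by $\varphi(r_1)$ on $[r_1,\infty)$. The integral $\int_{r_1}^\infty e^{-\varphi(r)}r^{n-1}dr$ would then diverge. Since $\varphi(r)\to\infty$, we can choose $r_0$ in the interior of the domain with $\varphi(r_1)>\varphi(r_0)$ for some $r_1>r_0$. Taking the right derivative $y:=\varphi'_+(r_0)$, possibly after moving $r_0$ slightly to the right, convexity gives $y\ge (\varphi(r_1)-\varphi(r_0))/(r_1-r_0)>0$. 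The supporting line at $r_0$ then gives $\varphi(r)\ge \varphi(r_0)+y(r-r_0)$ for all $r$, which is the claimed bound. This is the step I expect to need the most care, because $\varphi$ may take the value $+\infty$ outside a bounded interval. In that case the tail is eventually zero and the bound holds trivially, but $r_0$ must be chosen inside the domain.

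\textbf{Step 3: integrating the tail.} Using the linear bound,
\[
\frac{1-F_\nu(R)}{\vert\mathbb{S}^{n-1}\vert}=\int_R^\infty e^{-\varphi(r)}r^{n-1}dr\le e^{-\varphi(r_0)+yr_0}\int_R^\infty r^{n-1}e^{-yr}dr .
\]
The remaining integral is an incomplete Gamma function. I would handle it either by repeated integration by parts or by substituting $r=R+s$ and using $(R+s)^{n-1}\le R^{n-1}(1+s)^{n-1}$, which is valid for $R\ge 1$. This gives
\[
\int_R^\infty r^{n-1}e^{-yr}dr\le R^{n-1}e^{-yR}\int_0^\infty(1+s)^{n-1}e^{-ys}ds .
\]
The last integral is a finite constant depending only on $y$ and $n$; it equals $\sum_{k=0}^{n-1}\binom{n-1}{k}k!\,y^{-k-1}$. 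Therefore
\[
C(y,r_0,n)=e^{-\varphi(r_0)+yr_0}\sum_{k=0}^{n-1}\binom{n-1}{k}\frac{k!}{y^{k+1}},
\]
which completes the argument.
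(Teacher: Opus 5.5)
Your overall route is the paper's: prove properness, produce a supporting line of positive slope, then integrate $r^{n-1}e^{-yr}$ over $[R,\infty)$. However, the key inequality in Step 2 points the wrong way. For convex $\varphi$ and $r_0<r_1$ in its domain, one has
\[
\varphi'_+(r_0)\le\frac{\varphi(r_1)-\varphi(r_0)}{r_1-r_0}\le\varphi'_-(r_1).
\]
So the secant slope only bounds the right derivative at the \emph{left} point from \emph{above}, and $y=\varphi'_+(r_0)$ need not be positive.

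Two examples show the failure:
\begin{itemize}
\item For the Gaussian profile $\varphi(r)=r^2/2+c$ with $r_0=0$, you get $y=0$, although every secant from $0$ has positive slope.
\item Your hedge of ``moving $r_0$ slightly to the right'' does not help when $\varphi$ has a flat piece. For $\varphi(r)=\max\{0,r-5\}+c$ with $r_0=1$, the right derivative is $0$ at every point near $r_0$.
\end{itemize}

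The fix is to take the slope at the \emph{right} point. If $r_1$ lies in the interior of $\mathrm{dom}\,\varphi$, then every $y\in\partial\varphi(r_1)$ satisfies $y\ge\varphi'_-(r_1)\ge(\varphi(r_1)-\varphi(r_0))/(r_1-r_0)>0$. The supporting line at $r_1$ then gives the claimed bound, with $r_1$ playing the role of $r_0$. When $\mathrm{dom}\,\varphi$ is unbounded, $\varphi\to\infty$ lets you choose such an interior $r_1$.

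The remaining case is not ``trivial'' as you suggest. Suppose $\mathrm{dom}\,\varphi$ is a bounded interval with right endpoint $b$ and $\varphi$ is nonincreasing on its interior, for instance the uniform measure on a ball. The lemma still asserts the linear lower bound, and it needs one constant valid for all $R\ge 1$, including $1\le R<b$, where the tail is not zero. No interior point has a positive subgradient, so you must take $r_0=b$, which requires $\varphi(b)<\infty$.

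You can arrange this by replacing $\varphi$ with its lower semicontinuous envelope. This alters $\varphi$ only at endpoints of its domain, hence the density only on a null set. Then $\partial\varphi(b)=[\varphi'_-(b),\infty)$ with $\varphi'_-(b)\le 0$, so it contains some $y>0$. This is exactly why the paper passes to $\overline{\varphi}$. Without the normalization the linear bound can fail outright: take $\varphi=c$ on $[0,b)$, $\varphi(b)=c+1$, and $\varphi=+\infty$ beyond.

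With these repairs, the rest of your argument is sound and slightly more general than the paper's. You derive growth of $\varphi$ from integrability plus one-dimensional convexity of the profile alone. The paper instead uses that $\varphi$ is nondecreasing because $x\mapsto\varphi(|x|)$ is convex and even on $\mathbb{R}^n$.

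Your Step 3 substitutes $r=R+s$ and uses $R+s\le R(1+s)$ for $R\ge 1$. This is cleaner than the paper's repeated integration by parts. It gives $C=e^{yr_0-\varphi(r_0)}\sum_{k=1}^{n}\frac{(n-1)!}{(n-k)!\,y^k}$. That is exactly what the paper's computation produces once its leading term $R^{n-1}/y^n$ is corrected to $R^{n-1}/y$.
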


\begin{proof} First, we show that $\varphi$ must be a proper convex function. If it is not proper, then we may have $\varphi(r) = +\infty$ for all $r$, in which case $\int_{\mathbb{R}^d} e^{-\varphi(\vert x \vert)}dx = 0$, in contradiction that we have a probability distribution. Otherwise, we have that $\varphi(r) = -\infty$ for some $r$. In this case, we are forced to conclude that only $\varphi(0) = -\infty$, since otherwise the integral of the probability distribution would blow up. But, in order to be convex, $\varphi(r) = + \infty$ for $r>0$ again leading to a contradiction. So, $\varphi$ is proper.

By convexity and radial symmetry, the minimum of $\varphi(r)$ necessarily occurs at $\varphi(0) = \varphi_0$. The function $\varphi(r)$ is monotonically increasing. It cannot be a constant, for then the integral of the probability distribution would be infinite.

Let $\overline{\varphi}(r)$ denote the lower semi-continuous envelope of $\varphi(r)$. Since $\varphi$ may not be differentiable, we have to make our argument using the sub-differential, see Definition~\ref{def:subdifferential}. Since, $\partial \varphi(r) = \{ 0 \}$ for all $r$ iff $\varphi = \text{const}$, there exists a value $r_0$ where $\partial \overline{\varphi}(r)(r_0) \neq \{ 0 \}$ and $\partial \overline{\varphi}(r)(r_0) \neq \{ +\infty \}$. Here, we make this assertion for $\overline{\varphi}(r)$, since it is possible that $\partial \varphi(r) = \{ 0 \}$ for all $r<r_0$ and then $\partial \varphi(r) = \emptyset$ for all $r\geq r_0$, if $\varphi$ happens to not be lower semi-continuous. Let $y \in \overline{\varphi}(r_0)$ such that $y \neq 0$ and $y < \infty$. Note that since $\overline{\varphi}$ is monotonically increasing, and since a necessary and sufficient condition for the minimum of $\varphi$ is that $0 \in \varphi(r_0)$, we have that $y>0$. This choice of $y$ is possible, since $\partial \overline{\varphi}(r_0) \neq \{ 0\}$ and subdifferentials at a point may include infinity but cannot be equal to the set $\{ +\infty \}$. Given this choice, we have:
\begin{equation}
\varphi(\vert x \vert) \geq y(\vert x \vert - r_0)+ \varphi(r_0),
\end{equation}
since the tangent line of a convex function lies below the graph of the convex function. Now, we use this lower bound on $\varphi$ to estimate the following:
\begin{align}
\frac{1 - F_{\nu}(R)}{\vert \mathbb{S}^{n-1} \vert} &= \int_{R}^{\infty} r^{n-1}e^{-\varphi(r)}dr \\
&\leq e^{yr_0-\varphi(r_0)} \int_{R}^{\infty} r^{n-1}e^{-yr}dr.
\end{align}
At this point we have a straightforward computation which begins by repeated application of integration by parts. After performing integration by parts we could stop, but choose to simplify the expressions in order to easily see the dependence on $R$. We compute:
\begin{align}\label{eq:radiallogconcave}
\frac{1 - F_{\nu}(R)}{\vert \mathbb{S}^{n-1} \vert} &\leq e^{yr_0-\varphi(r_0) - yR} \left( \frac{R^{n-1}}{y^n} +\sum_{k=2}^{n} \frac{(n-1) \dots (n-k+1)}{y^{k}} R^{n-k} \right) \\
&\leq e^{yr_0-\varphi(r_0) - yR} R^{n-1} \left( \frac{1}{y^n} +\sum_{k=2}^{n} \frac{(n-1) \dots (n-k+1)}{y^{k}} \right) \\
&=\frac{e^{yr_0-\varphi(r_0) - yR} R^{n-1}}{y^n} \left( 1 + (n-1)! \sum_{k=0}^{n-2}\frac{y^k}{k!} \right) \\
&\leq \frac{e^{yr_0-\varphi(r_0) - yR} R^{n-1}}{y^n} \left( 1 + (n-1)! \left(e^y - \frac{y^{n-1}}{(n-1)!} \right) \right) \\
&=C(y, r_0 ,n) R^{n-1}e^{-yR},
\end{align}
where the second line follows from using $R \geq 1$, the third line from algebraic manipulations, and the fourth line from Taylor's theorem applied to the function $e^y$ expanded about $y = 0$.
\end{proof}

\section{Convergence Result for General Case}\label{sec:general}
In this section, which constitutes the main results of this manuscript, we are concerned with the case where the source and target measures are not assumed to be radially symmetric. We establish convergence results in this case only for the cost function $c(x,y) = \frac{1}{2}\Vert x - y \Vert^2$. In Section~\ref{sec:L2}, we introduce the setup we are considering, the fundamental $L^2$ estimate from Delalande and M\`{e}rigot~\cite{merigot1}, and the stability in the Wasserstein-2 distance of the cutoff problem. In Section~\ref{sec:W1} we show how the distance $W_1(\nu, \nu_{R})$ is estimated from above and use this to show the $L^2$ convergence rate for the cutoff method for measures $\nu$ that have finite $p$th moment and for the case when $\nu$ is a log-concave measure. The remaining part of this section is written without explicit reference to the cutoff method. This is done to show that once $L^1$ rates have been established, for whichever way $\nu$ is approximated, an entire list of very strong results follow. Notice that while our objective was to use these for studying the case where the target measure has unbounded support, these results can immediately be extended to the case where the target measure $\nu$ has compact support, as the $L^2$ bounds by Delalande and M\`{e}rigot apply to this case as well. The only difference would be in the actual explicit rates we would obtain.

In Section~\ref{sec:pointwise}, we show that the Brenier potentials converge pointwise. This then leads to showing that the optimal map converges almost everywhere. In Section~\ref{sec:explicituniform}, the main result of this section (Theorem~\ref{thm:explicitrates1}), we show that uniform explicit rates can be determined for a case where the Brenier potentials are H\"{o}lder continuous. In Section~\ref{sec:inversebrenier} we show that the same convergence for the Brenier potential more-or-less carries over to the inverse Brenier potential. And, finally, in Section~\ref{sec:inversemap}, we show that the inverse map converges almost everywhere.

\subsection{$L^2$ Convergence Rates}\label{sec:L2}

Let $\Omega \subset \mathbb{R}^n$ be a compact convex set and let $c(x,y) = \frac{1}{2}\Vert x - y \Vert^2$ and $\text{spt}(\mu) = \Omega$. Let $\text{spt}(\nu) \subset \mathbb{R}^n$ be unbounded. We further assume that $\mu$ and $\nu$ are absolutely continuous with respect to the Lebesgue measure and that $\nu \in \mathcal{P}_2(\mathbb{R}^n)$, that is, there exists a finite constant $M_2>0$ such that $\int_{\mathbb{R}^n} \vert x \vert^2 d\nu(x) \leq M < \infty$. The last assumption, in particular, guarantees the finiteness of $W_2(\mu, \nu)$. We also introduce the following property of sets:
\begin{definition}\label{def:epsilonball}
Given an closed set $A \subset \mathbb{R}^n$, we say that $A$ has the inner $\epsilon$-ball property if for a fixed $\epsilon>0$ there exists a set $A '$ such that $A = \cup_{x \in A '} \overline{B_{\epsilon}(x)}$.
\end{definition}
Notice that $\Omega$ having the $\epsilon$-ball property for some $\epsilon$ is equivalent to $\Omega$ being the Minkowski sum of some set $S \subset \mathbb{R}^n$ and $\overline{B_{\epsilon}(0)}$, that is $\Omega = S \oplus \overline{B_{\epsilon}(0)}$. In Section~\ref{sec:explicituniform}, we will derive explicit pointwise convergence rates for the case where $p>n$ and $p \geq 4$, where $p$ is the highest finite moment of $\nu$. In order to derive such rates, we will impose that there exists an $\epsilon>0$ such that $\Omega$ satisfies the inner $\epsilon$-ball property. We will also establish analogous results for the case where $\Omega$ is rectangular.

The convergence results in this section are made possible by the important $L^2$ stability results established in Delalande and M\'{e}rigot~\cite{merigot1}. These will lead to explicit pointwise convergence rates that we establish in this section, because we are approximating the target measure $\nu$ by the cutoff approximation. In the case that the $L^2$ convergence rates of Delalande and M\'{e}rigot do not apply we still provide theorems guaranteeing pointwise convergence in anticipation that such $L^2$ stability results can and will be made explicit in the future.

For this section, we settle the choice of the arbitrary constant for the Brenier potential $\phi_{\mu}$ by requiring that $\int_{\Omega} \phi_{\mu}(x)dx = 0$. The main result from Delalande and M\`{e}rigot~\cite{merigot1} that we will use is the following $L^2$ stability result, adapted to the context of the present manuscript:
\begin{theorem}[Delalande and M\`{e}rigot]\label{thm:delalandemerigot}
Let $p$ denote the highest finite moment of $\mu$ and $\nu$, and let the value of this moment be denoted as $M_p$ and consider $\mu$ and $\nu$ for which $p>n$ and $p \geq 4$. Let the density function of $\mu$ be bounded below and above by constants. Then,
\begin{align}\label{eq:L2convergence}
\left\Vert T_{\mu} - T_{\mu;R} \right\Vert_{L^2 \left( \mu \right)} &\leq C_{n, p, \overline{\Omega}, \mu, M_{p}}  W_{1}(\nu, \nu_{R})^{\frac{p}{6p+16n}}, \\
\left\Vert \phi_{\mu}- \phi_{\mu;R} \right\Vert_{L^2 \left( \mu \right)} &\leq C_{n, p, \overline{\Omega}, \mu, M_{p}}  W_{1}(\nu, \nu_{R})^{1/2}.
\end{align}
\end{theorem}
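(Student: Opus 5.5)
The plan is to obtain the statement as a direct specialization of the general quantitative stability theorem of Delalande and M\`{e}rigot~\cite{merigot1}, applied to the pair of target measures $\nu_1 = \nu$ and $\nu_2 = \nu_R$. Their result is stated for a fixed source $\mu$ supported on a compact convex set $\overline{\Omega}$ with density bounded above and below. It covers target measures in $\mathcal{P}_p(\mathbb{R}^n)$ with $p>n$ and $p\geq 4$, and its constant depends only on $n$, $p$, $\overline{\Omega}$, $\mu$ and an upper bound on the $p$th moments of the two targets. The work therefore consists of checking that the cutoff measure $\nu_R$ fits into this framework with constants uniform in $R$, and that the normalization of the potentials matches.

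First, I will verify the hypotheses on the targets. Since $\nu_R = \nu\,\mathbf{1}_{B_R(0)}/\nu(B_R(0))$, its $p$th moment is
\begin{equation*}
\frac{1}{\nu(B_R(0))}\int_{B_R(0)} \vert x\vert^p \, d\nu(x) \leq \frac{M_p}{\nu(B_R(0))}.
\end{equation*}
Restricting to $R \geq R_0$, where $R_0$ is the smallest radius with $\nu(B_{R_0}(0)) \geq 1/2$ (as in Proposition~\ref{prop:radialmap}), gives the uniform bound $2M_p$. Both $\nu$ and $\nu_R$ are absolutely continuous and lie in $\mathcal{P}_2$, so Theorem~\ref{thm:brenier} yields well-defined maps $T_\mu$, $T_{\mu;R}$ and convex potentials $\phi_\mu$, $\phi_{\mu;R}$. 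These are unique on $\Omega$ once we impose the normalization $\int_\Omega \phi\,dx = 0$, which is the one used in~\cite{merigot1}.

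Second, I will quote the two estimates. The map estimate, with exponent $p/(6p+16n)$, comes from their strong-convexity bound for the Kantorovich functional $\phi\mapsto\int\phi\,d\mu+\int\phi^*\,d\nu$ on the class of normalized convex potentials. The potential estimate, with exponent $1/2$, comes from the same functional combined with a Poincar\'{e}--Wirtinger type inequality on the convex domain $\Omega$. In both estimates the dependence on the targets enters only through $W_1(\nu,\nu_R)$ and through the $p$th-moment bound. That bound is now $\max\{M_p,2M_p\}=2M_p$, so the constant can be written as $C_{n,p,\overline{\Omega},\mu,M_p}$.

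The main obstacle is confirming that the version of the Delalande--M\`{e}rigot theorem for unbounded targets really applies with exactly these exponents. In particular, I need to check that the moment condition ($p>n$, $p\geq4$) is what produces the exponent $p/(6p+16n)$, and that their potential bound is stated in $L^2(\mu)$ rather than $L^2$ of Lebesgue measure. Since the density of $\mu$ is bounded above and below on $\Omega$, the two norms are equivalent, so any discrepancy is absorbed into the constant. If their bound is phrased through the $W_1$ distance to a common reference, I would instead invoke it twice and use the triangle inequality, again only changing the constant.
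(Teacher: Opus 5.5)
Your proposal matches what the paper does: the statement is not proved there but quoted from Delalande and M\`{e}rigot~\cite{merigot1} and specialized to the pair of targets $\nu,\nu_R$. The only real content is therefore the hypothesis check you carry out, and you are right that the normalization and $L^2(\mu)$-versus-Lebesgue issues only affect the constant. One small simplification: you do not need the restriction $R\geq R_0$. Writing $a=\int_{B_R(0)}\vert x\vert^p\,d\nu$ and $q=\nu(B_R(0))$, you have $a\leq R^p q$, so $M_p\geq a+R^p(1-q)\geq a+(a/q)(1-q)=a/q$. Hence the $p$th moment of $\nu_R$ is at most $M_p$ for every $R$, uniformly and not only for large $R$.
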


Even without the $L^2$ bounds, we can show that $W_2(\mu, \nu_{R}) \rightarrow W_2(\mu, \nu)$ only requiring the very weak assumption that $\nu \in \mathcal{P}_{2}(\mathbb{R}^n)$ which assures the finiteness of the Wasserstein-2 distance for this problem.

\begin{proposition}
For $p \geq 2$, we have that:
\begin{equation}
W_2(\mu, \nu_{R}) \rightarrow W_2(\mu, \nu),
\end{equation}
where $W_2(\mu, \nu)$ is the Wasserstein-2 distance between $\mu$ and $\nu$ supported on $\mathbb{R}^n$.
\end{proposition}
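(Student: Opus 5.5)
Since $W_2$ is a metric on $\mathcal{P}_2(\mathbb{R}^n)$, the triangle inequality gives
\begin{equation*}
\left\vert W_2(\mu, \nu_R) - W_2(\mu, \nu) \right\vert \leq W_2(\nu, \nu_R).
\end{equation*}
The plan is therefore to show directly that $W_2(\nu, \nu_R) \rightarrow 0$ as $R \rightarrow \infty$, using only $\nu \in \mathcal{P}_2(\mathbb{R}^n)$. Here $\nu_R$ is understood as the normalized restriction $\nu_R = \nu|_{B_R(0)}/\nu(B_R(0))$. Both $\nu$ and $\nu_R$ have finite second moments, so all distances involved are finite.

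To bound $W_2(\nu, \nu_R)$, I construct an explicit (generally non-optimal) coupling $\gamma_R$. Set $a_R := \nu(B_R(0))$, which tends to $1$. The coupling keeps the common mass in place and moves only the excess. Note that
\begin{equation*}
\nu_R - \nu|_{B_R(0)} = \left(\frac{1}{a_R} - 1\right)\nu|_{B_R(0)},
\end{equation*}
and this measure has total mass $1 - a_R = \nu(B_R(0)^c)$. Define
\begin{equation*}
\gamma_R := (\mathrm{id},\mathrm{id})_{\#}\,\nu|_{B_R(0)} + \frac{1}{1-a_R}\,\nu|_{B_R(0)^c} \otimes \left(\frac{1}{a_R}-1\right)\nu|_{B_R(0)}.
\end{equation*}
Its first marginal is $\nu|_{B_R} + \nu|_{B_R^c} = \nu$, and its second marginal is $\nu|_{B_R} + (1/a_R - 1)\nu|_{B_R} = \nu_R$. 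The diagonal part costs nothing. For the product part, use $\vert x-y\vert^2 \leq 2\vert x\vert^2 + 2\vert y\vert^2$, so its cost is at most
\begin{equation*}
2\int_{B_R^c}\vert x\vert^2\, d\nu(x) + 2\left(\frac{1}{a_R}-1\right)\int_{B_R}\vert y\vert^2\, d\nu(y) \leq 2\int_{B_R^c}\vert x\vert^2\, d\nu + 2\,\frac{1-a_R}{a_R}\,M_2.
\end{equation*}

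Both terms tend to zero. The first is the tail of the convergent integral $\int \vert x\vert^2 d\nu$, so it vanishes by dominated convergence. The second vanishes because $a_R \rightarrow 1$, for instance via the Markov bound $1-a_R \leq M_2/R^2$. Hence $W_2(\nu,\nu_R)^2 \leq \int \vert x-y\vert^2 d\gamma_R \rightarrow 0$, which proves the proposition.

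The only point needing care is checking that $\gamma_R$ has the correct marginals and handling the degenerate case $a_R = 1$, in which case $\nu_R = \nu$ and the claim is trivial. No further obstacle is expected. If rates were wanted, the same bound gives $W_2(\nu,\nu_R)^2 \lesssim \int_{B_R^c}\vert x\vert^2 d\nu$, which is $O(R^{2-p})$ under a finite $p$th moment by the argument of Lemma~\ref{lemma:pmoment}.
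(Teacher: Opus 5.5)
Your proof is correct, and it takes a different route from the paper. The paper's argument is abstract. It checks that $\nu_R \rightharpoonup \nu$ weakly and that $\nu$ has a finite second moment about a point $x_0 \in \Omega$, and then invokes Theorem 6.9 in Villani ($W_2$ metrizes weak convergence in $\mathcal{P}_2$).

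You instead use the triangle inequality to reduce to showing $W_2(\nu,\nu_R) \to 0$. You then bound this directly with an explicit coupling: the identity on $B_R(0)$, plus the product coupling between the outside mass $\nu|_{B_R(0)^c}$ and the excess $(1/a_R - 1)\,\nu|_{B_R(0)}$. This is essentially the $W_2$ analogue of the transport map the paper itself builds for $W_1$ in Lemma~\ref{lemma:firstbound}. Your marginal and cost computations are right.

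What your route buys:
\begin{itemize}
\item It is self-contained.
\item It does not need the convergence of second moments $\int |x-x_0|^2\,d\nu_R \to \int |x-x_0|^2\,d\nu$, which Villani's theorem requires for convergence in $\mathcal{P}_2$. The paper asserts this convergence but only verifies finiteness of the limit's moment (and its weak-convergence display omits the restriction to $B_R(0)$). Both points are easily fixed, but your argument never needs them.
\item It gives a quantitative bound, $W_2(\nu,\nu_R)^2 \le 2\int_{B_R(0)^c} |x|^2\,d\nu + 2\,\frac{1-a_R}{a_R}\,M_2$, which is $O(R^{2-p})$ for $p>2$.
\end{itemize}

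What the paper's route buys is brevity and generality. It applies verbatim to any approximating sequence converging to $\nu$ in $\mathcal{P}_2$, not just the normalized cutoff, but it gives no rate.
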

\begin{proof}
The proof follows from Theorem 6.9 in Villani~\cite{villani2}, once we have shown that $\nu_{R} \rightarrow \nu$ weakly in $\mathcal{P}_{2}(\mathbb{R}^n)$ and that $\int_{\mathbb{R}^n} \frac{1}{2} \Vert x_0 - x \Vert^2 d\nu(x) <\infty$ for any fixed $x_0 \in \Omega$. For any $g \in C_{b}(\mathbb{R}^n)$, we have that:
\begin{equation}\label{eq:weakconv}
\int_{\mathbb{R}^n} g(x) d\nu_{R}(x) = \frac{1}{\nu(B_{R}(0))} \int_{\mathbb{R}^n} g(x) d\nu(x) \rightarrow \int_{\mathbb{R}^n} g(x) d\nu(x),
\end{equation}
since $\nu(B_{R}(0)) \rightarrow 1$ as $R \rightarrow \infty$. Therefore, $\nu_R \rightarrow \nu$ weakly. Fixing $x_0 \in \Omega$, we compute:
\begin{align}
\int_{\mathbb{R}^n} \frac{1}{2} \Vert x_0 - x \Vert^2 d\nu(x) &= \int_{\mathbb{R}^n} \frac{1}{2} \Vert x_0 \Vert^2 d\nu(x)  + \int_{\mathbb{R}^n} \frac{1}{2} \Vert x \Vert^2 d\nu(x)  \\
 &+ \int_{\mathbb{R}^n } x_0 \cdot x d\nu(x) \\
&\leq \frac{1}{2} \Vert x_0 \Vert^2 + \int_{\mathbb{R}^n} \frac{1}{2} \Vert x \Vert^2 d\nu(x) + \int_{\mathbb{R}^{n}} \Vert x_0 \Vert \Vert x \Vert d\nu(x) \\
&\leq \frac{1}{2} \Vert x_0 \Vert^2 + \int_{\mathbb{R}^n} \frac{1}{2} \Vert x \Vert^2 d\nu(x) + \int_{\vert x \vert \leq 1} \Vert x_0 \Vert d\nu(x) \\
&+ \int_{\vert x \vert \geq 1} \Vert x_0 \Vert \Vert x \Vert^2 d\nu(x) \\
&<\infty.
\end{align}
\end{proof}

\subsection{Estimating $W_{1}(\nu, \nu_{R})$}\label{sec:W1}

We estimate the quantity $W_{1}(\nu, \nu_{R})$ in two cases. The first is under the assumption that $\nu$ has a largest finite moment $p$. The second assumption is that $\nu$ is a log-concave distribution, which will lead to exponential convergence of $W_{1}(\nu, \nu_{R})$. First, we define a quantity which will serve as the natural generalization of the cumulative distribution function and which will, in the cases considered in this manuscript, control the rate of convergence of $W_{1}(\nu, \nu_R)$ to zero. We begin with the following lemma, which covers the most general case, where $\nu$ is simply a probability distribution in $\mathbb{R}^n$.
\begin{lemma}\label{lemma:firstbound}
Let $\nu \in \mathcal{P}(\mathbb{R}^n)$. Then, the following bounds the $W_1$ distance between $\nu$ and $\nu_R$:
\begin{equation}\label{eq:generalbound}
W_1(\nu, \nu_R) \leq \int_{\mathbb{R}^n \setminus B_{R}(0)} \vert x \vert d\nu(x) + R(1-\nu(B_{R}(0))).
\end{equation}
\end{lemma}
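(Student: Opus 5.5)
To bound $W_1(\nu,\nu_R)$ we will not solve the transport problem between $\nu$ and $\nu_R$. Instead we write down one admissible coupling $\gamma\in\Pi(\nu,\nu_R)$ and estimate its cost $\int |x-y|\,d\gamma$. Since $W_1$ is the infimum over couplings, any explicit coupling gives an upper bound. Throughout, $\nu_R$ denotes the normalized restriction $\nu|_{B_R(0)}/\nu(B_R(0))$, i.e.\ $\nu_R\le \nu/\nu(B_R(0))$ as measures. Set $a:=\nu(B_R(0))$; we assume $a>0$, since otherwise $\nu_R$ is undefined.

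\textbf{Coupling.} The measure $\nu_R$ dominates $\nu$ on $B_R(0)$:
\begin{equation*}
\nu_R-\nu|_{B_R(0)}=\Big(\tfrac1a-1\Big)\nu|_{B_R(0)}=:\sigma\ \ge 0,
\end{equation*}
and $\sigma$ has total mass $1-a$. We therefore keep the common part $\nu|_{B_R(0)}$ in place (identity coupling, zero cost). We then transport the outer mass $\nu|_{\mathbb{R}^n\setminus B_R(0)}$, which also has mass $1-a$, onto $\sigma$ using the product coupling
\begin{equation*}
\gamma_{\mathrm{out}}=\frac{1}{1-a}\,\nu|_{\mathbb{R}^n\setminus B_R(0)}\otimes\sigma
\end{equation*}
(omitted if $a=1$). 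The full coupling is $\gamma=(\mathrm{id},\mathrm{id})_\#\nu|_{B_R(0)}+\gamma_{\mathrm{out}}$. Its marginals are $\nu|_{B_R}+\nu|_{\mathbb{R}^n\setminus B_R}=\nu$ and $\nu|_{B_R}+\sigma=\nu_R$, so $\gamma\in\Pi(\nu,\nu_R)$.

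\textbf{Cost estimate.} For $x$ outside $B_R(0)$ and $y\in B_R(0)$, the triangle inequality gives $|x-y|\le |x|+|y|\le |x|+R$. Integrating this against $\gamma_{\mathrm{out}}$, whose first marginal is $\nu|_{\mathbb{R}^n\setminus B_R}$ and whose total mass is $1-a$, yields
\begin{equation*}
W_1(\nu,\nu_R)\le\int_{\mathbb{R}^n\setminus B_R(0)}|x|\,d\nu(x)+R\,(1-\nu(B_R(0))),
\end{equation*}
which is exactly \eqref{eq:generalbound}. If $\nu$ has no finite first moment, the right-hand side is $+\infty$ and the bound holds trivially.

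\textbf{Main obstacle.} The delicate step is fixing the interpretation of $\nu_R$. The paper's notation $\nu/\nu(B_R(0))$ has to be read as the normalized restriction to $B_R(0)$; with that reading, the marginal verification above is routine. As a cross-check, Kantorovich--Rubinstein duality gives the same result: for $1$-Lipschitz $g$ normalized so that $g(0)=0$, the difference $\int g\,d\nu-\int g\,d\nu_R$ splits into the same two pieces and is bounded the same way.
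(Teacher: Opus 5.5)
Your proof is correct and follows the same approach as the paper: keep $\nu$ in place on $B_R(0)$, send the outer mass $\nu|_{\mathbb{R}^n\setminus B_R(0)}$ onto the excess $\nu_R-\nu|_{B_R(0)}$, and bound the cost pointwise by $|x|+R$. The one difference is that you use a product coupling where the paper uses a measure-preserving map $M$, whose existence it justifies by $\nu$ having a density. Your version therefore needs no existence result for transport maps, so it holds for every $\nu\in\mathcal{P}(\mathbb{R}^n)$ with $\nu(B_R(0))>0$, exactly as the lemma is stated, and it also avoids the mass mismatch between the paper's $\tilde\nu$ and $\tilde\nu_R$.
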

\begin{proof}
For any $S$ satisfying $S_{\#}\nu = \nu_R$, we have the bound $W_1(\nu, \nu_{R}) \leq \int_{\mathbb{R}^n} \vert x - S(x) \vert d\nu(x)$ . Now, we construct a specific pushforward map $S$. Define the probability measure to be the the unique measure $\tilde{\nu}$ satisfying $d\tilde{\nu}(x) := d\nu/(1-\nu(B_{R}(0)))$ for $x \in \mathbb{R}^d \setminus B_{R}(0)$ and $d \tilde{\nu}(x) = 0$ for $x \in B_{R}(0)$. Also, define $\tilde{\nu}_{R}$ where $d \tilde{\nu}_{R}(x) = d\nu_R(x) - d\nu(x)$ for $x \in B_{R}(0)$ and $d\tilde{\nu}_{R} = 0$ for $x \in \mathbb{R}^d \setminus B_{R}(0)$. Let $M(x)$ be any measure preserving mapping between $\tilde{\nu}$ and $\tilde{\nu}_{R}$, for example an optimal transport map, which is guaranteed to exists since $\nu$ has a density function. Then, we define the map $S(x)$ as follows:
\begin{equation}
S(x):= \begin{cases}
x, & x \in B_{R}(0), \\
M(x), & x \in \mathbb{R}^n \setminus B_{R}(0).
\end{cases}
\end{equation}
Notice that $S(x)$ satisfies $S_{\#}\nu = \nu_R$. With these definitions, we obtain the simple estimate:

\begin{align}
W_{1}(\nu, \nu_{R}) &\leq \int_{\mathbb{R}^d} \vert x-S(x) \vert d\nu(x) \\
&= \int_{\mathbb{R^d} \setminus B_{R}(0)} \vert x- M(x)\vert d\nu(x) \\
&\leq \int_{\mathbb{R^d} \setminus B_{R}(0)} (\left\vert x \right\vert+R)d\nu(x) \\
&= \int_{\mathbb{R^d} \setminus B_{R}(0)} \left\vert x \right\vert d\nu(x) + R(1-\nu(B_{R}(0)).
\end{align}
\end{proof}

Defining the $n$-cube as $C_{1}(0) := \{x \in \mathbb{R}^n : -1 \leq x_1 \leq 1, \dots, -1 \leq x_n \leq 1 \}$ and
\begin{equation}
d \nu_{R_{C}} := \begin{cases}
d \nu(x) / \nu(RC_1(0)), & \text{if} \ x \in RC_1(0) \\
0, & \text{otherwise},
\end{cases}
\end{equation}
then we have the following result:
\begin{lemma}
Let $\nu \in \mathcal{P}(\mathbb{R}^n)$. Then, the following bounds the $W_1$ distance between $\nu$ and $\nu_{R_{C}}$:
\begin{equation}\label{eq:generalbound}
W_1(\nu, \nu_{R_{C}}) \leq \int_{\mathbb{R}^n \setminus RC_{1}(0)} \vert x \vert d\nu(x) + \sqrt{n} R(1-\nu(RC_1(0))).
\end{equation}
\end{lemma}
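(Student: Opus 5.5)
The plan is to repeat the construction in the proof of Lemma~\ref{lemma:firstbound}, replacing the ball $B_R(0)$ by the cube $RC_1(0)$. Every point of $RC_1(0)$ has Euclidean norm at most $\sqrt{n}R$, and this is the source of the factor $\sqrt{n}$.

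\textbf{Coupling.} I will build a map $S$ with $S_{\#}\nu = \nu_{R_C}$ that leaves the mass inside the cube fixed and moves only the outside mass. Set $q := 1-\nu(RC_1(0))$ and define two probability measures:
\begin{itemize}
\item $\tilde\nu := \nu|_{\mathbb{R}^n\setminus RC_1(0)}/q$, the normalized outside part of $\nu$;
\item $\tilde\nu_{R_C} := (\nu_{R_C}-\nu)|_{RC_1(0)}/q$, the normalized excess mass that $\nu_{R_C}$ places inside the cube.
\end{itemize}
The second measure is nonnegative because $\nu_{R_C} = \nu/\nu(RC_1(0)) \ge \nu$ on $RC_1(0)$. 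Its total mass is $(1-\nu(RC_1(0)))/q = 1$.

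Since $\nu$ has a density, $\tilde\nu$ is absolutely continuous. Brenier's theorem (Theorem~\ref{thm:brenier}) then gives a measure-preserving map $M$ with $M_{\#}\tilde\nu = \tilde\nu_{R_C}$; both measures have finite second moment whenever $\nu$ does, and otherwise any Borel pushforward map suffices. Define
\begin{equation*}
S(x) := \begin{cases} x, & x \in RC_1(0), \\ M(x), & x \notin RC_1(0). \end{cases}
\end{equation*}
Checking $S_{\#}\nu = \nu_{R_C}$ reduces to the identity $\nu|_{RC_1(0)} + q\,\tilde\nu_{R_C} = \nu_{R_C}$.

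\textbf{Estimate.} Since $S$ is admissible,
\begin{equation*}
W_1(\nu,\nu_{R_C}) \le \int_{\mathbb{R}^n\setminus RC_1(0)} |x - M(x)|\, d\nu(x).
\end{equation*}
The map $M$ takes values in $RC_1(0)$, so $|M(x)| \le \sqrt{n}R$. The triangle inequality gives $|x-M(x)| \le |x| + \sqrt{n}R$, and integrating yields
\begin{equation*}
W_1(\nu,\nu_{R_C}) \le \int_{\mathbb{R}^n\setminus RC_1(0)} |x|\, d\nu(x) + \sqrt{n}R\,(1-\nu(RC_1(0))).
\end{equation*}

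\textbf{Main obstacle.} The only delicate point is existence of the map $M$ when $\nu$ lacks a finite second moment. I will avoid this by working with couplings instead of maps. Let $\gamma := (\mathrm{id},\mathrm{id})_{\#}\nu|_{RC_1(0)} + q\,(\tilde\nu\otimes\tilde\nu_{R_C})$; its marginals are $\nu$ and $\nu_{R_C}$. The same cost estimate holds for $\gamma$ verbatim, so the bound follows without any regularity assumptions on $\nu$.
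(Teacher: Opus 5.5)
Your proof is correct and follows the paper's route. The paper just says the argument of Lemma~\ref{lemma:firstbound} carries over: keep the mass inside the cube fixed, push the outside mass onto the excess $\nu_{R_C}-\nu$ on the cube, and use $|x-M(x)|\le |x|+\sqrt{n}R$. Normalizing both auxiliary measures by $q$ is right (the paper's ball version leaves $\tilde\nu_R$ unnormalized), and your product coupling $\gamma$ removes any need for a transport map or a density; just set aside the trivial case $q=0$, where $\nu_{R_C}=\nu$, before dividing by $q$.
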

The proof follows exactly in the same way as the proof for Lemma~\ref{lemma:firstbound}. In this section, we have assumed that $\nu \in \mathcal{P}_2(\mathbb{R}^n)$. We thus generalize the result from Lemma~\ref{lemma:firstbound} to the case where $\nu$ has a finite $p$th moment.

\begin{proposition}\label{lemma:bounds1}
Let $p$ denote the highest finite moment of $\nu$. Then, we have:
\begin{equation}
W_1 (\nu, \nu_{R}) \leq 2M_p R^{1 - p},
\end{equation}
where $M_p$ is the $p$th moment of $\nu$.
\end{proposition}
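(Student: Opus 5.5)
The plan is to start from Lemma~\ref{lemma:firstbound}, which gives
\begin{equation*}
W_1(\nu, \nu_R) \leq \int_{\mathbb{R}^n \setminus B_{R}(0)} \vert x \vert \, d\nu(x) + R\bigl(1-\nu(B_{R}(0))\bigr),
\end{equation*}
and to bound each of the two terms by $M_p R^{1-p}$ using a Markov/Chebyshev-type tail estimate. Here $M_p := \int_{\mathbb{R}^n} \vert x \vert^p \, d\nu(x)$ is finite by assumption. Radial symmetry is not needed, unlike in Lemma~\ref{lemma:pmoment}.

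For the first term, on the region $\vert x \vert \geq R$ we have $\vert x \vert = \vert x \vert^{p} \vert x \vert^{1-p} \leq R^{1-p} \vert x \vert^p$, since $p \geq 1$ makes $t \mapsto t^{1-p}$ nonincreasing. Integrating over $\mathbb{R}^n \setminus B_R(0)$ and then enlarging the domain to all of $\mathbb{R}^n$ gives
\begin{equation*}
\int_{\mathbb{R}^n \setminus B_R(0)} \vert x \vert \, d\nu(x) \leq R^{1-p} M_p .
\end{equation*}

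For the second term, on the same region $1 \leq \vert x \vert^p / R^p$. Hence
\begin{equation*}
1-\nu(B_R(0)) = \nu\bigl(\mathbb{R}^n \setminus B_R(0)\bigr) \leq R^{-p} M_p ,
\end{equation*}
and multiplying by $R$ yields $R\bigl(1-\nu(B_R(0))\bigr) \leq M_p R^{1-p}$. Adding the two bounds gives $W_1(\nu,\nu_R) \leq 2 M_p R^{1-p}$, which is the claimed estimate.

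There is no real obstacle here. The only point needing care is that the exponent comparison requires $p \geq 1$, which holds in this section since $\nu \in \mathcal{P}_2$ means $p \geq 2$. One should also note that $M_p$ denotes the $p$th absolute moment about the origin, consistent with how the cutoff ball $B_R(0)$ is centered. The bound holds for every $R>0$, although it is only informative for large $R$.
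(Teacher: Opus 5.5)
Your proof is correct and has the same skeleton as the paper's: start from Lemma~\ref{lemma:firstbound} and bound the tail mass by Markov's inequality, $1-\nu(B_R(0))\le M_pR^{-p}$, which handles the $R\bigl(1-\nu(B_R(0))\bigr)$ term. The only difference is the first term. The paper applies H\"older's inequality to get $\int_{\mathbb{R}^n\setminus B_R(0)}\vert x\vert\,d\nu\le M_p^{1/p}\bigl(1-\nu(B_R(0))\bigr)^{1/q}$, feeds in the Markov bound, and uses $p/q=p-1$ to reach $M_pR^{1-p}$. Your pointwise inequality $\vert x\vert\le R^{1-p}\vert x\vert^p$ on $\{\vert x\vert\ge R\}$ gets the same bound in one line, without needing a conjugate exponent. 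If you keep the integral over $\{\vert x\vert\ge R\}$ instead of enlarging it to $\mathbb{R}^n$, your bound also shows at once that $M_p$ can be replaced by the vanishing tail moment $\int_{\{\vert x\vert\ge R\}}\vert x\vert^p\,d\nu$.
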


\begin{proof}
Since the $p$th moment is bounded, a direct application of H\"{o}lder's inequality yields:
\begin{align}
\int_{\mathbb{R}^d \setminus B_{R}(0)} \left\vert x \right\vert d\nu(x) &\leq (1-\nu(B_{R}(0)))^{1/q}\left( \int_{\mathbb{R}^d \setminus B_{R}(0)} \left\vert x \right\vert^p d\nu(x) \right)^{1/p} \\
&\leq (M_p)^{1/p} (1-\nu(B_{R}(0)))^{1/q},
\end{align}
where $q$ is the conjugate of $p$. This bound can, of course be tightened, since the finiteness of the integral $\int_{\mathbb{R}^d} \left\vert x \right\vert^p d\mu(x)$ implies $\tilde{M}_{p}(R) := \int_{\mathbb{R}^d \setminus B_{R}(0)} \left\vert x \right\vert^p d\mu(x) \rightarrow 0$ as $R \rightarrow \infty$. Thus, by Lemma~\ref{lemma:firstbound}, we have the estimate:
\begin{equation}
W_1(\nu, \nu_{R}) \leq R(1 - \nu(B_{R}(0))) + (M_p)^{1/p}(1 - \nu(B_{R}(0)))^{1/q}.
\end{equation}
If there exists a constant $C(\nu)$ such that for large enough $R$ we have $(1 - \nu(B_{R}(0))) \leq C(\nu) R^{-p}$, then the term $(1-\nu(B_{R}(0)))^{1/q}$ dominates and we get:
\begin{align}\label{eq:rp}
W_1(\nu, \nu_{R}) &\leq (1 - \nu(B_{R}(0)))^{1/q} \left((M_p)^{1/p} + R(1-\nu(B_{R}(0)))^{1/p} \right) \\
&= (1 - \nu(B_{R}(0)))^{1/q} \left((M_p)^{1/p} + (C(\nu))^{1/p} \right).
\end{align}
We now apply Markov's inequality to show that, in fact, $1 - \nu(B_{R}(0)) \leq M_{p} R^{-p}$, i.e. Inequality~\eqref{eq:rp} holds. Let the random variable $X$ have the law $\mu$. Then,
\begin{align}\label{eq:Markov}
\int_{\mathbb{R}^n \setminus B_{R}(0)} d\mu(x) &= \mathbb{P}[\vert X \vert \geq R], \\
&= \mathbb{P}[\vert X \vert^{p} \geq R^{p}] \\
&\leq \mathbb{E}[X^p]R^{-p} \\
&= M_p R^{-p}.
\end{align}
Thus, we see that
\begin{align}
W_{1}(\nu, \nu_{R}) &\leq 2M_p^{1/p} M_{p}^{1/q} R^{-p/q} \\
&= 2M_p R^{1 - p}.
\end{align}
\end{proof}

We also have the following result for the $n$-cube:
\begin{proposition}
Let $p$ denote the highest finite moment of $\nu$. Then, we have:
\begin{equation}
W_1 (\nu, \nu_{R_{C}}) \leq 2M_pR^{1 - p}.
\end{equation}
\end{proposition}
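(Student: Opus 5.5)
The plan is to repeat the proof of Proposition~\ref{lemma:bounds1}, with the ball $B_{R}(0)$ replaced by the cube $RC_1(0)$. The starting point is the cube analogue of Lemma~\ref{lemma:firstbound} stated just above:
\begin{equation*}
W_1(\nu, \nu_{R_{C}}) \leq \int_{\mathbb{R}^n \setminus RC_{1}(0)} \vert x \vert d\nu(x) + \sqrt{n} R(1-\nu(RC_1(0))).
\end{equation*}
The two terms will be estimated separately using only the finiteness of the $p$th moment $M_p$.

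For the first term, I apply H\"{o}lder's inequality with exponents $p$ and $q = p/(p-1)$ exactly as before:
\begin{equation*}
\int_{\mathbb{R}^n \setminus RC_{1}(0)} \vert x \vert d\nu(x) \leq M_p^{1/p}\,(1-\nu(RC_1(0)))^{1/q}.
\end{equation*}
For the tail mass, the key geometric observation is that $B_{R}(0) \subset RC_1(0)$. Hence $1-\nu(RC_1(0)) \leq 1-\nu(B_R(0))$, and the Markov inequality bound \eqref{eq:Markov} gives $1-\nu(RC_1(0)) \leq M_p R^{-p}$. Substituting, the first term is at most $M_p^{1/p} M_p^{1/q} R^{-p/q} = M_p R^{1-p}$. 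The second term is at most $\sqrt{n}\, M_p R^{1-p}$.

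The main obstacle is the factor $\sqrt{n}$ in the second term. A naive combination yields only $(1+\sqrt{n}) M_p R^{1-p}$, not the claimed $2M_pR^{1-p}$. To remove it, I would go back to the construction in the proof of Lemma~\ref{lemma:firstbound} and bound $\vert x - M(x)\vert$ more carefully. The excess mass is sent to points $M(x) \in RC_1(0)$ from points $x$ with $\vert x\vert_\infty \geq R$.

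The first refinement I would try is $\vert x - M(x)\vert \leq \vert x\vert + \vert M(x)\vert$ with $\vert M(x)\vert \leq \sqrt{n}R \leq \sqrt{n}\vert x\vert$. This gives a bound of the form $(1+\sqrt{n})\int_{\mathbb{R}^n\setminus RC_1(0)}\vert x\vert\,d\nu$, which is no better. So if the constant $2$ is to hold exactly, the argument must instead use that $\vert M(x)\vert$ can be taken no larger than $\vert x\vert$. This is possible by choosing $M$ radially monotone, which keeps $\vert x - M(x)\vert \leq 2\vert x\vert$. Then H\"{o}lder's inequality and Markov's inequality give $2M_p^{1/p}M_p^{1/q}R^{1-p} = 2M_pR^{1-p}$.

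If such a choice of $M$ cannot be justified, I would settle for the statement with constant $(1+\sqrt{n})M_p$ in place of $2M_p$. This has the same rate $R^{1-p}$, which is all that is used later with Theorem~\ref{thm:delalandemerigot}.
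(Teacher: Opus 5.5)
Your first half is exactly the paper's route. The paper only says that the ball argument of Proposition~\ref{lemma:bounds1} goes through \emph{mutatis mutandis}, adding $\nu(\mathbb{R}^n\setminus RC_1(0))\le\nu(\mathbb{R}^n\setminus B_R(0))$. Your diagnosis is correct and sharper than the paper's. Carried out honestly, that route gives only $(1+\sqrt{n})M_pR^{1-p}$, because the cube lemma carries $\sqrt{n}R$ where the ball lemma carries $R$; the paper silently drops the $\sqrt{n}$.

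The step that fails is your repair: a radially monotone $M$ need not satisfy $\vert M(x)\vert\le\vert x\vert$. The map $M$ must push $\nu$ restricted to $\mathbb{R}^n\setminus RC_1(0)$ onto the excess $\frac{\epsilon}{1-\epsilon}\nu|_{RC_1(0)}$, where $\epsilon:=1-\nu(RC_1(0))$. A coupling with $\vert y\vert\le\vert x\vert$ exists only if the radial law of the outside mass stochastically dominates the radial law of $\nu$ inside the cube. Nothing forces this. If the inside mass sits near the corners ($\vert y\vert\approx\sqrt{n}R$) and the outside mass sits just beyond the face centres ($\vert x\vert\approx R$), every coupling increases norms.

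In fact, for $n\ge 2$ no argument yields the constant $2$ for all $R$, so the statement as written is false. Take $n=2$, $R=1$, $p=4$ and $\nu\approx(1-\epsilon)\delta_{(1,1)}+\epsilon\delta_{(-1,0)}$. Smooth it into tiny balls (one inside the square at the corner, one just outside the left edge) and add a negligible tail with finite fourth and infinite fifth moment. Then $\nu_{R_C}\approx\delta_{(1,1)}$, so $W_1(\nu,\nu_{R_C})\approx\sqrt{5}\,\epsilon$, while $2M_4R^{-3}\approx 2(4-3\epsilon)$. At $\epsilon=0.99$ these are about $2.21$ and $2.06$.

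So your fallback $(1+\sqrt{n})M_pR^{1-p}$ is the correct general statement, and your derivation of it is fine. The constant $2$ does survive when $\nu(RC_1(0))\ge 1/2$, e.g.\ for $R\ge(2M_p)^{1/p}$. It comes from the moment of the mass \emph{inside} the cube, not from a pointwise bound on $\vert M(x)\vert$:
\begin{itemize}
\item Routing through the origin gives $W_1\le\int_{\mathbb{R}^n\setminus RC_1(0)}\vert x\vert\,d\nu+\epsilon\bar{y}$, with $\bar{y}$ the normalized $\nu$-average of $\vert y\vert$ over the cube.
\item The first term is at most $R^{1-p}\int_{\mathbb{R}^n\setminus RC_1(0)}\vert x\vert^p\,d\nu$, because $\vert x\vert>R$ there.
\item If $\bar{y}\le R$, the second term is at most $\epsilon R$, which is bounded by the same quantity.
\item If $\bar{y}>R$, Jensen's inequality gives $\epsilon\bar{y}\le(1-\epsilon)\bar{y}\le R^{1-p}\int_{RC_1(0)}\vert y\vert^p\,d\nu$.
\end{itemize}
In either case the total is at most $2M_pR^{1-p}$.
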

\begin{proof}
The proof follows in the same way as the proof for Proposition~\ref{lemma:bounds1}, \textit{mutatis mutandis} and with the additional line in Inequality~\eqref{eq:Markov} that $\nu(\mathbb{R}^n \setminus RC_1(0)) \leq \nu(\mathbb{R}^n \setminus B_{R}(0))$.
\end{proof}

For log-concave distributions we can also produce a bound and also tighten the constant $C(\nu)$. Of course, for such distributions, Inequality~\eqref{eq:rp} applies.

\begin{proposition}\label{prop:logconcave}
Let $\nu$ be a log-concave distribution, i.e. $f_{\nu}(x) = e^{-\varphi(x)}$ for a convex function $\varphi(x)$ and let $R \geq 1$. Then, there exist constants $C = C(\nu, n)$ and $a>0$ such that:
\begin{equation}
W_1(\nu, \nu_{R}) \leq C(\nu, n) R^n e^{-aR},
\end{equation}
where $C(\nu, n)$ is given in Equation~\eqref{eq:fastrate}.
\end{proposition}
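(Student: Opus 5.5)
The plan is to start from Lemma~\ref{lemma:firstbound}, which gives
\begin{equation*}
W_1(\nu,\nu_R)\le \int_{\mathbb{R}^n\setminus B_R(0)}|x|\,d\nu(x)+R\bigl(1-\nu(B_R(0))\bigr)\le 2\int_{\mathbb{R}^n\setminus B_R(0)}|x|\,d\nu(x),
\end{equation*}
where the second inequality uses $|x|\ge R$ on the complement of the ball. It therefore suffices to show that the tail integral $\int_{|x|\ge R}|x|e^{-\varphi(x)}\,dx$ is at most a constant times $R^n e^{-aR}$.

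The key step is a linear lower bound $\varphi(x)\ge a|x|-b$ for some $a>0$ and $b\in\mathbb{R}$. This replaces the radial argument of Lemma~\ref{lemma:logcon}, which we cannot use since $\nu$ is not assumed radially symmetric. I would argue as follows.
\begin{itemize}
\item Since $e^{-\varphi}$ is integrable with total mass one, $\varphi$ is proper and finite on a set of positive measure. Hence there is a point $x_0$ and a radius $r>0$ with $\overline{B_r(x_0)}\subset\mathrm{int}\,\mathrm{dom}(\varphi)$, on which $\varphi\le K$.
\item I claim that $\varphi(x)\to\infty$ as $|x|\to\infty$. If not, convexity would force $\varphi$ to be bounded above by $K'$ along a ray $x_0+t\theta$, $t\ge0$.
\item Convexity then bounds $\varphi$ by roughly $K+K'$ on the convex hull of $B_r(x_0)$ and that ray, which is an infinite cone-like cylinder of infinite Lebesgue measure. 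This would make $\int e^{-\varphi}=\infty$, a contradiction.
\item So every sublevel set $\{\varphi\le c\}$ is bounded. Choose $c>\varphi(x_0)$ and $\rho$ large enough that $\{\varphi\le c\}\subset B_\rho(x_0)$.
\item For $|x-x_0|\ge\rho$, write the point $z=x_0+\rho\,(x-x_0)/|x-x_0|$ as a convex combination of $x_0$ and $x$. Convexity gives
\begin{equation*}
\varphi(x)\ge\varphi(x_0)+\frac{|x-x_0|}{\rho}\bigl(c-\varphi(x_0)\bigr).
\end{equation*}
This yields the linear bound with $a=(c-\varphi(x_0))/\rho$. Finally, shift from $|x-x_0|$ to $|x|$, absorbing $|x_0|$ into $b$.
\end{itemize}

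With the linear bound in hand, pass to polar coordinates:
\begin{equation*}
\int_{|x|\ge R}|x|e^{-\varphi(x)}\,dx\le e^{b}|\mathbb{S}^{n-1}|\int_R^\infty r^{n}e^{-ar}\,dr.
\end{equation*}
The right-hand integral is handled by the same repeated integration by parts and Taylor-series bound used in Equation~\eqref{eq:radiallogconcave}, now with exponent $n$ instead of $n-1$. For $R\ge1$ this gives at most $R^n e^{-aR}a^{-(n+1)}\bigl(1+n!\,e^{a}\bigr)$.

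Collecting the factors, the constant is
\begin{equation*}
C(\nu,n)=2e^{b}|\mathbb{S}^{n-1}|\,a^{-(n+1)}\bigl(1+n!\,e^{a}\bigr),
\end{equation*}
which I would record as Equation~\eqref{eq:fastrate}.

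The main obstacle is the linear lower bound on $\varphi$, in particular the coercivity argument in the non-radial setting. The delicate part is ensuring that the region where $\varphi$ is bounded along an escaping ray really has infinite measure. I would handle it via the convex hull of a small ball and the ray as described, taking care that $\mathrm{dom}(\varphi)$ might not be all of $\mathbb{R}^n$ (where $\varphi=+\infty$ off the domain, which only helps the bound).
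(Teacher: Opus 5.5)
Your proof is correct, and its skeleton matches the paper's: Lemma~\ref{lemma:firstbound}, a linear minorant $\varphi(x)\ge a|x|-b$, and a polar-coordinate tail estimate. The real difference is how you obtain the linear minorant.

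The paper (Parts A--D) goes in four steps:
\begin{itemize}
\item it shows $\varphi$ is proper;
\item it shows $\varphi$ attains a minimum;
\item it shows $\varphi$ cannot be constant along a ray from the minimizer, since translates of a ball along a recession direction carry infinite mass;
\item it asserts a uniform positive slope by minimizing the per-ray slope over the compact sphere.
\end{itemize}
That last step needs uniformity over directions, which non-constancy along each individual ray does not supply. Without further input, a convex function can have positive slope along every ray from its minimizer while these slopes tend to $0$ over directions. So integrability implicitly has to be used again there.

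You instead use integrability once, to get bounded sublevel sets. Then the chord inequality applied to a single sublevel set gives the uniform slope $a=(c-\varphi(x_0))/\rho$ explicitly, with no minimizer needed. This is shorter, and it closes exactly the point where the paper is loosest.

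Merging the two tail terms via $|x|\ge R$ costs only a factor $2$ and yields a simpler constant than the two sums in \eqref{eq:fastrate}. Your bound $\int_R^\infty r^n e^{-ar}\,dr\le R^n e^{-aR}a^{-(n+1)}(1+n!\,e^{a})$ for $R\ge 1$ is correct; in fact $n!\,e^{a}$ alone suffices.

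Two small details need filling in.

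First, extracting a whole ray on which $\varphi$ is bounded, from an unbounded sequence $x_k$ with $\varphi(x_k)\le K''$, is not automatic without lower semicontinuity. There are two easy fixes:
\begin{itemize}
\item Note that $B_r(x_0)$ lies in the interior of the convex set $\{\varphi\le\max(K,K'')\}$. A recession direction $\theta$ of its closure then puts $B_r(x_0)+t\theta$ inside that set for all $t\ge 0$, since the interior of the closure of a convex set equals its interior.
\item Or skip the ray: $\mathrm{conv}(\overline{B_r(x_0)}\cup\{x_k\})$ has volume at least a constant times $r^{n-1}|x_k-x_0|$, and $\varphi\le\max(K,K'')$ on it.
\end{itemize}

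Second, your chord inequality only covers $|x-x_0|\ge\rho$, but for $R\ge 1$ you need $\varphi(x)\ge a|x|-b$ on all of $\{|x|\ge R\}$. This requires $\varphi$ to be bounded below on $B_\rho(x_0)$. That follows by writing $x_0$ as a convex combination of $x$ and the point $x_0-r(x-x_0)/|x-x_0|\in\overline{B_r(x_0)}$, where $\varphi\le K$.
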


\begin{proof}
In Parts A-D, we prove that there exist constants $a>0$ and $b \in \mathbb{R}$ such that $\varphi(x) \geq a \vert x \vert + b$. We use the fact that $\varphi$ is convex and that $\int_{\mathbb{R}^n} e^{-\varphi(x)}dx = 1$. Most of the difficulty of the proof is contained in showing this (Parts A-D). Once we have this bound, we perform a routine computation to get the Wasserstein-1 distance between $\mu$ and $\mu_{R}$ (Part G).

Part A) We first show that $\varphi(x)$ must be a proper convex function. Suppose $\varphi(x)$ is not proper. This means that i) $\varphi(x) = +\infty$ for all points, in which case $e^{-\varphi(x)} = 0$ for all $x\in \mathbb{R}^n$ which contradicts the fact that $\int_{\mathbb{R}^n} e^{-\varphi(x)}dx = 1$ or ii) $\varphi(x) = -\infty$ for some $x \in \mathbb{R}^n$. Let $E$ denote the set where $\varphi(x) = -\infty$. Then $E$ must be a convex set of Lebesgue measure zero, which is a subset of a hyperplane. Then $\partial E$ is a measurable set since it is the boundary of a convex set, and it has measure zero. The set $\partial E$ is the only set on which $\varphi$ can take finite values. Because of this, then we are forced to conclude that $\int_{\mathbb{R}^n} e^{-\varphi(x)}dx = 0$, which, again, is a contradiction. For a similar reason, $\varphi$ cannot only be finite on a set of measure zero.

Part B) Now that we know $\varphi(x)$ must be proper, let us assume that $\varphi(x)$ has a minimum on $\mathbb{R}^n$ at, say, $x_0 \in \mathbb{R}^n$. We will demonstrate that if the affine dimension of $\text{dom}(\varphi)$ is equal to $n$, then the minima of $\varphi$ must exist on a compact set. Also, by the integrability condition, we cannot have the the affine dimension of $\text{dom}(\varphi)$ be less than $n$.

After an affine coordinate transformation, we assume $\varphi(0) = 0$. If $\varphi(x) = 0$ for some $x \neq 0$, then $\varphi$ must be equal to zero along the line connecting $x$ and $0$, since $\varphi$ is convex. Therefore, if $\varphi(x)$ takes the value zero arbitrarily far away from the origin, it takes the value zero on the ray emanating from the origin. By another affine coordinate transformation, we take this ray to be in the direction of the positive first coordinate axis. Thus, by the choice of our coordinate system, $\varphi(x)=0$ for any $x$ such that $x_1 \geq 0$ and $x_2 = \dots = x_n = 0$. Since in Part A) we proved that $\varphi$ cannot only be finite on a set of measure zero, and $\text{dom}(\varphi)$ is convex, then the affine dimension of $\text{dom}(\varphi)$ must be equal to $n$. Thus, $\text{int}(\text{dom}(\varphi)) \neq \emptyset$. Let $\tilde{x} \in \text{int}(\text{dom}(\varphi))$. Then, there exists a ball $B_{\epsilon}(\tilde{x}) \subset \text{int}(\text{dom}(\varphi))$, and therefore $\varphi(y) <\infty$ for every $y \in B_{\epsilon}(\tilde{x})$. Since $e_1$ is a direction of recession of $\text{dom}(\varphi)$, then $y + te_1 \in \text{dom}(\varphi)$ for $t\geq 0$ and every $y \in B_{\epsilon}(\tilde{x})$. Furthermore, $(e_1, 0) \subset \mathbb{R}^{n+1}$ is a direction of recession of $\text{epi}(\varphi)$ and therefore, $\varphi(y + t e_1) \leq \varphi(y)$ for every $t \geq 0$.  Thus,
\begin{align}
\int_{\mathbb{R}^n} e^{-\varphi(x)}dx &\geq \sum_{k=1}^{\infty} \int_{B_{\epsilon}(\tilde{x} + 2 \epsilon k)} e^{-\varphi(x)}dx \\
&=\sum_{k=1}^{\infty} \int_{B_{\epsilon}(\tilde{x})} e^{-\varphi(x)}dx = \infty.
\end{align}

Part C) Suppose $\varphi(x)$ has no minimum. By an affine change of coordinates, let $\varphi(0) = 0$. Then, $\varphi(x) \rightarrow -\infty$ along a ray $R = \{ay: a\geq 0, ,y \in \mathbb{R}^n\}$ emanating from the origin. Because $\varphi(x)$ is convex, it must be convex along the ray $R$. Since $\varphi(0) = 0$ and $\varphi$ is convex (and therefore monotonically decreasing along the ray $R$), then $\varphi(x) \leq 0$ for $x \in R$.

Then, we define a new function $\tilde{\varphi}(x) = \varphi(x)$ for $x$ such that $\varphi(x) \geq 0$ and $\tilde{\varphi}(x) = 0$ for $x$ such that $\varphi(x) < 0$. Then, $\tilde{\varphi}$ is convex and has a minimum. But, then $\int_{\mathbb{R}^n} e^{-\varphi(x)}dx \geq \int_{\mathbb{R}^n} e^{-\tilde{\varphi}(x)}dx = \infty$, by applying Part B to the function $\tilde{\varphi}(x)$.

Part D) We have thus showed that $\varphi$ must have a minimum. If there do not exist a constant $a>0$ and a vector $b \in \mathbb{R}$ such that $\varphi(x) \geq b + a \vert x \vert$, this must be because $\varphi$ is constant in a ray emanating from the minimum point $x_0$. Without loss of generality, suppose that the minimum of $\varphi$ occurs at the origin. Fix $b  < \varphi(0)$ and $a>0$. Let $S = \{ x: b + a \vert x \vert \leq \varphi(x) \}$. Then, if there is a point $y \in \mathbb{R}^n$ such that $\varphi(y) < b + a \vert y \vert$. Thus, we can find a new value $a'$ that satisfies $a' \leq \frac{\varphi(y) - b}{\vert y \vert}$ and will guarantee that $\forall x \in S$, we have $\varphi(x) \geq b + a' \vert x \vert$. We thus see that the existence of a uniform $a$ that guarantees that $\varphi(x) \geq b + a \vert x \vert$ for all $x \in \mathbb{R}^n$ is equivalent to guaranteeing that $\alpha = \inf_{y} \frac{\varphi(y) - b}{\vert y \vert} > 0$. Since $b<\varphi(x)$ for all $x \in \mathbb{R}^n$, this is only possible if $\varphi(x)$ grows asymptotically slower than any linear function as $\vert y \vert \rightarrow \infty$. The constant $\alpha = 0$ only if $\varphi$ is constant in some ray emanating from the origin (the minimum). This is because if $\varphi$ were not constant along any ray emanating from the origin, then along any ray emanating from the origin, by the convexity of $\varphi$, there would exist a tangent line with nonzero slope lying below $\varphi$ restricted to the ray. Then, since $\varphi$ lies above the tangent line, it grows at least linearly as $\vert y \vert \rightarrow \infty$. To be more precise, fix $z \in \mathbb{S}^{n-1}$ and define $r_{z} = \{ t z : t \geq 0 \}$. Define $\varphi_{z}(t) := \varphi(t z)$. Then, $\varphi_{z}(t)$ is convex on $[0, \infty)$. If $\varphi_{z}$ is not constant on $[0, \infty)$, then since $\varphi_{z}(t)$ is finite for $t = 0$, there exists a value $y >0$ such that $y \in \partial \varphi_{z}(t_0)$ for some $t_0 \in [0, \infty)$. Therefore, $\varphi_{z}(t) \geq \varphi_{z}(t_0) + y(t - t_0)$ for all $t \in [0, \infty)$. Thus, since the value of $\alpha$ can be found by minimizing along rays and then minimizing over $\mathbb{S}^{n-1}$, which is a compact set, the only way for $\alpha$ to equal zero is if $\varphi$ were constant in some ray emanating from the origin. Since above we have proved that this is not possible, we conclude that there must exist $a>0$ and $b \in \mathbb{R}$ such that $\varphi(x) \geq b + a \vert x \vert$ for all $x \in \mathbb{R}^n$.

Part E) This then shows that there exist $a>0$ and $b \in \mathbb{R}$ such that $\varphi(x) \geq a\left\vert x \right\vert + b$. Thus, for log-concave distributions $\nu$, we can use Lemma~\ref{lemma:firstbound}. First, we compute:
\begin{align}\label{eq:mbn1}
1 - \nu(B_{R}(0)) &= \int_{\mathbb{R}^{n-1} \setminus B_{R}(0)} e^{-\varphi(x)}dx \\
&\leq \int_{\mathbb{R}^{n} \setminus B_{R}(0)} e^{-a \vert x \vert - b}dx \\
&\leq \vert \mathbb{S}^{n-1} \vert e^{-b} \int_{R}^{\infty} r^{n-1}e^{-a r}dr \\
&= \vert \mathbb{S}^{n-1} \vert e^{-b-aR} R^{n-1} \sum_{k=1}^{n} \frac{R^{1-k}}{a^k} \frac{(n-1)!}{(n-k)!}.
\end{align}
Therefore, for $R \geq 1$, we have:
\begin{align}
1 - \nu(B_{R}(0)) \leq \vert \mathbb{S}^{n-1} \vert e^{-b-aR} R^{n-1} \sum_{k=1}^{n} \frac{(n-1)!}{a^k(n-k)!}.
\end{align}
Likewise, we obtain the estimate, for $R \geq 1$:
\begin{equation}\label{eq:mbn2}
\int_{\mathbb{R}^n \setminus B_{R}(0)} \vert x \vert e^{-\varphi(x)}dx \leq \vert \mathbb{S}^{n-1} \vert e^{-b - aR} R^n \sum_{k=1}^{n+1} \frac{n!}{a^k (n - k+1)!}.
\end{equation}
Therefore, using Inequality~\eqref{eq:generalbound}, we obtain, for $R \geq 1$, that:
\begin{align}\label{eq:fastrate}
W_1(\nu, \nu_R) &\leq \left( \sum_{k=1}^{n+1} \frac{n!}{a^k(n - k + 1)!} + \sum_{k=1}^{n} \frac{(n-1)!}{a^k(n - k)!} \right) \vert \mathbb{S}^{n-1} \vert R^n e^{-b - aR} \\
&= C(a, b, n) R^n e^{-aR}.
\end{align}
\end{proof}

This begins to hint at the success of this cutoff method, since we will expect exponential-type convergence for log-concave distributions. We have the following result for the $n$-cube case:

\begin{proposition}
Let $\nu$ be a log-concave distribution, i.e. $f_{\nu}(x) = e^{-\varphi(x)}$ for a convex function $\varphi(x)$ and let $R \geq 1$. Then, there exist constants $C = C(\nu, n)$ and $a>0$ such that:
\begin{equation}
W_1(\nu, \nu_{R_{C}}) \leq C(\nu, n) R^n e^{-aR},
\end{equation}
where $C(\nu, n)$ is given in Equation~\eqref{eq:fastrate}.
\end{proposition}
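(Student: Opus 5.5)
The proof runs parallel to that of Proposition~\ref{prop:logconcave}: only the cutoff region changes, and the $n$-cube $RC_1(0)$ contains $B_R(0)$.

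\emph{Step 1: linear lower bound on $\varphi$.} Parts A--D of the proof of Proposition~\ref{prop:logconcave} used only that $\varphi$ is convex with $\int_{\mathbb{R}^n} e^{-\varphi(x)}dx = 1$. They do not involve the cutoff set, so we may quote them directly: there exist $a>0$ and $b\in\mathbb{R}$ with $\varphi(x)\ge a|x|+b$ for all $x$.

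\emph{Step 2: general bound for the cube.} By the $n$-cube analogue of Lemma~\ref{lemma:firstbound},
\begin{equation*}
W_1(\nu,\nu_{R_C}) \le \int_{\mathbb{R}^n\setminus RC_1(0)} |x|\,d\nu(x) + \sqrt{n}\,R\,\bigl(1-\nu(RC_1(0))\bigr).
\end{equation*}

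\emph{Step 3: reduce to the ball.} Since $B_R(0)\subset RC_1(0)$, we have $\mathbb{R}^n\setminus RC_1(0)\subset \mathbb{R}^n\setminus B_R(0)$. The integrands $|x|e^{-\varphi(x)}$ and $e^{-\varphi(x)}$ are nonnegative, so both tail quantities for the cube are bounded by the corresponding tail quantities for the ball. Those were estimated in Inequalities~\eqref{eq:mbn1} and~\eqref{eq:mbn2} for $R\ge1$.

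\emph{Step 4: collect terms.} Substituting these two estimates into the bound of Step 2 gives, for $R\ge 1$,
\begin{equation*}
W_1(\nu,\nu_{R_C}) \le \left( \sum_{k=1}^{n+1} \frac{n!}{a^k(n-k+1)!} + \sqrt{n}\sum_{k=1}^{n} \frac{(n-1)!}{a^k(n-k)!} \right)|\mathbb{S}^{n-1}|\,R^n e^{-b-aR}.
\end{equation*}
This has the form $C(\nu,n)R^n e^{-aR}$, as claimed.

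\emph{Main obstacle: the constant.} The only point needing care is the factor $\sqrt{n}$, which multiplies the second sum. So the constant is the one in Equation~\eqref{eq:fastrate} only if "given in Equation~\eqref{eq:fastrate}" is read up to this dimensional factor. Alternatively, we can keep the displayed constant by putting $\sqrt{n}$ into $C(\nu,n)$; this is legitimate because that constant is already allowed to depend on $n$.
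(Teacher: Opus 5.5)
Your proposal is correct and follows the paper's proof. Like the paper, you reuse the linear lower bound $\varphi(x)\ge a|x|+b$ from Proposition~\ref{prop:logconcave} and replace the tails over $\mathbb{R}^n\setminus RC_1(0)$ by the larger tails over $\mathbb{R}^n\setminus B_R(0)$ in \eqref{eq:mbn1} and \eqref{eq:mbn2}. You are more careful than the paper about the constant: the cube analogue of Lemma~\ref{lemma:firstbound} carries a factor $\sqrt{n}R$ that the paper's one-line proof silently drops, so the correct constant is that of \eqref{eq:fastrate} with the second sum multiplied by $\sqrt{n}$, which is still of the form $C(\nu,n)$.
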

\begin{proof}
The only modifications needed from the proof of Proposition~\ref{prop:logconcave} are in Inequality~\eqref{eq:mbn1} where we note that $\nu(\mathbb{R}^n \setminus RC_1(0)) \leq \nu(\mathbb{R}^n \setminus B_{R}(0))$ and in Inequality~\eqref{eq:mbn2} where we note that $\int_{\mathbb{R}^n \setminus RC_1(0)} \vert x \vert e^{-\varphi(x)}dx \leq \int_{\mathbb{R}^n \setminus B_{R}(0)} \vert x \vert e^{-\varphi(x)}dx$. 
\end{proof}

\subsection{Pointwise Convergence of Brenier Potential and Pointwise Almost-Everywhere Convergence of Optimal Mapping}\label{sec:pointwise}

Once we have $L^1$ or $L^2$ rates in terms of $R$ for convergence on a compact set $\Omega$, we can establish pointwise convergence for the Brenier potentials and almost everywhere convergence for the optimal mapping. In fact, we do not need explicit $L^p$ rates, but simply convergence in $L^p$. Since the Wasserstein distance metrizes weak convergence for probability measures, see Theorem 6.9 from Villani~\cite{villani2}, this fact along with Theorem~\ref{thm:delalandemerigot} then implies that as long as $\mu_{R} \rightharpoonup \mu$ weakly, then we have pointwise convergence on any strict open subset of $\Omega$, once we assume the density function of $\mu$ is strictly bounded away from zero. Therefore, the only place we do not necessarily expect pointwise convergence, due to the convexity property of the Brenier potentials, is the boundary $\partial \Omega$.

First, we establish that there are sequences $\{\phi_{\mu;R_{k}} \}_{k}$ such that $\phi_{\mu;R_{k}} \rightarrow \phi_{\mu}$ a.e. as $k \rightarrow \infty$.

\begin{proposition}\label{prop:almosteverywhere}
For probability distributions $\mu$, where the density function $f_{\mu}$ of $\mu$ satisfies $f_{\mu}>0$ on $\Omega$ and $\nu$ such that $\left\Vert \phi_{\mu}- \phi_{\mu;R} \right\Vert_{L^2(\mu)} \rightarrow 0$ as $R \rightarrow 0$, we have that if $R_{k} \rightarrow \infty$ converges fast enough as $k \rightarrow \infty$, then $\phi_{\mu;R_{k}}$ converges to $\phi_{\mu}$ almost everywhere pointwise (and almost uniformly) on $\Omega$.
\end{proposition}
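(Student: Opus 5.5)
The argument is the standard route from $L^2$ convergence to almost-everywhere convergence of a subsequence, via Borel--Cantelli, with Egorov's theorem supplying almost uniform convergence. Note that the hypothesis should be read as $\Vert \phi_{\mu}-\phi_{\mu;R}\Vert_{L^2(\mu)}\to 0$ as $R\to\infty$.

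\emph{Step 1: quantify the speed.} Choose $R_k\to\infty$ so fast that
\begin{equation*}
\Vert \phi_{\mu}-\phi_{\mu;R_k}\Vert_{L^2(\mu)} \leq 2^{-k}.
\end{equation*}
Such a choice is possible directly from the hypothesis. When Theorem~\ref{thm:delalandemerigot} applies, it can be made explicit: combining it with Proposition~\ref{lemma:bounds1} or Proposition~\ref{prop:logconcave}, it suffices that $C\,W_1(\nu,\nu_{R_k})^{1/2}\le 2^{-k}$. This is the precise meaning of ``converges fast enough'' in the statement.

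\emph{Step 2: Borel--Cantelli.} Set $E_k:=\{x\in\Omega:\vert \phi_{\mu}(x)-\phi_{\mu;R_k}(x)\vert>k^{-1}\}$. By Chebyshev's inequality,
\begin{equation*}
\mu(E_k)\le k^2\, 4^{-k}.
\end{equation*}
These bounds are summable in $k$, so the Borel--Cantelli lemma gives $\mu(\limsup_k E_k)=0$. For $\mu$-a.e.\ $x$ we therefore have $x\notin E_k$ for all $k$ large enough, and hence $\phi_{\mu;R_k}(x)\to\phi_{\mu}(x)$.

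\emph{Step 3: transfer to Lebesgue measure.} The hypothesis $f_\mu>0$ on $\Omega$ is used here. It implies that $\mu$ and Lebesgue measure restricted to $\Omega$ are mutually absolutely continuous: a set $N\subset\Omega$ with $\mu(N)=\int_N f_\mu\,dx=0$ must have Lebesgue measure zero because $f_\mu>0$ there. So the convergence in Step 2 holds Lebesgue-a.e.\ on $\Omega$.

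\emph{Step 4: almost uniform convergence.} Since $\mu(\Omega)=1<\infty$ and the $\phi_{\mu;R_k}$ are measurable (indeed convex, hence continuous in $\text{int}(\Omega)$), Egorov's theorem upgrades a.e.\ convergence to almost uniform convergence. Equivalently, one can reach this directly from Step 2: on the complement of $\bigcup_{k\ge K}E_k$, whose $\mu$-measure tends to $0$ as $K\to\infty$, the convergence is uniform with rate $1/k$.

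\emph{Main obstacle.} The only delicate point is Step 3. If $f_\mu$ is merely positive rather than bounded below, mutual absolute continuity still holds, but the explicit smallness of the exceptional set in Lebesgue measure is lost. Explicit Lebesgue-measure control would require a uniform lower bound on $f_\mu$, such as the one assumed in Theorem~\ref{thm:delalandemerigot}. The statement itself only asks for a.e.\ convergence, so positivity suffices.
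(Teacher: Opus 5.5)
Your proof is correct and follows the same route as the paper: choose $R_k$ so that the errors are summable, apply Markov/Chebyshev and Borel--Cantelli to get $\mu$-a.e.\ convergence, pass to Lebesgue measure via positivity of $f_\mu$, and finish with Egorov. Your version is slightly cleaner in three respects: the shrinking threshold $1/k$ needs only one application of Borel--Cantelli; mere positivity of $f_\mu$ suffices for the transfer, where the paper invokes $\min_{\overline{\Omega}} f_\mu>0$; and you impose summability on the $L^2$ errors themselves, whereas the paper's condition $\sum_k W_1(\nu,\nu_{R_k})<\infty$ does not by itself control the square roots $W_1(\nu,\nu_{R_k})^{1/2}$ appearing in its bound.
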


\begin{proof}
First, we bound the $L^1$ norm above by the $L^2$ norm since we are on a compact set using H\"{o}lder's inequality:
\begin{align}
\left\Vert \phi_{\mu} - \phi_{\mu;R} \right\Vert_{L^1(\mu)} &\leq  \sqrt{\vert \Omega \vert} \left\Vert \phi_{\mu} - \phi_{\mu;R} \right\Vert_{L^2(\mu)}, \\
&\leq \sqrt{\vert \Omega \vert} C_{n, p, \overline{\Omega}, \mu, M_{p}}  W_{1}(\nu, \nu_{R})^{1/2}
\end{align}
We now define a sequence $\{R_k\}$ such that $\sum_{k=1}^{\infty} W_1(\nu, \nu_{R_{k}}) <\infty$. With this choice, we see that:
\begin{equation}
\sum_{k=1}^{\infty} \left\Vert \phi_{\mu} - \phi_{\mu;R_{k}} \right\Vert_{L^1(\mu)} <\infty.
\end{equation}
We will rewrite our $L^1$ norm as an expectation over a random variable whose law is $\mu$. We denote $g_k(x) := \vert \phi_{\mu;R_{k}}(x) - \phi_{\mu}(x) \vert$. Note that $g_{k}(X)$ is a non-negative random variable, where $X$ is the random variable whose law is $\mu$. With this notation, we write:
\begin{equation}
\Vert \phi_{\mu;R_{k}}-\phi_{\mu} \Vert_{L^1(\mu)} = \mathbb{E}_{\mu} [g_k(X)].
\end{equation}
Hence, denoting $\mathcal{L}_{\Omega}$ to be the Lebesgue measure restricted to $\Omega$, we find, using Markov's inequality, that for $a>0$:
\begin{equation}
\mathbb{P}_{\mu}(g_k(X) \geq a) \leq \frac{\mathbb{E}_{\mu} [g_k(X)]}{a},
\end{equation}
and thus,
\begin{equation}
\sum_{k=1}^{\infty} \mathbb{P}_{\mu}(g_k(X) \geq a) \leq \sum_{k=1}^{\infty} \frac{\mathbb{E}_{\mu} [g_k(X)]}{a} <\infty.
\end{equation}
Hence, since $\min_{x \in \overline{\Omega}} f >0$, we see:
\begin{equation}
\sum_{k=1}^{\infty} \mathcal{L}_{\Omega} (g_k(x) \geq a) < \infty.
\end{equation}
Applying the Borel-Cantelli lemma, we immediately conclude that:
\begin{equation}
\mathcal{L}_{\Omega} \left( \limsup_{k \rightarrow \infty} (g_k(x) \geq a) \right) = 0,
\end{equation}
for any $a>0$, where $\limsup_{k \rightarrow \infty} A_k = \cap_{k=1}^{\infty} \cup_{j=k}^{\infty} A_j$. The conclusion states that the set of points where $g_k(x) \geq a$ is true infinitely often (for any $a>0$) is a set of measure zero. This is the set where we do not have pointwise convergence to the zero function. Therefore, we conclude that $g_{k}(x) \rightarrow 0$ pointwise almost everywhere. Since $\Omega$ is bounded, by applying Egorov's theorem we get that the convergence is almost uniform on $\Omega$.
\end{proof}
Due to the convexity of $\phi_{\mu;R}$ and $\phi_{\mu}$, pointwise almost everywhere convergence becomes pointwise convergence on any open strict subset of $\Omega$ and uniform convergence on any closed subset of $\text{int}(\Omega)$, provided that $L^2$ bounds exist.

\begin{theorem}\label{thm:uniform}
For a probability distribution $\nu$ such that $\left\Vert \phi_{\mu}- \phi_{\mu;R} \right\Vert_{L^2(\Omega)} = h(R) \rightarrow 0$ as $R \rightarrow 0$, we have that $\phi_{\mu;R} \rightarrow \phi_{\mu}$ pointwise on any open strict subset of $\Omega$ and uniformly on any closed subset of $\text{int}(\Omega)$.
\end{theorem}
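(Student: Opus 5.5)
The plan is to combine the $L^2$ convergence with the rigidity of convex functions: a family of convex functions bounded in $L^1(\Omega)$ is locally uniformly bounded and locally equi-Lipschitz in $\text{int}(\Omega)$. An equi-Lipschitz family that converges in $L^1$ converges uniformly. Throughout, fix a compact $K \subset \text{int}(\Omega)$ and let $\delta := \frac{1}{3}\text{dist}(K, \partial\Omega) > 0$. Set $K_j := \{x : \text{dist}(x,K) \leq j\delta\}$ for $j=1,2$, so that $K \subset K_1 \subset K_2 \subset \text{int}(\Omega)$. Pointwise convergence on any open set compactly contained in $\Omega$, and hence at every point of $\text{int}(\Omega)$, follows by applying the compact case to the closure of that set, or to a closed ball around the point. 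I will read "open strict subset" in this sense; on $\partial\Omega$ nothing is claimed. Since $\Omega$ is bounded, Hölder's inequality gives $\Vert \phi_{\mu}-\phi_{\mu;R}\Vert_{L^1(\Omega)} \leq \sqrt{\vert\Omega\vert}\, h(R) \to 0$. Together with the fixed function $\phi_\mu \in L^1(\Omega)$, this yields a uniform bound $\sup_{R \geq R_0}\Vert \phi_{\mu;R}\Vert_{L^1(\Omega)} \leq A$.

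\textbf{Step 1 (uniform sup bound on $K_2$).} For convex $u$ and $x \in K_2$, the sub-mean-value property gives
\begin{equation*}
u(x) \leq \frac{1}{\vert B_{\delta}\vert}\int_{B_{\delta}(x)} u \leq \frac{A}{\vert B_\delta \vert}.
\end{equation*}
For a lower bound I use the reflection trick. For $z \in B_\delta(x)$, convexity gives $u(x) \leq \frac12 u(z) + \frac12 u(2x - z)$. The point $2x-z$ also lies in $B_\delta(x)$, so $u(2x-z) \leq A/\vert B_\delta\vert$, which gives $u(z) \geq 2u(x) - A/\vert B_\delta \vert$. Integrating this over $z \in B_\delta(x)$ yields
\begin{equation*}
-A \leq \int_{B_\delta(x)} u \geq \vert B_\delta\vert\, 2u(x) - A,
\end{equation*}
and hence $u(x) \geq -A/\vert B_\delta \vert$. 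So $\sup_{K_2}\vert\phi_{\mu;R}\vert \leq A/\vert B_\delta\vert =: A'$ uniformly in $R$, and the same holds for $\phi_\mu$.

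\textbf{Step 2 (equi-Lipschitz on $K_1$).} Take $x \in K_1$ and $p \in \partial u(x)$. Evaluate the supporting hyperplane inequality at $z = x + \delta p/\vert p\vert \in K_2$:
\begin{equation*}
u(z) \geq u(x) + \delta \vert p\vert .
\end{equation*}
This gives $\vert p\vert \leq 2A'/\delta$. Hence all $\phi_{\mu;R}$ and $\phi_\mu$ are $L$-Lipschitz on $K_1$ with $L := 2A'/\delta$ independent of $R$. This is where convexity replaces the missing uniform gradient bound: the gradients of $\phi_{\mu;R}$ lie in $B_R(0)$, which is not uniform in $R$.

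\textbf{Step 3 (from $L^1$ to uniform).} Let $w_R := \phi_{\mu;R}-\phi_\mu$, which is $2L$-Lipschitz on $K_1$. Suppose $\vert w_R(x_0)\vert = \eta$ for some $x_0 \in K$. Then $\vert w_R\vert \geq \eta/2$ on $B_r(x_0)$ with $r = \min\{\eta/(4L), \delta\}$, and this ball lies in $K_1$. Therefore
\begin{equation*}
\frac{\eta}{2}\,\vert B_1\vert\, r^n \leq \Vert w_R\Vert_{L^1(\Omega)} \leq \sqrt{\vert\Omega\vert}\,h(R).
\end{equation*}
For small $h(R)$ this forces $\eta \lesssim \big(L^n h(R)\big)^{1/(n+1)}$. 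Consequently $\sup_K \vert w_R\vert \to 0$, which gives both claims, and even a rate.

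The main obstacle is Step 1, specifically the lower bound: a convex function is only a priori controlled from above by its averages. The reflection trick above is the tool I would use first. The issue with the paper's normalization $\int_\Omega \phi = 0$ does not arise here, since only $L^1$ boundedness is used.
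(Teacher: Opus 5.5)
Your overall strategy works and differs from the paper's, but the lower bound in Step 1 does not follow from what you wrote. This is the step you yourself flagged as the crux.

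\textbf{The gap.} The midpoint inequality $u(x)\le\frac12u(z)+\frac12u(2x-z)$ has $x$ at the \emph{center} of the chord, so it can only control $u(x)$ from above. Integrating $u(z)\ge 2u(x)-A/\vert B_\delta\vert$ gives $\int_{B_\delta(x)}u\ge 2\vert B_\delta\vert u(x)-A$. Combined with $\int_{B_\delta(x)}u\le A$, this only returns the upper bound $u(x)\le A/\vert B_\delta\vert$. Your display chains two \emph{lower} bounds on $\int_{B_\delta(x)}u$, and no lower bound on $u(x)$ can be extracted from those. There is also a domain slip: for $x\in K_2$, the reflected point $2x-z$ can leave $K_2$, where your upper bound was not established.

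\textbf{The fix.} Put $x$ at an \emph{endpoint} of the chord. Start from $u(\frac{x+w}{2})\le\frac12u(x)+\frac12u(w)$ and average over $w\in B_\delta(x)$. Substitute $v=\frac{x+w}{2}$, so that $dw=2^n\,dv$ and $v$ ranges over $B_{\delta/2}(x)$. This gives
\begin{equation*}
u(x)\ \ge\ \frac{2^{n+1}}{\vert B_\delta\vert}\int_{B_{\delta/2}(x)}u\,dv-\frac{1}{\vert B_\delta\vert}\int_{B_\delta(x)}u\,dw\ \ge\ -\frac{(2^{n+1}+1)A}{\vert B_\delta\vert}
\end{equation*}
for every $x$ with $B_\delta(x)\subset\Omega$, in particular on all of $K_2$. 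Replace $A'$ by $(2^{n+1}+1)A/\vert B_\delta\vert$, and Steps 2 and 3 go through as written.

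\textbf{Comparison with the paper.} Once repaired, your route is a genuine alternative. The paper proceeds as follows:
\begin{itemize}
\item it passes from $L^2$ to $L^1$;
\item it uses Markov's inequality and Borel--Cantelli along a fast subsequence to get a.e.\ convergence (Proposition~\ref{prop:almosteverywhere});
\item it invokes Rockafellar's Theorem 10.8 to upgrade convergence of finite convex functions on a dense set to locally uniform convergence;
\item it finishes with a sub-subsequence argument for the whole family.
\end{itemize}
That argument is purely qualitative and outsources the compactness to Rockafellar. You prove the compactness by hand, via local sup and equi-Lipschitz bounds from an $L^1$ bound. You then interpolate $L^1$ against Lipschitz, which yields an explicit interior rate $\sup_K\vert\phi_{\mu;R}-\phi_\mu\vert\le C(K)\,\Vert\phi_{\mu;R}-\phi_\mu\Vert_{L^1(\Omega)}^{1/(n+1)}$ for $R\ge R_0$, without any subsequence extraction.

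Notably, your Step 3 never needs $\phi_{\mu;R}-\phi_\mu$ to vanish somewhere in $K$. That is precisely the obstruction the paper cites in Remark~\ref{rmk:geq2rates} as blocking interior rates of this order.
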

\begin{proof}
First, we prove that $\phi_{\mu;R}$ and $\phi_{\mu}$ can only be infinite on $\partial \Omega$. This follows from the Brenier theorem, namely the fact that $\mu$ is supported on $\text{Dom}(\phi_{\mu})$ and likewise for $\text{Dom}(\phi_{\mu;R})$. Therefore, since $\text{spt}(\mu) = \Omega$, we have that $\phi_{\mu}$ and $\phi_{\mu;R}$ can only be infinite on $\partial \Omega$.

By Proposition~\ref{prop:almosteverywhere}, a fast enough convergent subsequence $\{ \phi_{\mu;R_{k}} \}_{k}$ converges almost everywhere to $\phi_{\mu}$. We now demonstrate that such a subsequence converges pointwise to $\phi_{\mu}$ on compact subsets of $\Omega$. To do this, we now apply Theorem 10.8 from Rockafellar~\cite{rockafellar}, to a strict open convex subset $C$ of $\Omega$. Since $\phi_{\mu;R_{k}} \rightarrow \phi$ in a dense subset $C'$ of $C$, and $\phi_{\mu}$ is finite on $C'$. Therefore, any sequence $\phi_{\mu;R_{k}} \rightarrow \psi$ on $C$, where $\psi$ is convex. The function $\psi$ is equal almost everywhere in $C$ to $\phi_{\mu}$. Therefore, by continuity of convex functions on open sets, $\psi = \phi_{\mu}$ on $C$. The convergence is uniform on closed subsets of $C$ also by Theorem 10.8 of Rockafellar.

What we have just demonstrated is that every sequence $\{ \phi_{\mu;R_{k}} \}_{k}$ that converges quickly enough converges to $\phi_{\mu}$. Now, fix such a compact subset $C$. Suppose that we have a sequence $\{ \phi_{\mu;R_{l}} \}_{l}$, which does not converge to $\phi_{\mu}$ on $C$. We extract a subsequence $\{ \phi_{\mu;R_{l}} \}_{l}$ so that there exists a $y \in C$ such that there exists an $\epsilon>0$ such that $ \vert \phi_{\mu;R_{l}}(y) - \phi(y) \vert > \epsilon$ for every $l$. Then, by the same argument as above, there exists a convergent subsequence of this subsequence, contradicting that we assumed $\phi_{\mu;R_{l}}$ did not converge to $\phi_{\mu}$. Therefore, all sequences $\{ \phi_{\mu;R_{l}} \}_{l}$ converge to $\phi_{\mu}$ on $C$.
\end{proof}

Since $\phi_{\mu;R}$ converges to $\phi_{\mu}$ pointwise, we are able to now prove that $T_{\mu;R}$ converges to $T_{\mu}$ almost everywhere. This follows from the convexity property of the Brenier potentials and due to the fact that the mapping is contained in the subdifferential of the Brenier potential at every point $x$.

\begin{theorem}\label{thm:aemapping}
If $\phi_{\mu;R}$ and $\phi_{\mu}$ satisfy $\phi_{\mu;R} \rightarrow \phi$ pointwise, then $T_{\mu;R} \rightarrow T_{\mu}$ pointwise almost everywhere on $\Omega$, where $T_{\mu}$ is any mapping satisfying $T_{\mu}(x) = \nabla \phi_{\mu}(x)$ for almost every $x \in \Omega$.
\end{theorem}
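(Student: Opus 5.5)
The plan is to use the classical fact that pointwise convergence of convex functions on an open set forces convergence of their gradients at every point of differentiability of the limit (Rockafellar, Theorem 24.5 / 25.7). First I restrict attention to $\text{int}(\Omega)$, since $\partial\Omega$ has Lebesgue measure zero when $\Omega$ is convex, hence $\mu$-measure zero. By Theorem~\ref{thm:uniform}, the convergence $\phi_{\mu;R} \to \phi_{\mu}$ is pointwise on open subsets and uniform on compact subsets $K \subset \text{int}(\Omega)$. By Theorem~\ref{thm:brenier}, $T_{\mu;R}(x) = \nabla\phi_{\mu;R}(x)$ and $T_{\mu}(x)=\nabla\phi_{\mu}(x)$ hold off $\mu$-null sets. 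By Alexandrov/Rademacher, the sets of nondifferentiability of $\phi_{\mu}$ and of each $\phi_{\mu;R}$ are Lebesgue-null. I will treat the limit along a sequence $R_k\to\infty$, so that only countably many null sets appear. Their union $N$ is then null, and it suffices to show $\nabla\phi_{\mu;R_k}(x)\to\nabla\phi_\mu(x)$ for every $x\in \text{int}(\Omega)\setminus N$.

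Next I fix such an $x$ and prove the gradient convergence directly. Choose $\delta>0$ with $\overline{B_{2\delta}(x)}\subset\text{int}(\Omega)$. On $\overline{B_{2\delta}(x)}$ the uniform convergence gives a uniform bound on the $\phi_{\mu;R_k}$. Convexity then gives a uniform Lipschitz bound on $\overline{B_\delta(x)}$, so the gradients $p_k:=\nabla\phi_{\mu;R_k}(x)$ are bounded. Take any subsequential limit $p$. Passing to the limit in the subgradient inequality
\begin{equation*}
\phi_{\mu;R_k}(z)\ge \phi_{\mu;R_k}(x)+p_k\cdot(z-x), \qquad z\in B_\delta(x),
\end{equation*}
yields $p\in\partial\phi_\mu(x)$. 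Since $\phi_\mu$ is differentiable at $x$, we have $\partial\phi_\mu(x)=\{\nabla\phi_\mu(x)\}$, so $p=\nabla\phi_\mu(x)$. Every subsequence therefore has a further subsequence converging to the same limit, and the whole sequence converges. This gives $T_{\mu;R_k}(x)\to T_\mu(x)$.

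The main obstacle is the bookkeeping of null sets, together with the meaning of ``$T_{\mu;R}\to T_\mu$ a.e.'' for a continuous parameter $R$. The maps are only defined up to null sets depending on $R$, so an uncountable union of null sets cannot be discarded. I would handle this in one of two ways. The first is to state and prove the result along arbitrary sequences $R_k\to\infty$, as above. The second is to observe that if one defines $T_{\mu;R}(x)$ as any selection of $\partial\phi_{\mu;R}(x)$ at every $x$, the compactness argument applies verbatim with $p_k\in\partial\phi_{\mu;R_k}(x)$. In that case only the single null set where $\phi_\mu$ fails to be differentiable must be excluded, and the statement holds for the full family in $R$. A secondary point is that the uniform Lipschitz bound needs $x$ strictly interior, which is why the boundary is dropped; that costs nothing in measure.
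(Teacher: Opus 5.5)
Your proposal is correct and follows the paper's route. The paper cites Rockafellar, Theorem 24.5, for the fact that $\partial\phi_{\mu;R}(x)$ eventually lies in any neighbourhood of $\partial\phi_\mu(x)$; you instead reprove it directly via the local Lipschitz bound and a compactness argument, and then, like the paper, combine it with $T_{\mu;R}(x)\in\partial\phi_{\mu;R}(x)$ and the a.e. differentiability of $\phi_\mu$. Your treatment of the $R$-dependent null sets is more careful than the paper's. One small correction: the local uniform bound should come from the pointwise-convergence hypothesis itself (via Rockafellar, Theorem 10.8, or a convexity argument on a simplex), not from Theorem~\ref{thm:uniform}, whose hypothesis is $L^2$ convergence.
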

\begin{proof}
Since $\phi_{\mu;R} \rightarrow \phi_{\mu}$ pointwise on $\text{spt}(\nu)$, then by Rockafellar Theorem 24.5~\cite{rockafellar}, we have that $\lim_{R \rightarrow \infty} \partial \phi_{\mu;R}(x) \subset \phi_{\mu}(x)$ on $\text{int}(\Omega)$, since $\phi_{\mu;R}(x)$ limits to a finite number everywhere on $\text{int}(\Omega)$. By the Brenier-McCann theorem and the Kantorovich Theorem~\cite{maggi}, the optimal mapping satisfies $T_{R} \subset \partial \phi_{\mu;R}$ and $T \subset \partial \phi_{\mu}$. Therefore, for any $x \in F$, where $F \subset \Omega$ denotes the set of differentiability points of $\phi_{\mu}$, we have $\lim_{R \rightarrow \infty} T_{R}(x) \in \partial \phi_{\mu}(x) = \{ \nabla \phi_{\mu}(x) \}$. Since $\phi_{\mu}$ is convex, by Radamacher's theorem, $\phi_{\mu}$ is differentiable almost everywhere and thus $T_{\mu;R}(x) \rightarrow \nabla \phi_{\mu}(x)$ almost everywhere in $\Omega$. Therefore, $T_{\mu;R} \rightarrow T_{\mu}$, where $T_{\mu} = \nabla \phi_{\mu}(x)$ almost everywhere on $\Omega$, since $\partial \Omega$ is a set of measure zero (the boundary of a compact convex set has Lebesgue measure zero).
\end{proof}

\begin{remark}
We cannot expect pointwise or uniform convergence of $T_{\mu;R}$ to $T_{\mu}$, since $T_{\mu}(x) \rightarrow \infty$ as $\vert x \vert \rightarrow \infty$ and also because $T_{\mu}$ is not guaranteed to be continuous.
\end{remark}

\begin{remark}
To recapitulate, what we have shown in Theorems~\ref{thm:uniform} and~\ref{thm:aemapping} is that under the assumptions that 1) $\mu$ and $\nu$ have density functions, 2) the density function of $\mu$ is strictly bounded away from zero, 3) the cost function $c(x,y) = \vert x - y \vert^2$, and 4), $p>n$ and $p>4$, weak convergence of $\nu_{R}$ to $\nu$ leads to pointwise convergence of the Brenier potentials and almost-everywhere convergence for the optimal transport mapping. This is guaranteed merely under the assumption that $\nu_{R}$ converges to $\nu$ weakly.
\end{remark}

\subsection{Explicit Uniform Convergence Rates for Brenier Potentials}\label{sec:explicituniform}

This subsection contains the main results of this manuscript. The results of Section~\ref{sec:pointwise} can be used, in conjunction with H\"{o}lder continuity results on the Brenier potentials, to yield explicit convergence rates for the pointwise convergence of the Brenier potentials. Given the assumptions on the problem in Section~\ref{sec:pointwise}, therefore, leads to convergence rates that are controlled by the rate of convergence of the quantity $W_1(\nu_{R} , \nu)$.

In Delalande and M\`{e}rigot~\cite{merigot1}, the authors cite Berman and Berndtsson~\cite{BermanBerndtsson} as the source of the following result showing the H\"{o}lder continuity property of Brenier potentials on $\Omega$:

\begin{proposition}
If $p>\max\{ n, 2\}$ and $M_{p}<+\infty$ and $0<m<f_{\mu}<M<\infty$ on $\Omega$, then the Brenier potential $\phi$ is H\"{o}lder continuous over $\Omega$. That is, there exists a constant $C = C(n, p, \Omega)>0$ such that:
\begin{equation}\label{eq:holder}
\vert \phi_{\mu}(x) - \phi_{\mu}(x') \vert \leq C(n, p, \Omega) \left( \frac{M_{p}}{m} \right)^{1/p} \Vert x - x' \Vert^{1-\frac{n}{p}},
\end{equation}
for all $x, x' \in \Omega$.
\end{proposition}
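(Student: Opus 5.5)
The plan is the standard route: a bound on the Lipschitz constant of $\phi_{\mu}$ near $x$ in terms of the size of $\partial\phi_{\mu}(x)$, followed by a moment bound on that subdifferential. Since $\phi_{\mu}$ is convex and finite on $\Omega$, for any $x,x'\in\Omega$ and any $y\in\partial\phi_{\mu}(x)$, $y'\in\partial\phi_{\mu}(x')$ we have
\begin{equation*}
\vert \phi_{\mu}(x)-\phi_{\mu}(x')\vert \leq \max\{\vert y\vert,\vert y'\vert\}\,\Vert x-x'\Vert .
\end{equation*}
So it suffices to control $\vert y\vert$ for $y\in\partial\phi_{\mu}(x)$ in terms of the distance scale $\Vert x-x'\Vert$. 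The tool is that $\partial\phi_{\mu}$ pushes $\mu$ to $\nu$ (Theorem~\ref{thm:brenier}), together with the moment bound $M_p$ and the lower density bound $m$.

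The key step is a \emph{cone/monotonicity estimate}. Fix $x\in\Omega$, $y\in\partial\phi_{\mu}(x)$ with $\vert y\vert$ large, and set $e=y/\vert y\vert$. Take a small region $A\subset\Omega$ of diameter comparable to $\delta$ lying on the $e$-side of $x$: a piece of a cone with vertex $x$ and axis roughly $e$. I would use convexity of $\Omega$ to guarantee $\vert A\vert\geq c(\Omega)\delta^n$, at least after adjusting directions, with $c(\Omega)$ depending on inradius and diameter. For $z\in A$ and $w\in\partial\phi_{\mu}(z)$, monotonicity of the subdifferential, $(w-y)\cdot(z-x)\geq 0$, forces $w\cdot(z-x)\geq y\cdot(z-x)\gtrsim \vert y\vert\,\vert z-x\vert$ on the cone. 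Hence $\vert w\vert\gtrsim \vert y\vert$ on a set of $z$ of measure $\gtrsim \delta^n$. Pushing forward,
\begin{equation*}
m\,c(\Omega)\,\delta^n\,(c\vert y\vert)^p \leq \int_{A}\vert\nabla\phi_{\mu}\vert^p f_{\mu}\,dz\leq \int \vert w\vert^p d\nu = M_p ,
\end{equation*}
so $\vert y\vert\leq C\,(M_p/m)^{1/p}\delta^{-n/p}$.

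With this, choose $\delta=\Vert x-x'\Vert$ and insert it in the first display. This gives $\vert\phi_{\mu}(x)-\phi_{\mu}(x')\vert\leq C(n,p,\Omega)(M_p/m)^{1/p}\Vert x-x'\Vert^{1-n/p}$, which has exactly the claimed exponent; $p>n$ makes it positive.

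The hard part is making the cone argument work uniformly, including near $\partial\Omega$, where $\partial\phi_{\mu}(x)$ may be unbounded or empty. For boundary points I would argue on the segment: apply the estimate at interior points of $[x,x']$ and use convexity of $\phi_{\mu}$ along the segment. I would integrate the one-sided derivative along it and bound it by $\delta(t)^{-n/p}$, where $\delta(t)$ is the distance along the segment. This is integrable because $n/p<1$, and the integral gives $\Vert x-x'\Vert^{1-n/p}$. I would also use that $\phi_{\mu}$ equals its lower semicontinuous extension on $\Omega$. The geometric constant $c(\Omega)$, guaranteeing that the cone in direction $e$ meets $\Omega$ in a set of volume $\gtrsim\delta^n$, needs convexity plus nonempty interior. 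If the direction $e$ points out of $\Omega$, I would replace the cone by one in the direction of the segment to $x'$, which lies in $\Omega$. The condition $p>2$ is only needed to ensure finiteness of the second-moment quantities.
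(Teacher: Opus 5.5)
\textbf{The gap is in your key cone step.} As stated, it claims: for every $x\in\Omega$, every $y\in\partial\phi_{\mu}(x)$ and $\delta=\Vert x-x'\Vert$, the cone at $x$ with axis $e=y/\vert y\vert$ meets $\Omega$ in a set of measure $\geq c(\Omega)\delta^{n}$. Hence $\vert y\vert\leq C(M_p/m)^{1/p}\delta^{-n/p}$.

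This cannot be true. Nothing in it depends on $x'$, so fixing any admissible $\delta_0>0$ would give $\sup_{\mathrm{int}(\Omega)}\vert\partial\phi_{\mu}\vert\leq C\delta_0^{-n/p}<\infty$. Then $\nu=(\nabla\phi_{\mu})_{\#}\mu$ would be compactly supported, which is false in exactly the setting of the paper. Concretely, suppose $\mathrm{dist}(x,\partial\Omega)=\eta\ll\delta$ and $e$ is close to the outward normal, which is where large subgradients point (cf.\ Lemma~\ref{lemma:gradientexplosion}). Then the cone meets $\Omega$ only in a set of measure $O(\eta^{n})$. Your monotonicity computation is correct, but what it proves is the interior bound $\vert w\vert\leq C(n,p)(M_p/m)^{1/p}\,\mathrm{dist}(z,\partial\Omega)^{-n/p}$ for $w\in\partial\phi_{\mu}(z)$, obtained by putting the cone inside $B_{\mathrm{dist}(z,\partial\Omega)}(z)$.

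Neither patch turns this into a bound in terms of $\Vert x-x'\Vert$.
\begin{itemize}
\item Turning the axis to $u=(x'-x)/\Vert x'-x\Vert$ breaks the monotonicity step. You still get $w\cdot\xi\geq y\cdot\xi$, but $y\cdot\xi$ is no longer $\gtrsim\vert y\vert$. It can be negative once $y$ has a large component transverse to $u$, so no lower bound on $\vert w\vert$ follows.
\item Integrating along the straight segment $[x,x']$ fails because the only available bound at $z_t$ is in terms of $\mathrm{dist}(z_t,\partial\Omega)$, not the distance along the segment. The segment can run at distance $\eta\ll\delta$ from $\partial\Omega$, costing $\delta\,\eta^{-n/p}$ rather than $\delta^{1-n/p}$. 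It can also lie in a flat face of $\Omega$ (e.g.\ the rectangles of Theorem~\ref{thm:rectangular}), where $\partial\phi_{\mu}$ may be empty at every point.
\end{itemize}

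\textbf{The fix} is to bend the path into the interior by an amount proportional to $\delta$; this is where convexity of $\Omega$ genuinely enters. Fix $B_r(c)\subset\Omega$, $D=\mathrm{diam}(\Omega)$ and $s_0=\delta/D\leq 1$. By convexity, $B_{sr}((1-s)z+sc)\subset\Omega$ for all $z\in\Omega$ and $s\in[0,1]$. Go from $x$ to $(1-s_0)x+s_0c$, then to $(1-s_0)x'+s_0c$, then to $x'$, and integrate the interior bound along this path.
\begin{itemize}
\item Each radial leg contributes at most $C(M_p/m)^{1/p}\int_0^{s_0}D\,(sr)^{-n/p}\,ds=\frac{C}{1-n/p}(M_p/m)^{1/p}(D/r)^{n/p}\delta^{1-n/p}$. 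This is finite precisely because $p>n$.
\item The middle leg has length at most $\delta$ and stays at distance $\geq\delta r/D$ from $\partial\Omega$, so it contributes at most $C(M_p/m)^{1/p}(D/r)^{n/p}\delta^{1-n/p}$.
\item Boundary endpoints are handled by radial limits of the lower semicontinuous representative.
\end{itemize}
This gives the claim with $C(n,p,\Omega)=C(n,p)(D/r)^{n/p}$, and your first display is no longer needed.

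The paper gives no proof of its own and only cites the result. An equally short route is to note $\int_{\Omega}\vert\nabla\phi_{\mu}\vert^{p}\,dx\leq m^{-1}\int_{\Omega}\vert\nabla\phi_{\mu}\vert^{p}f_{\mu}\,dx=M_p/m$ and apply Morrey's inequality on the convex (hence Lipschitz) domain $\Omega$. That is exactly where the factor $(M_p/m)^{1/p}$ comes from. Your repaired argument is an elementary, self-contained version of the same idea, with the dependence on $\Omega$ made explicit through $\mathrm{diam}(\Omega)/r$.
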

\begin{remark}
Note that the result also applies to the Brenier potential $\phi_{\mu;R}$, since the H\"{o}lder constants are the same.
\end{remark}

\subsubsection{Explicit Rates of Convergence for $p > n$ and $p \geq 4$ on $\Omega$}

In this section, for the case $p>n$ and $p \geq 4$, by using the fact that the Brenier potentials are H\"{o}lder continuous, we will be able to derive explicit convergence rates. We will also obtain convergence results in anticipation of future results similar to Theorem~\ref{thm:delalandemerigot}.

Most of the following estimates in this subsection (except those where $\Omega$ is a rectangle) come from first providing an estimate on a ball of radius $\epsilon>0$. Then, this estimate on Euclidean balls can be used on domains that satisfy the inner $\epsilon$-ball property. For the $n$-cube case, we establish the explicit bounds for rectangular regions in Theorem~\ref{thm:rectangular}. We therefore begin with the following technical estimate:

\begin{lemma}\label{lemma:estimate1}
Let $x_0 \in \mathbb{R}^n$ and $\epsilon>0$ and define $B_{\epsilon}(x_0)$. Then, let $\psi(x)$ be a H\"{o}lder continuous function on $\overline{B_{\epsilon}(x_0)}$ with H\"{o}lder exponent $\alpha$ that satisfies $\psi(x) \geq 0$ on $\overline{B_{\epsilon}(x_0)}$ and there exists $\tilde{x} \in \overline{B_{\epsilon}(x_0)}$ such that $\psi(\tilde{x}) = 0$. Also, let $\psi(x)$ satisfy $\Vert \psi \Vert_{L^1(B_{\epsilon}(x_0))} \leq h(R)$, where $h(R) \rightarrow 0$ as $R \rightarrow \infty$. Then, there exists an $R_0>0$ and a constant $C = C(n, \alpha, C_{H})>0$ such that $\max_{x \in \overline{B_{\epsilon}(x_0)}} \vert \psi(x) \vert \leq C(n, \alpha, C_{H}) (h(R))^{\frac{\alpha}{\alpha + n}}$ for all $R \geq R_0$, where $C_{H}$ is the H\"{o}lder constant of $\psi(x)$.
\end{lemma}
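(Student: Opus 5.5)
We will use the standard interpolation argument: a nonnegative Hölder function with a large value at some point stays large on a whole ball around that point, and this forces its $L^1$ norm to be large. Let $\Psi := \max_{\overline{B_{\epsilon}(x_0)}} \psi$. It is attained at some $x^* \in \overline{B_{\epsilon}(x_0)}$ by continuity and compactness. The Hölder bound $\psi(x) \geq \Psi - C_H \Vert x - x^* \Vert^{\alpha}$ gives $\psi(x) \geq \Psi/2$ whenever $\Vert x - x^*\Vert \leq \delta := (\Psi/(2C_H))^{1/\alpha}$. We then integrate this lower bound over $B_{\delta}(x^*) \cap B_{\epsilon}(x_0)$.

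The main obstacle is the geometric step: $x^*$ may lie on $\partial B_{\epsilon}(x_0)$, so only part of $B_\delta(x^*)$ lies inside the domain of integration. We handle this with a uniform interior-cone (lune) estimate for balls. For $\delta \leq \epsilon$ we claim
\begin{equation*}
\vert B_{\delta}(x^*) \cap B_{\epsilon}(x_0) \vert \geq c_n \delta^n ,
\end{equation*}
with $c_n > 0$ depending only on $n$. To prove it, move from $x^*$ a distance $\delta/2$ toward $x_0$ to a point $z$. The ball $B_{\delta/2}(z)$ lies inside $B_\delta(x^*)$ by the triangle inequality. It also lies inside $\overline{B_\epsilon(x_0)}$, because $\Vert z - x_0\Vert \leq \epsilon - \delta/2$ when $\delta \le \epsilon$ (if $\Vert x^*-x_0\Vert<\delta/2$, take $z=x_0$ instead). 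Hence $c_n = 2^{-n}\omega_n$ works, where $\omega_n$ is the volume of the unit ball.

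Next we ensure $\delta \leq \epsilon$ for large $R$. If $\delta > \epsilon$, then $\psi \geq \Psi/2$ on all of $B_{\epsilon}(x_0)$ and $\Psi \geq 2C_H\epsilon^{\alpha}$. This gives $h(R) \geq C_H \epsilon^\alpha \omega_n \epsilon^n$, which fails once $R \geq R_0$ since $h(R) \to 0$. This is where $R_0$ comes from; it depends on $\epsilon$ as well.

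For $R \geq R_0$ we then obtain
\begin{equation*}
h(R) \geq \Vert \psi \Vert_{L^1(B_\epsilon(x_0))} \geq \frac{\Psi}{2} c_n \left(\frac{\Psi}{2C_H}\right)^{n/\alpha},
\end{equation*}
so $\Psi^{(\alpha+n)/\alpha} \leq 2^{1+n/\alpha} C_H^{n/\alpha} c_n^{-1} h(R)$. This yields $\Psi \leq C(n,\alpha,C_H)\, h(R)^{\alpha/(\alpha+n)}$ with $C = \bigl(2^{1+n/\alpha} C_H^{n/\alpha}/c_n\bigr)^{\alpha/(\alpha+n)}$. Since $\psi \geq 0$, this bounds $\max \vert\psi\vert$. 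The hypothesis that $\psi(\tilde x)=0$ at some point is not needed for this bound. We will note that it only plays a role later, when the lemma is applied to $\psi = \vert \phi_{\mu}-\phi_{\mu;R}\vert$ (a constant-shift normalization).
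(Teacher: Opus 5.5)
Your proof is correct apart from one slip (below), and it takes a genuinely different route from the paper.

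The paper proceeds by an extremal-profile comparison:
\begin{itemize}
\item It builds an explicit profile $\tilde{\psi}(x) = C_H(\Delta x - \Vert x - x^*\Vert)^{\alpha}$ around a boundary point $x^*$, using the zero $\tilde x$ to fix the support radius $\Delta x$.
\item It computes the mass of $\tilde\psi$ exactly through the Beta integral $\int_0^{\Delta x} r^{n-1}(\Delta x - r)^{\alpha}\,dr = \frac{(n-1)!}{\prod_{i=1}^n(\alpha+i)}\Delta x^{\alpha+n}$.
\item It uses that $B_{\epsilon}(x_0)$ captures at least a quarter of $B_{\Delta x}(x^*)$ once $\Delta x$ is below an unquantified threshold; this is where its $R_0$ comes from.
\item It then asserts that any admissible $\psi$ with no more mass than $\tilde\psi$ has no larger maximum.
\end{itemize}
You replace all of this with the pointwise plateau bound $\psi \ge \Psi/2$ on $B_\delta(x^*)$ and an explicit, non-asymptotic interior-ball volume estimate.

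Your route buys rigor and a fully explicit constant. The paper's comparison claim is never proved, and for $\alpha<1$ it is not literally true. The function $(\Psi - C_H\Vert x - x^*\Vert^{\alpha})_+$ is admissible and has the same maximum $\Psi = C_H\Delta x^{\alpha}$. By subadditivity of $t\mapsto t^{\alpha}$ it lies strictly below $\tilde\psi$, so it has strictly less mass. Hence only the scaling $\Delta x^{\alpha+n}$ of the paper's computation survives, not its constant. The paper's approach, if redone with the correct minimal profile, would buy a sharper numerical constant than yours, which loses a factor $\frac12$ from the plateau and $2^{-n}$ from the half-radius ball.

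The slip is in ruling out $\delta>\epsilon$. It does not follow that $\psi\ge\Psi/2$ on all of $B_\epsilon(x_0)$: $x^*$ may lie on $\partial B_\epsilon(x_0)$, and points of the ball can then be up to $2\epsilon$ away from it. The repair is immediate.
\begin{itemize}
\item Your lune construction works for every $\delta\le 2\epsilon$, since the choice $z=x_0$ only needs $\delta/2\le\epsilon$. So the main computation already covers $\delta\le 2\epsilon$.
\item If $\delta>2\epsilon$, then $\psi\ge\Psi/2>C_H(2\epsilon)^{\alpha}$ genuinely holds on all of $B_\epsilon(x_0)$. That contradicts $h(R)\to 0$ for large $R$.
\end{itemize}
Better still, the zero hypothesis you set aside removes the case entirely. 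It gives $\Psi=\psi(x^*)-\psi(\tilde x)\le C_H(2\epsilon)^{\alpha}$, hence $\delta\le 2^{1-1/\alpha}\epsilon\le\epsilon$ whenever $\alpha\le 1$. Your bound then holds for every $R$, and no $R_0$ is needed at all.
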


\begin{proof}
We construct a function $\tilde{\psi}$ satisfying $\tilde{\psi}(\tilde{x}) = 0$ for some $\tilde{x} \in B_{\epsilon}(x_0)$ and $\tilde{\psi}(x) \geq 0$ on $B_{\epsilon}(x_0)$, but the maximum of $\tilde{\psi}$ on $B_{\epsilon}(x_0)$ we take to be at some $x^{*} \in \partial B_{\epsilon}(x_0)$ and $\tilde{\psi}(x)$ will increase as quickly as possible (controlled by the H\"{o}lder condition with constant $C_{H}$ and exponent $\alpha$) from $\tilde{x}$ to $x^{*}$. Let $\Delta x := \Vert \tilde{x} - x^{*} \Vert$. The function $\tilde{\psi}(x)$ will be thus chosen explicitly as:
\begin{equation}
\tilde{\psi}(x) :=
\begin{cases}
C_{H}(\Delta x - \Vert x - x^{*} \Vert)^{\alpha}, &\text{for} \ \Vert x - x^{*} \Vert \leq \Delta x \\
0, &\text{otherwise}.
\end{cases}
\end{equation}
By this construction, any $\psi$ satisfying the hypotheses of the lemma such that $\int_{B_{\epsilon}(x_0)} \psi(x)dx \leq \int_{B_{\epsilon}(x_0)} \tilde{\psi}(x)dx$ will then automatically satisfy $\max_{x \in \overline{B_{\epsilon}(x_0)}} \psi(x) \leq \max_{x \in \overline{B_{\epsilon}(x_0)}} \tilde{\psi}(x)$.

We now compute the integral of $\tilde{\psi}$:
\begin{align}
\int_{B_{\epsilon}(x_0)} \tilde{\psi}(x)dx &= C_{H} \int_{B_{\epsilon}(x_0) \cap B_{\Delta} x^{*}} (\Delta x - \Vert x - x^{*} \Vert)^{\alpha}dx \\
&= C\vert \mathbb{S}^{n-1} \vert \frac{\vert B_{\epsilon}(x_0) \cap B_{\Delta x}(x^{*}) \vert }{\vert B_{\Delta x}(x^{*}) \vert} \int_{0}^{\Delta x} r^{n-1} (\Delta x - r)^{\alpha}dr.
\end{align}
For $\Delta x$ small enough, we have:
\begin{equation}\label{eq:deltax}
\frac{1}{4}\vert B_{\Delta x}(x^{*}) \vert \leq \vert B_{\epsilon}(x_0) \cap B_{\Delta x}(x^{*}) \vert.
\end{equation}
This comes from the observation that in the limit, the intersection of $B_{\epsilon}(x_0)$ and $B_{\Delta x}(x^{*})$ is half of $B_{\Delta x}(x^{*})$. That is, there exists a value $0<\overline{\Delta x} \leq 2 \epsilon$ such that for all $\Delta x \leq \overline{\Delta x}$ we have Inequality~\eqref{eq:deltax}. We compute the following integral exactly via repeated integration by parts:
\begin{equation}
\int_{0}^{\Delta x} r^{n-1}(\Delta x - r)^{\alpha}dr = \frac{(n-1)!}{\prod_{i=1}^{n}(\alpha + i)}\Delta x^{\alpha + n}.
\end{equation}
We overestimate the size of $\Delta x$ by requiring:
\begin{align}
\int_{B_{\epsilon}(x_0)} \psi(x)dx &\geq \frac{1}{4} C_{H} \vert \mathbb{S}^{n-1} \vert \frac{(n-1)!}{\prod_{i=1}^{n} (\alpha + i)} \Delta x^{n + \alpha} \\
&= h(R).
\end{align}
Hence we choose:
\begin{equation}
\Delta x = \left( \frac{h(R)}{\beta} \right)^{\frac{1}{n+ \alpha}},
\end{equation}
where $\beta = \frac{1}{4} C_{H} \vert \mathbb{S}^{n-1} \vert \frac{(n-1)!}{\prod_{i=1}^{n} (\alpha + i)}$. Let $R_0>0$ denote the value such that for all $R \geq R_0$, we have $\Delta x \leq \overline{\Delta x}$. Then, this assures that:
\begin{align}
\int_{B_{\epsilon}(x_0)} \tilde{\psi}(x)dx &\geq \frac{C_{H}}{4} \vert \mathbb{S}^{n-1} \vert \frac{(n-1)!}{\prod_{i=1}^{n} (\alpha + i)} \Delta x^{n + \alpha} \\
&= h(R) \\
&\geq \int_{B_{\epsilon}(x_0)} \psi(x)dx,
\end{align}
for all $R \geq R_0$. Thus, for all $R \geq R_0$, we have the bound:
\begin{equation}
\max_{x \in \overline{B_{\epsilon}(x_0)}}\psi(x) \leq \max_{x \in \overline{B_{\epsilon}(x_0)}} \tilde{\psi}(x) \leq C_{H} \Delta x^{\alpha} = \frac{C_{H}}{\beta^{\frac{\alpha}{\alpha+n}}} (h(R))^{\frac{\alpha}{\alpha + n}},
\end{equation}
for all $R>0$.
\end{proof}

Now follows what we consider to be the main result of this manuscript. Using Lemma~\ref{lemma:estimate1}, then we are able to prove the following:
\begin{theorem}\label{thm:explicitrates1}
Let $\Omega$ satisfy the inner $\epsilon$-ball property, see Definition~\ref{def:epsilonball}, for some $\epsilon>0$. Let $0<m \leq f_{\mu} \leq M < \infty$ and let $p>n$, $p \geq 4$ and let $\Vert \phi_{\mu;R} - \phi_{\mu} \Vert_{L^1(\Omega)} \leq h(R)$. Then, there exists a constant $C = C(n, p, \Omega, m, M, C_{H})$ and a value $R_0 >0$ such that for all $R \geq R_0$, we have $\max_{x \in \Omega} \vert \phi_{\mu;R}(x) - \phi_{\mu}(x) \vert \leq C(n, p, \Omega, m, M, C_{H}) (h(R))^{\frac{1 - \frac{n}{p}}{1+\frac{n}{q}}}$, where $q$ satisfies $\frac{1}{p}+\frac{1}{q} = 1$.
\end{theorem}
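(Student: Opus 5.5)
The plan is to apply the mechanism of Lemma~\ref{lemma:estimate1} directly to $\psi := \vert \phi_{\mu;R} - \phi_{\mu} \vert$ on $\Omega$, using the inner $\epsilon$-ball property to supply the local volume estimate near the maximum point. This avoids having to locate a zero of $\psi$ in every ball of the covering. The exponent is what the lemma produces with $\alpha = 1 - \frac{n}{p}$, since
\begin{equation*}
\alpha + n = 1 + n\left(1 - \tfrac{1}{p}\right) = 1 + \tfrac{n}{q},
\end{equation*}
so that $\frac{\alpha}{\alpha+n} = \frac{1 - n/p}{1 + n/q}$.

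The first step is Hölder control of $\psi$. By Inequality~\eqref{eq:holder} and the remark after it, $\phi_{\mu}$ and $\phi_{\mu;R}$ are both $\alpha$-Hölder on $\Omega$ with the same constant $C_H = C(n,p,\Omega)(M_p/m)^{1/p}$. Here I use that the $p$th moment of $\nu_R$ is bounded by $2M_p$ once $\nu(B_R(0)) \geq 1/2$. Since $\bigl\vert \vert a \vert - \vert b \vert \bigr\vert \leq \vert a - b \vert$, the function $\psi$ is $\alpha$-Hölder with constant $2C_H$. Let $\eta := \max_{\Omega} \psi = \psi(x^*)$ for some $x^* \in \Omega$, which exists by continuity on the compact set $\Omega$. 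Then on $\Omega \cap B_r(x^*)$ we have
\begin{equation*}
\psi(x) \geq \eta - 2C_H \Vert x - x^* \Vert^{\alpha}.
\end{equation*}

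The second step, which I expect to be the main obstacle, is a uniform lower bound on $\vert \Omega \cap B_r(x^*) \vert$ for all $x^* \in \Omega$, including boundary points. By Definition~\ref{def:epsilonball}, there is a $z$ with $x^* \in \overline{B_{\epsilon}(z)} \subset \Omega$. For $r \leq \epsilon$, elementary geometry (the lens argument behind Inequality~\eqref{eq:deltax}, where the worst case is $x^*$ on the sphere) should give
\begin{equation*}
\vert \overline{B_{\epsilon}(z)} \cap B_r(x^*) \vert \geq c_n \vert B_r \vert
\end{equation*}
with $c_n>0$ depending only on $n$. I would prove this by exhibiting the ball $B_{r/2}$ centred at the point $x^* + \frac{r}{2}\frac{z - x^*}{\vert z - x^* \vert}$, which lies in both sets when $r \leq \epsilon$ (if $x^* = z$ the claim is trivial). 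This gives $c_n = 2^{-n}$.

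The third step is to integrate and conclude. Take $r = \left(\eta/(4C_H)\right)^{1/\alpha}$, so that $\psi \geq \eta/2$ on $\Omega \cap B_r(x^*)$. Provided $r \leq \epsilon$, this yields
\begin{equation*}
h(R) \geq \int_{\Omega \cap B_r(x^*)} \psi \, dx \geq \frac{\eta}{2}\, c_n \vert B_1 \vert\, r^n = c\, \eta^{1 + n/\alpha},
\end{equation*}
so that $\eta \leq C\, h(R)^{\frac{\alpha}{\alpha+n}}$, which is the claimed rate. Note that the $L^1(\Omega)$ norm in the hypothesis is with respect to Lebesgue measure; this is harmless because $m \leq f_{\mu} \leq M$ lets one pass between $L^1(\Omega)$ and $L^1(\mu)$.

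It remains to choose $R_0$ so that $r \leq \epsilon$, that is $\eta \leq 4C_H\epsilon^{\alpha}$. If instead $\eta > 4C_H\epsilon^{\alpha}$, I would run the same argument with $r = \epsilon$ and $\psi \geq \eta - 2C_H\epsilon^{\alpha} \geq \eta/2$ on the lens. This gives $h(R) \geq c'\eta$, hence $\eta \leq C h(R)$, and for $R$ large, when $h(R)$ is small, this contradicts $\eta > 4C_H\epsilon^{\alpha}$. Alternatively, one can invoke the uniform convergence of Theorem~\ref{thm:uniform}. Either way, $R_0$ is the threshold beyond which $h(R)$ is small enough, and all constants depend only on $n, p, \Omega, m, M, C_H$.
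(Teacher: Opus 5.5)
Your proof is correct. It rests on the same mechanism as the paper's: a nonnegative $\alpha$-H\"{o}lder function with small $L^1$ norm cannot have a tall spike, once the inner $\epsilon$-ball property guarantees that $\Omega$ fills a fixed fraction of every small ball around the maximum point. The execution, however, is genuinely different.

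The paper argues by comparison with an extremal profile:
\begin{itemize}
\item it takes the cone $\tilde{\psi}(x) = C_H(\Vert x^{*}-\tilde{x}\Vert - \Vert x - x^{*}\Vert)^{\alpha}$ peaked at a boundary point;
\item it truncates this cone to an $\epsilon$-ball touching $x^{*}$ to obtain $\Theta$;
\item it asserts, without proof, that $\max \psi \leq \max\tilde{\psi} \leq \max \Theta$ when the integrals agree;
\item it evaluates the cone integral exactly in Lemma~\ref{lemma:estimate1}, using the estimate that the $\epsilon$-ball contains a quarter of $B_{\Delta x}(x^{*})$ for $\Delta x$ small.
\end{itemize}
That route also needs $\psi$ to vanish somewhere in $\Omega$, which comes from the mean-zero normalization.

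You instead bound $\psi \geq \eta/2$ on $\Omega \cap B_r(x^{*})$ and integrate. Your volume fraction $2^{-n}$ comes from the inscribed ball $B_{r/2}$ and holds for every $r \leq \epsilon$ and every $x^{*} \in \Omega$, interior or boundary. This gains three things:
\begin{itemize}
\item it proves directly the comparison principle that the paper only asserts;
\item it does not need a zero of $\psi$ at all;
\item your dichotomy for $\eta > 4C_H\epsilon^{\alpha}$ plays the role of the paper's choice of $R_0$ making the cone radius smaller than $\epsilon$, and it makes $R_0$ explicit. You need $\nu(B_R(0)) \geq \frac{1}{2}$ and $h(R)$ below a threshold of order $2^{-n}\vert B_{\epsilon}\vert C_H \epsilon^{\alpha}$.
\end{itemize}
The paper's exact cone computation buys only a better constant.

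There are two small corrections.
\begin{itemize}
\item The $p$th moment of $\nu_R$ is only bounded by $2M_p$, so the H\"{o}lder constant of $\phi_{\mu;R}$ can be up to $2^{1/p}$ times that of $\phi_{\mu}$. Define a common $C_H$ using $2M_p$ rather than saying the constants coincide.
\item Your alternative appeal to Theorem~\ref{thm:uniform} would not suffice on its own. That theorem gives uniform convergence only on closed subsets of $\text{int}(\Omega)$, while the maximum may sit on $\partial\Omega$. You would also need the equi-H\"{o}lder bound to pass to the boundary. The dichotomy argument is the one to keep.
\end{itemize}
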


\begin{proof}
Define $\psi(x) := \vert \phi_{\mu;R}(x) - \phi_{\mu}(x) \vert$. Then, since $\int_{\Omega} \phi_{\mu;R}(x)d\mu(x) = \int_{\Omega} \phi_{\mu}(x) d\mu(x) = 0$, the density function $f_{\mu}$ is bounded away from zero and both $\phi_{\mu}$ and $\phi_{\mu;R}$ are continuous on $\overline{\Omega}$, we have that $\psi(x)$ must equal zero somewhere in $\Omega$. Also, $\psi(x) \geq 0$. Furthermore, since both $\phi_{\mu;R}$ and $\phi_{\mu}(x)$ are H\"{o}lder continuous with the same constants and exponent $\alpha = 1 - \frac{n}{p}$, then $\psi(x)$ is H\"{o}lder continuous (with twice the constant) with the same exponent $\alpha = 1 - \frac{n}{p}$. Since we have $L^2$ rates and we are integrating over a compact set $\Omega$, this immediately leads to $L^1$ convergence rates over $\Omega$. Thus, we have that $\int_{\Omega} \psi(x)dx = h(R)$ and $h(R) \rightarrow 0$ as $R \rightarrow \infty$.

We can bound the maximum of $\psi(x)$ by finding a function $\tilde{\psi}(x)$ with the same properties, but that takes the volume $h(R)$ in the least area. This is done by fixing the maximum of $\tilde{\psi}(x)$ to be attained at a point $x^{*} \in \partial \Omega$ and having the function decrease as quickly as possible to zero and be zero elsewhere. That is, we desire $\tilde{x}$ to be a point at which $\tilde{\psi}(\tilde{x}) = 0$. Let $C_{H}$ be the H\"{o}lder constant of $\psi$. Then,
\begin{equation}\label{eq:tildepsi}
\tilde{\psi}(x) := \begin{cases}
C_{H}(\Vert x^{*} - \tilde{x} \Vert - \Vert x - x^{*} \Vert)^{\alpha}, &\text{for} \ \Vert x - x^{*} \Vert \leq \Vert x^{*} - \tilde{x} \Vert \\
0, &\text{otherwise}.
\end{cases}
\end{equation}
Fix $R>0$. We then choose $\tilde{\psi}(x)$ by choosing $\tilde{x}$ to be a point which satisfies:
\begin{equation}
\int_{\Omega} \tilde{\psi}(x)dx = h(R).
\end{equation}
Now, we define another function $\Theta(x)$ in the following way. Since $\Omega$ satisfies the inner $\epsilon$-ball property, there exists a point $x_0 \in \Omega$ such that $x^{*} \in \partial B_{\epsilon}(x_0)$. Then, we define $\Theta$ as:
\begin{equation}
\Theta(x) := \begin{cases}
C_{H}(\Vert x^{*} - y \Vert - \Vert x - x^{*} \Vert)^{\alpha}, &\text{for} \  x \in \{\Vert x - x^{*} \Vert \leq \Vert x^{*} - y \Vert\} \cap \{ B_{\epsilon}(x_0) \} \\
0, &\text{otherwise}.
\end{cases}
\end{equation}
And settle the choice of $y$ to be a point which satisfies:
\begin{equation}
\int_{\Omega} \Theta(x)dx = h(R).
\end{equation}
Notice that there exists an $R_0>0$ such that for all $R \geq R_0$, $\Theta(x)$ and $\tilde{\psi}(x)$ take the value zero somewhere in $B_{\epsilon}(x_0)$. Also, notice by construction that $\max_{x \in \Omega} \tilde{\psi}(x) \leq \max_{x \in \Omega} \Theta(x)$. We now apply Lemma~\ref{lemma:estimate1} with the H\"{o}lder constants and exponents of $\psi$ to bound $\Theta$ and get:
\begin{equation}
\max_{x \in \Omega} \psi(x) \leq \max_{x \in \Omega} \tilde{\psi}(x) \leq \max_{x \in \Omega} \Theta(x) \leq C(n, p, \Omega, m, M, C_H) (h(R))^{\frac{1-\frac{n}{p}}{1+\frac{n}{q}}}.
\end{equation}
\end{proof}

\begin{remark}
We emphasize here that the rates of convergence given above are not asymptotic, but rather explicit.
\end{remark}

For closed rectangular domains, i.e. $\Omega = \{ x \in \mathbb{R}^n : a \leq x \leq b, a, b \in \mathbb{R}^n, a_i < b_i \ \text{for all} \ i \}$, which is what we will be using in computation, see Section~\ref{sec:computation}, we can produce a similar estimate.
\begin{theorem}\label{thm:rectangular}
For rectangular domains $\Omega = \{ x \in \mathbb{R}^n : a \leq x \leq b, a, b \in \mathbb{R}^n, a_i < b_i \ \text{for all} \ i \}$, let $p > n$, $p \geq 4$ and $\Vert \phi_{R} - \phi \Vert_{L^{1}(\Omega)} \leq h(R)$. Then, we have that:
\begin{equation}
\max_{x \in \Omega} \vert \phi_{\mu;R}(x) - \phi_{\mu}(x) \vert \leq C(n, p, \Omega, m, M, C_{H}) (h(R))^{\frac{1- \frac{n}{p}}{1 + \frac{n}{q}}}.
\end{equation}
\end{theorem}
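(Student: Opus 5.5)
The plan is to reduce the rectangular case to the same local ``volume versus height'' comparison used in Lemma~\ref{lemma:estimate1} and Theorem~\ref{thm:explicitrates1}. A rectangle does not have the inner $\epsilon$-ball property at its corners, so instead of inscribed balls I would use inscribed cubes (orthants) at each point of $\Omega$. Set $\psi(x) := \vert \phi_{\mu;R}(x) - \phi_{\mu}(x) \vert$, exactly as in the proof of Theorem~\ref{thm:explicitrates1}.

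\begin{enumerate}[label=(\roman*)]
\item The properties of $\psi$ carry over verbatim. It is nonnegative, and by the normalization $\int_\Omega \phi = 0$ together with $f_\mu \geq m>0$ it vanishes somewhere in $\Omega$. By the H\"{o}lder estimate~\eqref{eq:holder}, applied to both $\phi_{\mu;R}$ and $\phi_\mu$, it is H\"{o}lder with exponent $\alpha = 1-\frac{n}{p}$ and constant $2C_H$. Finally, $\int_\Omega \psi \leq h(R)$.
\item The geometric input is the following. Let $\ell := \min_i (b_i - a_i)$. For any $x^* \in \Omega$ there is an axis-parallel cube $Q$ of side $\ell/2$ with $x^* \in Q \subset \Omega$ and $x^*$ a vertex of $Q$: choose the direction in each coordinate pointing toward the far side of $[a_i,b_i]$. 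Consequently, for every $r \leq \ell/2$,
\begin{equation*}
\vert B_r(x^*) \cap \Omega \vert \geq 2^{-n} \vert B_r(x^*) \vert .
\end{equation*}
This is the replacement for Inequality~\eqref{eq:deltax}, with $1/4$ replaced by $2^{-n}$, and it holds uniformly, with no smallness condition beyond $r \leq \ell/2$.
\item Now let $x^*$ be a maximizer of $\psi$ and $s := \psi(x^*)$. By the H\"{o}lder bound, $\psi(x) \geq s - 2C_H \Vert x - x^*\Vert^{\alpha}$ on $B_r(x^*)$ with $r = (s/2C_H)^{1/\alpha}$. Integrating over $B_r(x^*)\cap\Omega$ and using the lower bound from step (ii) gives, if $r \leq \ell/2$,
\begin{equation*}
h(R) \geq 2^{-n} C_H \vert \mathbb{S}^{n-1}\vert \frac{(n-1)!}{\prod_{i=1}^n(\alpha+i)}\, r^{n+\alpha} \cdot 2 = c\, s^{(n+\alpha)/\alpha}.
\end{equation*}
This uses the same one-dimensional integral as in Lemma~\ref{lemma:estimate1}. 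Hence $s \leq C\, h(R)^{\alpha/(\alpha+n)}$, and the exponent $\alpha/(\alpha+n)$ is the one written in the statement. If instead $r > \ell/2$, the same computation with $r$ replaced by $\ell/2$ gives $h(R) \geq c' s$ for $s \geq 2C_H(\ell/2)^\alpha$. This cannot occur once $h(R)$ is below an explicit threshold, and that threshold defines $R_0$. Alternatively, for all $R$ the bound can be absorbed into the constant, since $\psi$ is bounded by the H\"{o}lder constant times $\text{diam}(\Omega)^\alpha$.
\end{enumerate}

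The main obstacle is making step (ii) clean at corners and edges, where only a $2^{-n}$ fraction of each small ball lies in $\Omega$. This costs only a constant depending on $n$ and $\Omega$ through $\ell$, and the orthant argument handles it directly. I would therefore present step (iii) as a direct lower-bound argument rather than the paper's extremal-function comparison. This also avoids needing $\tilde{\psi}$ to attain its maximum on the boundary.
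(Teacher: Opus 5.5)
Your argument is correct and gives the same exponent $\alpha/(\alpha+n)=(1-\frac{n}{p})/(1+\frac{n}{q})$, but it is organized differently from the paper's proof.

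\textbf{The paper's route.} It argues by an extremal-function comparison. It places the peak of a model profile $\tilde{\psi}=C_H(\Delta x-\Vert x-x^{*}\Vert)^{\alpha}$ at a corner $x^{*}$ of the rectangle. It then asserts that any admissible $\psi$ with no more mass than $\tilde{\psi}$ has no larger maximum. Finally it evaluates the mass of $\tilde{\psi}$ as $2^{-n}$ of the full-ball integral, tacitly treating the corner as the worst location.

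\textbf{Your route.} You work at the actual maximizer of $\psi$ and use the lower envelope $s-2C_H\Vert x-x^{*}\Vert^{\alpha}$ forced by H\"older continuity. You obtain the factor $2^{-n}$ at every point of $\Omega$ (interior, face, edge or corner) from the inscribed cube of side $\ell/2$.

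\textbf{What each buys.} Your proof is self-contained. It needs neither the comparison principle nor the claim that the corner is worst, and the paper asserts both without proof. The comparison as stated in fact holds only up to a constant, because $(\Delta x-\rho)^{\alpha}$ is not the mass-minimizing profile for a given peak. Your main estimate also does not use the zero of $\psi$. Your alternative handling of $r>\ell/2$, via the zero of $\psi$ and $\psi\le 2C_H\,\mathrm{diam}(\Omega)^{\alpha}$, gives a bound valid for every $R$. This matches the paper's remark that the estimate is not merely asymptotic. The paper's route mainly buys structural parallelism with Lemma~\ref{lemma:estimate1} and Theorem~\ref{thm:explicitrates1}.

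\textbf{One slip in step (iii).} You integrate the envelope $2C_H(r^{\alpha}-\rho^{\alpha})$ but quote the integral $\int_0^r\rho^{n-1}(r-\rho)^{\alpha}\,d\rho$ from Lemma~\ref{lemma:estimate1}. Since $t\mapsto t^{\alpha}$ is subadditive, $r^{\alpha}-\rho^{\alpha}\le(r-\rho)^{\alpha}$, so that step overstates the mass. The correct value is $\int_0^r\rho^{n-1}(r^{\alpha}-\rho^{\alpha})\,d\rho=\frac{\alpha}{n(n+\alpha)}\,r^{n+\alpha}$. This has the same power $r^{n+\alpha}$, so only the constant $c$ changes and your conclusion stands. (The $r>\ell/2$ branch, which you compute directly, is unaffected.)
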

\begin{proof}
As in the proof of Theorem~\ref{thm:explicitrates1}, we have that $\int_{\Omega} \psi(x)dx = h(R)$, where $\Omega$ is a rectangular domain that, notably, does not satisfy the $\epsilon$-ball property. As in the same proof, we find a function $\tilde{\psi}$ of the form in Equation~\eqref{eq:tildepsi} such that the maximum of the function occurs at $x^{*} \in \partial \Omega$, which is a corner of the rectangular domain $\Omega$, i.e. a point where $x^{*}_i = a_i$ or $x^{*}_i = b_i$ for all $i$ and we choose $\tilde{x}$ such that $\int_{\Omega} \tilde{\psi}(x)dx = h(R)$. We choose $\Delta x = \Vert x^{*} - \tilde{x} \Vert$ such that:
\begin{align}
h(R) &= \int_{\Omega} \tilde{\psi}(x)dx \\
&= \frac{1}{2^{n}} \int_{B_{\Delta x} (x^{*})} C_{H} (\Delta x - \Vert x - x^{*} \Vert)^{\alpha}dx \\
&= \frac{C_{H}}{2^{n}} \vert \mathbb{S}^{n-1} \vert \int_{0}^{\Delta x} r^{n-1} (\Delta x - r)^{\alpha}dx,
\end{align}
where the second equality is true for $\Delta x$ small enough, i.e. $\Delta x \leq \overline{\Delta x} < \min_{i} b_i - a_i$. Using the computation of $\int_{0}^{\Delta x} r^{n-1} (\Delta x - r)^{\alpha}dx$ from the proof of Lemma~\ref{lemma:estimate1}, we get that choosing $\Delta x = \min \left\{ \overline{\Delta x}, \left( \frac{h(R)}{\tilde{\beta}} \right)^{\frac{1}{\alpha + n}} \right\}$, where $\tilde{\beta} = 2^{-n} \vert \mathbb{S}^{n-1} \vert C_{H} \frac{(n-1)!}{\prod_{i=1}^{n} (\alpha + i)}$, we have that:
\begin{align}
\max_{x \in \Omega} \psi(x) &\leq \max_{x \in \Omega} \tilde{\psi}(x) \\
&\leq C(n, p, \Omega, m, M, C_H) (h(R))^{\frac{\alpha}{\alpha + n}} \\
&=C(n, p, \Omega, m, M, C_H) (h(R))^{\frac{1 - \frac{n}{p}}{1 + \frac{n}{q}}}.
\end{align}
\end{proof}

\begin{remark}
We emphasize that the estimates for rectangular domains are not asymptotic, but rather are explicit.
\end{remark}

\begin{remark}\label{rmk:geq2rates}
Although it can be shown that the potential functions $\phi_{\mu}$ and $\phi_{\mu;R}$ are equi-Lipschitz in, say, a subset $\Omega ' \subset \text{int}(\Omega)$ (that satisfies an $\epsilon$-ball property with a different value of $\epsilon$ than for $\Omega$), it does not seem possible to generally obtain accelerated convergence rates due to this equi-Lipschitz property in $\Omega '$, even though we expect that rates could be established on such domains $\Omega '$ due to the equi-Lipschitz property. This is due to the fact that in order to apply the techniques of Lemma~\ref{lemma:estimate1}, Theorem~\ref{thm:explicitrates1} or Theorem~\ref{thm:rectangular}, we must be guaranteed that $\phi_{\mu}(x) - \phi_{\mu;R}(x)$ attains a zero in $\Omega '$ uniformly in $R$ (one can easily show that for a fixed $R$ there must be a zero in the interior of $\Omega$). It is not at this point obvious to the author how to overcome this difficulty. If the zeros of the function $\phi_{\mu}(x) - \phi_{\mu;R}(x)$ all remained in $\Omega '$, then one would immediately have the asymptotic rate $\max_{x \in \Omega_{\delta}} \vert \phi_{\mu}(x) - \phi_{\mu;R}(x) \vert \leq C(n, p, \Omega, m, M, \Omega ') (h(R))^{\frac{1}{1+n}}$.
\end{remark}

\subsection{Convergence and Other Properties of the Inverse Brenier Potential}\label{sec:inversebrenier}

In this subsection, we will prove some convergence results for the inverse Brenier potential. Recall that for $p >n$ and $p \geq 4$ the Brenier potential is H\"{o}lder continuous and for $p \neq n$ it is not necessarily H\"{o}lder continuous and may even be unbounded. We show that the inverse Brenier potential, by contrast, is Lipschitz. We also derive the convergence of the inverse Brenier potential in the $p > n$ and $p \leq n$, in the latter case by adding the additional assumption that the density function of $\nu$ is bounded away from zero.

\begin{lemma}\label{lemma:lipschitz}
The inverse Brenier potential $\phi_{\nu}$ and $\phi_{\nu;R}$ are Lipschitz with constant $\tilde{R}$, where $\tilde{R} = \sup_{x \in \Omega} \Vert x \Vert$.
\end{lemma}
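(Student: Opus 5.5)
The plan is to realize the inverse Brenier potential as a Legendre transform and read off the Lipschitz bound from the fact that the supremum runs only over the bounded set $\Omega$. For the quadratic cost, and after shifting the potentials by $\frac12\Vert x\Vert^2$ if necessary, the forward potential $\phi_{\mu}$ is convex on $\Omega$ and the inverse potential is its convex conjugate restricted to $\Omega$:
\begin{equation*}
\phi_{\nu}(y) = \sup_{x \in \Omega} \{ x \cdot y - \phi_{\mu}(x) \}, \qquad \phi_{\nu;R}(y) = \sup_{x \in \Omega} \{ x \cdot y - \phi_{\mu;R}(x) \}.
\end{equation*}
This is the standard duality between forward and inverse potentials, the same $c$-transform relation used in Equation~\eqref{eq:ctransform}. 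Restricting the supremum to $\Omega = \text{spt}(\mu)$ is legitimate because the dual potentials only need to be defined by their values on the support of the source. By Theorem~\ref{thm:brenier}, $\nabla\phi_\nu$ pushes $\nu$ forward to $\mu$, and this choice of $\phi_\nu$ is an admissible inverse potential.

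The main step is an elementary estimate on affine functions. For each fixed $x \in \Omega$, the map $y \mapsto x\cdot y - \phi_{\mu}(x)$ is affine with slope $x$. It is therefore Lipschitz with constant $\Vert x\Vert \le \tilde R$. Now take $y_1, y_2$ and any $x \in \Omega$. Then
\[
x\cdot y_1 - \phi_\mu(x) \le x\cdot y_2 - \phi_\mu(x) + \tilde R\Vert y_1-y_2\Vert \le \phi_\nu(y_2) + \tilde R \Vert y_1 - y_2\Vert.
\]
Taking the supremum over $x$ gives $\phi_\nu(y_1) - \phi_\nu(y_2) \le \tilde R\Vert y_1-y_2\Vert$, and swapping $y_1, y_2$ gives the reverse inequality. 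The identical computation with $\phi_{\mu;R}$ in place of $\phi_\mu$ handles $\phi_{\nu;R}$. The constant is $\tilde R$ in both cases, independent of $R$, because only $\Omega$ enters.

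We must also check that $\phi_\nu$ is finite everywhere, so that the estimate is not vacuous. This holds because $\phi_\mu$ is bounded below on the compact set $\Omega$: it is lower semicontinuous and convex, and under the hypotheses of this section it is even Hölder continuous by Inequality~\eqref{eq:holder}. Alternatively, finiteness at a single point plus the one-sided bound above already propagates finiteness to all points.

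The main obstacle is justifying the representation of $\phi_\nu$ as a supremum over $\Omega$ rather than over all of $\mathbb{R}^n$. One must make sure the normalization chosen for the inverse potential agrees with this conjugate, and that there is no ambiguity off the support of $\nu$. I would address this by \emph{defining} $\phi_\nu$ and $\phi_{\nu;R}$ by the conjugate formulas above, which fixes the additive constant compatibly with $\int_\Omega\phi_\mu = 0$. Then $\partial\phi_\nu(y) \subset \overline{\text{conv}}(\Omega) = \Omega$ for every $y$, since $\Omega$ is convex. This gives an alternative proof of the same bound: the subgradients of $\phi_\nu$ have norm at most $\tilde R$.
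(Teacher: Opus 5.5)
Your proof is correct, but it takes a different route from the paper's. The paper argues through subgradients. It uses the Brenier--McCann theorem to place $\nabla\phi_{\nu}(x)$ in $\Omega$ at differentiability points. It then uses density of those points and continuity of subdifferentials to bound every $z\in\partial\phi_{\nu}(x)$ by $\tilde{R}+\epsilon$, and closes with the two subgradient inequalities and $\epsilon\to 0$. You instead fix the potential as the Legendre transform over $\Omega$, which is essentially the formula the paper itself takes as the definition in Theorem~\ref{thm:pgn}. You then use only that a finite supremum of affine functions with slopes in $\Omega$ is $\tilde{R}$-Lipschitz. Your closing remark $\partial\phi_{\nu}(y)\subset\Omega$ is exactly the paper's subgradient bound, read off from the conjugate structure rather than from the transport map.

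Your version is more rigorous away from the support of the measure. Brenier--McCann only controls $\nabla\phi_{\nu}$ $\nu$-a.e., and hence, via continuity of subdifferentials, at differentiability points of $\text{int}(\text{spt}(\nu))$. For $\phi_{\nu;R}$, whose measure lives in $\overline{B_R(0)}$, an arbitrary convex extension outside the ball could have gradients leaving $\Omega$. So the paper's phrase that the potential \emph{can be chosen} is doing real work, and your explicit definition makes it precise. Your route also makes the $R$-independence of the constant transparent and avoids the differentiability machinery. The paper's route, in turn, applies to any Brenier potential on regions that $\nu$ charges, without first identifying it as a conjugate.

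The one step you should justify with the right reference is that the conjugate is an admissible inverse potential. This follows from the Fenchel--Young equality case: $y\in\partial\phi_{\mu}(x)$ implies $x\in\partial\phi_{\mu}^{*}(y)$. Combined with $\nu\ll$ Lebesgue, this gives $\nabla\phi_{\mu}^{*}\circ\nabla\phi_{\mu}=\mathrm{id}$ $\mu$-a.e. It does not follow from Theorem~\ref{thm:brenier} alone.
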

\begin{proof}
Let $\tilde{R} = \sup_{x \in \Omega} \Vert x \Vert$. The result follows from the Brenier-McCann theorem (see Theorem 6.1 in Maggi~\cite{maggi}), the fact that mass is transported to a target measure with compact support, and by the continuity of subdifferentials of convex functions. We show the result for $\phi_{\nu}$, since the argument for $\phi_{\nu;R}$ is identical, \textit{mutatis mutandis}. Fix $x, y \in \mathbb{R}^d$. Since $\phi_{\nu}$ is a Brenier potential, by the Brenier-McCann theorem, see Theorem 6.1 in Maggi~\cite{maggi}, we have that since the support of the target set is compact, that $\phi_{\nu}$ can be chosen such that $\phi_{\nu}(x) \neq 0$ for all $x \in \mathbb{R}^d$. By the fact that $\phi_{\nu}$ is convex, we then have that $\nabla \phi_{\nu}(x)$ exists for $x \in F$, where $F$ is a dense subset of $\mathbb{R}^d$ by Theorem 25.5 of Rockafellar~\cite{rockafellar}. Then, by the Brenier-McCann theorem, $\nabla \phi_{\nu}(x)$ satisfies $\vert \nabla \phi_{\nu} \vert \leq \tilde{R}$ in $F$, since $\nabla \phi_{\nu}(x)$ is contained in the support of the target measure for every $x \in F$. By the continuity of subdifferentials, for every $\epsilon>0$, there exists a $\delta>0$ such that $\partial \phi_{\nu}(B_{\delta}(x)) \subset B_{\epsilon} (\nabla \phi_{\nu}(x))$. Fix $x, y \in \mathbb{R}^d$ and $\epsilon>0$. Since $\phi_{\nu}$ is differentiable on $F$ which is dense in $\mathbb{R}^d$, we can find a point $\tilde{x}$ that satisfies $\vert x - \tilde{x} \vert \leq \delta$, where $\tilde{x}$ is a point of differentiability such that every $z \in \partial \phi_{\nu}(x)$ satisfies $\vert z \vert \leq \tilde{R}+\epsilon$. Using this, and the definition of the subdifferential, we get:
\begin{align}\label{eq:subdif1}
\phi_{\nu}(x)-\phi_{\nu}(y) &\leq z \cdot (x - y).
\end{align}
Likewise, we can guarantee that every $z' \in \partial \phi_{\nu}(y)$ satisfies $\vert z' \vert \leq \tilde{R}+\epsilon$. Using this and Inequality~\eqref{eq:subdif1}, we get:
\begin{equation}
-z' \cdot (x - y) \leq \phi_{\nu}(x)-\phi_{\nu}(y) \leq z \cdot (x - y).
\end{equation}
Therefore,
\begin{align}
\vert \phi_{\nu}(x)-\phi_{\nu}(y) \vert &\leq \max \{ \vert z \vert, \vert z' \vert \} \vert x - y \vert, \\
&\leq (\tilde{R} + \epsilon) \vert x - y \vert.
\end{align}
Taking $\epsilon \rightarrow 0$ completes the proof.
\end{proof}




For the case $p>n$ and $p \geq 4$, once we have convergence rates for the Brenier potential on $\Omega$, we immediately get the same convergence rates for the inverse Brenier potential.

\begin{theorem}\label{thm:pgn}
For $p>n$, assume that $\phi_{\mu}$ and $\phi_{\mu;R}$ satisfy $\vert \phi_{\mu}(x) - \phi_{\mu;R}(x) \vert \leq k(R)$ for all $x \in \Omega$. Then, $\phi_{\nu}$ and $\phi_{\nu;R}$ satisfy $\vert \phi_{\nu}(y) - \phi_{\nu;R}(y) \vert \leq k(R)$ for $y \in \text{spt}(\nu)$.
\end{theorem}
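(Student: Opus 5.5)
The plan is to use Legendre duality. For the cost $c(x,y)=\frac{1}{2}\Vert x-y\Vert^2$, the forward and inverse Brenier potentials are convex conjugates of one another.
\begin{align}
\phi_{\nu}(y) &= \sup_{x \in \Omega} \{ x\cdot y - \phi_{\mu}(x) \}, \\
\phi_{\nu;R}(y) &= \sup_{x \in \Omega} \{ x\cdot y - \phi_{\mu;R}(x) \}.
\end{align}
Here $\phi_{\mu}$ and $\phi_{\mu;R}$ are extended by $+\infty$ outside $\Omega$, since $\mu$ is supported on $\Omega$. This is the same $c$-transform relation used in the radial section, in Equation~\eqref{eq:ctransform}. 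The additive constants of $\phi_{\nu}$ and $\phi_{\nu;R}$ are fixed by these formulas, once $\phi_{\mu}$ and $\phi_{\mu;R}$ have been normalized by $\int_{\Omega}\phi\,dx=0$. The first step is to justify that these conjugate formulas define valid inverse potentials. Their gradients map into $\Omega$ and push $\nu$ (respectively $\nu_R$) forward to $\mu$, by Theorem~\ref{thm:brenier} applied to the inverse problem.

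Second, because $p>n$, the H\"{o}lder estimate \eqref{eq:holder} shows that $\phi_{\mu}$ and $\phi_{\mu;R}$ are finite and continuous on the compact set $\Omega$. Hence, for each fixed $y$, both suprema are attained at some points $x_y, x_y^R \in \Omega$. Attainment is convenient but not essential, since an $\epsilon$-maximizer argument works equally well.

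Third, for $y \in \text{spt}(\nu)$ we use the standard argument. Evaluating the supremum for $\phi_{\nu;R}$ at $x_y$ gives
\begin{equation}
\phi_{\nu;R}(y) \geq x_y\cdot y - \phi_{\mu;R}(x_y) \geq x_y \cdot y - \phi_{\mu}(x_y) - k(R) = \phi_{\nu}(y) - k(R).
\end{equation}
Swapping the roles, with the point $x_y^R$, gives $\phi_{\nu}(y) \geq \phi_{\nu;R}(y) - k(R)$. Together these yield $\vert \phi_{\nu}(y)-\phi_{\nu;R}(y)\vert \leq k(R)$. In fact the bound then holds for every $y\in\mathbb{R}^n$, and in particular on $\text{spt}(\nu)$.

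The main obstacle is the first step: checking that the inverse potentials, as normalized in the paper, really are the conjugates taken over $\Omega$ with no extra additive constant. The worry is a mismatch of constants between $\phi_{\nu}$ and $\phi_{\nu;R}$ that is not controlled by $k(R)$. I would resolve this by defining the inverse potentials through the conjugate formulas above. By Lemma~\ref{lemma:lipschitz}, the resulting functions are convex and Lipschitz with constant $\tilde R$, and their subgradients lie in $\Omega$. They are therefore legitimate Brenier potentials for the inverse problems.
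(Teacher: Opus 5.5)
Your proof is correct and has the same skeleton as the paper's. You write $\phi_{\nu}$ and $\phi_{\nu;R}$ as Legendre conjugates of $\phi_{\mu}$ and $\phi_{\mu;R}$, then evaluate each supremum at the other problem's maximizer to get the two one-sided bounds.

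The difference is in how the maximizer is produced. The paper restricts to $y\in\text{spt}(\nu)$ and uses the Brenier--McCann theorem (density of the gradient image of $\text{spt}(\mu)$ in $\text{spt}(\nu)$) to choose $x_k\in\Omega$ with $\nabla\phi_{\mu}(x_k)\to y$. It then extracts a convergent subsequence and passes to the limit in the Fenchel equality, using the H\"{o}lder continuity of $\phi_{\mu}$ and the Lipschitz bound on $\phi_{\nu}$ from Lemma~\ref{lemma:lipschitz}. You instead take a maximizer of the continuous function $x\mapsto x\cdot y-\phi_{\mu}(x)$ on the compact set $\Omega$, or bypass attainment with $\epsilon$-maximizers.

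Your route is shorter and needs neither the support characterization nor any regularity of $\phi_{\nu}$. It also shows the statement is just the sup-norm nonexpansiveness of the Legendre transform for functions sharing the effective domain $\Omega$. With your normalization, the bound therefore holds on all of $\mathbb{R}^n$, and the restriction to $\text{spt}(\nu)$ is an artifact of the paper's method. You also make explicit the normalization of the inverse potentials as conjugates over $\Omega$, which the paper simply takes as the definition.

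The paper's detour through the transport map buys nothing extra here: its limiting argument relies on the same compactness of $\Omega$ and continuity of $\phi_{\mu}$ that you use. Neither argument carries over unchanged to Theorem~\ref{thm:pleqninverse}. There, convergence of $\phi_{\mu;R}$ is only available in $\text{int}(\Omega)$, so one must additionally show that the maximizers stay away from $\partial\Omega$.
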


\begin{proof}
By definition, the inverse Brenier potential satisfies:
\begin{equation}\label{eq:fenchel}
\phi_{\nu}(y) = \sup_{x} \{ x \cdot y - \phi_{\mu}(x) \}.
\end{equation}

First, we show that for every $y \in \text{spt}(\nu)$, we have the existence of a point $\tilde{x} \in \text{spt}(\mu)$ such that $\phi_{\nu}(y) = \tilde{x} \cdot y - \phi_{\mu}(\tilde{x})$. We note that equality in the Fenchel-Legendre dual formula~\eqref{eq:fenchel} is attained if and only if $x \in \text{Dom}(\phi_{\mu})$ and $y \in \partial \phi_{\mu}(x)$. However, we know that by the Brenier-McCann theorem, we have $\text{Cl} (\nabla \phi_{\mu} (F \cap \text{spt}(\mu)))  = \text{spt}(\nu)$. Thus, fix $y \in \text{spt}(\nu)$. We then have that there exists a sequence $(x_k, y_k)$ such that $y_k = \nabla \phi_{\mu}(x_k)$, $y_k \rightarrow y$ and $x_k \in \text{spt}(\mu)$. Using this sequence, we get:
\begin{equation}\label{eq:maximum}
\phi_{\nu}(y_k) = x_k \cdot y_k - \phi_{\mu}(x_k).
\end{equation}
Since $x_k \in \text{spt}(\mu) \subset \Omega$, we have that there exists a subsequence $\{ x_{k_{l}} \}$ such that $x_{k_{l}} \rightarrow \tilde{x}$, where $\tilde{x} \in \Omega$. Also, by the H\"{o}lder continuity of $\phi_{\mu}$ in $\Omega$, we have that $\phi_{\mu}(x_{k_{l}}) \rightarrow \phi_{\mu}(\tilde{x})$. Since $\phi_{\nu}$ is continuous on all of $\mathbb{R}^d$, since it is Lipschitz continuous by Lemma~\ref{lemma:lipschitz}, we have that $\phi_{\nu}(y_{k_{l}}) \rightarrow \phi_{\nu}(y)$. Hence, we take the limit of Equation~\eqref{eq:maximum} and get:
\begin{equation}
\phi_{\nu}(y) = \tilde{x} \cdot y - \phi_{\mu}(\tilde{x}).
\end{equation}
This shows the existence of $\tilde{x} \in \Omega$ such that Equation~\eqref{eq:fenchel} has a supremum. Using the point $\tilde{x}$, we obtain:
\begin{align}
\phi_{\nu}(y) &= \tilde{x} \cdot y - \phi_{\mu}(\tilde{x}) \\
\phi_{\nu;R}(y) &\geq \tilde{x} \cdot y - \phi_{\mu;R}(\tilde{x}).
\end{align}
Hence,
\begin{equation}
\phi_{\nu;R}(y) - \phi_{\nu}(y) \geq \phi_{\mu}(\tilde{x}) - \phi_{\mu;R}(\tilde{x}) \geq -k(R).
\end{equation}
Let $\tilde{x}_{R}$ designate any point that satisfies $\phi_{\nu;R}(y) = \tilde{x}_{R} \cdot y - \phi_{\mu;R}(\tilde{x}_{R})$. The existence of such a point, again, can be made by the above reasoning applied to the function $\phi_{\mu;R}$. Using this, point, we get:
\begin{equation}
\phi_{\nu;R}(y) - \phi_{\nu}(y) \leq  \phi_{\mu;R}(\tilde{x}_R) - \phi_{\mu}(\tilde{x}_R),
\end{equation}
applying the uniform bounds $\vert \phi_{\mu;R}(z) - \phi_{\mu}(z) \vert \leq k(R)$ for any $z$ then allows us to conclude:
\begin{equation}
\vert \phi_{\nu;R}(y) - \phi_{\nu}(y) \vert \leq  k(R).
\end{equation}
\end{proof}

For the case where $p \leq n$ or $p <4$, we do not have explicit rates, but are assured pointwise convergence on $\Omega$ by Theorem~\ref{thm:uniform} once $L^2$ rates are established. Before doing this, we start with a brief geometrical lemma, which shows that non-empty, convex domains $\Omega$ that satisfy the $\epsilon$-ball property have a boundary that is a Riemannian manifold, which allows us to assert the existence of a normal vector at each boundary point of $\Omega$.

\begin{lemma}
If $\Omega$ is non-empty, connected, convex, and satisfies the $\epsilon$-ball property, then $\partial \Omega$ is a smooth manifold.
\end{lemma}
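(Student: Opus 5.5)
The plan is to show that $\partial\Omega$ is locally the graph of a $C^1$ (indeed $C^{1,1}$) function. This is what the subsequent argument needs, namely a well-defined outward normal at every boundary point. I do not expect to prove $C^\infty$ smoothness, since it is false in general: a two-dimensional "stadium" (the Minkowski sum of a segment and a disc) is convex and has the inner $\epsilon$-ball property, but its boundary curvature jumps. So I would read "smooth manifold" as "$C^1$ hypersurface with a continuous unit normal" and state this explicitly.

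\textbf{Step 1: local graph structure.} Since $\Omega = S\oplus\overline{B_\epsilon(0)}$ is nonempty, it contains a ball, so $\text{int}(\Omega)\neq\emptyset$. Fix $x^*\in\partial\Omega$ and a supporting hyperplane $H$ at $x^*$, which exists by convexity. Choose coordinates with $x^*=0$ and $H=\{x_n=0\}$, with $\Omega\subset\{x_n\ge 0\}$. A standard convex-analysis argument then gives a neighbourhood $U$ of $0'$ in $\mathbb{R}^{n-1}$ and a convex, locally Lipschitz function $g:U\to\mathbb{R}$ such that, near $x^*$,
\begin{equation*}
\partial\Omega=\{(z,g(z)) : z\in U\} \quad\text{and}\quad \Omega=\{x_n\ge g(z)\}.
\end{equation*}
The argument views $\Omega$ from an interior point and uses that every line in the direction $e_n$ through an interior point meets $\partial\Omega$ exactly once on the lower side.

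\textbf{Step 2: uniqueness of supporting hyperplanes.} By the inner $\epsilon$-ball property there is $x_0$ with $x^*\in\partial B_\epsilon(x_0)$ and $\overline{B_\epsilon(x_0)}\subset\Omega$. Any supporting hyperplane $H'$ of $\Omega$ at $x^*$ leaves the ball on one side and touches it at $x^*$. Hence $H'$ is tangent to the sphere at $x^*$, and its normal must be $(x^*-x_0)/\epsilon$. So the supporting hyperplane at every boundary point is unique.

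\textbf{Step 3: from uniqueness to $C^1$.} In the graph picture, Step 2 says that $\partial g(z)$ is a singleton for every $z\in U$. By Rockafellar's results (Theorem 25.1 for differentiability, Theorem 25.5 for continuity of the gradient), $g$ is differentiable on $U$ and $\nabla g$ is continuous there. Covering the compact set $\partial\Omega$ by finitely many such charts, each of which is a $C^1$ map, shows that $\partial\Omega$ is a $C^1$ embedded hypersurface. Its outward unit normal is $\nu(x^*)=(x^*-x_0)/\epsilon$, which depends continuously on $x^*$.

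\textbf{Step 4 (optional upgrade): $C^{1,1}$.} The lower hemisphere of $\partial B_\epsilon(x_0)$ lies in $\Omega$, so in the local chart
\begin{equation*}
g(z)\le g(z_0)+\nabla g(z_0)\cdot(z-z_0)+C|z-z_0|^2,
\end{equation*}
with $C$ depending only on $\epsilon$, uniformly in $z_0$. Thus $g$ is both convex and uniformly semiconcave, which gives $\nabla g$ Lipschitz. The main obstacle I expect is Step 1: making the local graph representation rigorous with uniform chart sizes so that the finite cover works. Step 2, which is where the $\epsilon$-ball hypothesis enters, is the conceptual core but is elementary.
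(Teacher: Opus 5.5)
Your proposal is correct, and its key step is the paper's: the inner ball touching $x^*$ forces every supporting hyperplane of $\Omega$ at $x^*$ to be the tangent plane of $\partial B_\epsilon(x_0)$, so the normal at each boundary point is unique. The routes differ after that.

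\emph{How the paper proceeds.} The paper proves continuity of the tangent planes directly. It takes boundary points $x_k\to x$, extracts convergent unit normals in $\mathbb{S}^{n-1}$, notes that the limit plane supports $\Omega$ at $x$, and uses uniqueness. It then invokes the supporting hyperplane theorem to rule out self-intersections and declares $\partial\Omega$ a smooth manifold, without ever writing down a chart.

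\emph{How yours differs.} You build explicit convex graph charts. You then get differentiability and continuity of the normal from Rockafellar's Theorems 25.1 and 25.5, which package the same closedness-of-subdifferentials argument. So your version supplies the local coordinates the paper omits, and Step 4 adds the sharper $C^{1,1}$ regularity.

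\emph{Your caveat on regularity.} It is right. The stadium shows $\partial\Omega$ need not be $C^2$ as an embedded hypersurface. The paper's own argument likewise only yields a continuously varying tangent plane, i.e.\ a $C^1$ hypersurface. That is all the later lemmas on normals and the Gauss map use.

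\emph{A fix in Step 1.} As phrased, with an arbitrary supporting hyperplane $H$, Step 1 is not a standard fact. At the corner of a square, the boundary is not a graph over the line through the bottom edge. The graph structure here comes from the inner ball. Take $H$ to be the tangent plane of $\partial B_\epsilon(x_0)$ at $x^*$, which by Step 2 is the only supporting hyperplane. Then $x_0=x^*+\epsilon e_n$, and every vertical line over $\{|z|<\epsilon\}$ passes through $B_\epsilon(x_0)\subset\text{int}(\Omega)$. So just do Step 2 first.

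\emph{Two minor points.} Your concern about uniform chart sizes and a finite cover is unnecessary, since being an embedded $C^1$ hypersurface is a local property. The constant in Step 4 depends on $\epsilon$ and on a bound for $|\nabla g|$ in the chart, not on $\epsilon$ alone; locally uniform constants suffice.
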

\begin{proof}
We first note that if $\Omega$ satisfies the $\epsilon$-ball property, it has a non-empty, connected interior, since it is convex. Also, $\partial \Omega$ is a closed set, since $\Omega$ is closed. The set $\partial \Omega$ is of Hausdorff dimension $\mathbb{R}^{n-1}$.

For each $x \in \partial \Omega$, we have a point $x_0 \in \Omega$ such that $x \in \overline{B_{\epsilon}(x_0)}$. By the supporting hyperplane theorem, then, we see that a supporting hyperplane at $x$ for $\Omega$ is a supporting hyperplane at $x$ of $\overline{B_{\epsilon}(x_0)}$. Since $\overline{B_{\epsilon}(x_0)}$ is a closed ball, this is the unique supporting hyperplane and therefore $\partial \Omega$ has unique tangent planes at each point.

Moreover, let $x_{n} \rightarrow x$, where $x_{n} \in \partial \Omega$ and $x \in \Omega$ and let $B_{\epsilon}(z_{n})$ be a sequence of open Euclidean balls such that $x_{n} \in \overline{B_{\epsilon}(z_n)}$. Then, let $\mathfrak{T}_{x_{n}}$ be the sequence of uniquely defined tangent planes of $\overline{B_{\epsilon}(z_{n})}$ at $x_{n}$. Upon possible extraction of a sequence, since $\mathfrak{T}_{x_{n}}$ are defined by the sequence $x_{n}$, which is convergent, and normal directions of the tangent plane $\hat{n}_{\mathfrak{T}_{x_{n}}} \in \mathbb{S}^{n-1}$, we have that $\mathfrak{T}_{x_{n}} \rightarrow \mathfrak{T}_{x}$, where $\mathfrak{T}(x) \subset \mathbb{R}^n$ is an $n-1$ hyperplane passing through $x$. By the supporting hyperplane theorem, we must have that $\mathfrak{T}(x)$ equals the supporting hyperplane of $\partial \Omega$ passing through $x$. Thus, every sequence of tangent planes must converge to the same tangent plane. Thus, we have continuity of tangent planes. The supporting hyperplane theorem rules out the possibility of self-intersections.

Therefore, $\partial \Omega$ is a smooth manifold.
\end{proof}

In the context of our problem, if we assume that $\nu$ ``has mass at infinity in every direction" and the normal vector $\hat{n}_{x}$ at a point $x \in \partial \Omega$ are unique, then the Brenier potentials have a ``stadium-like effect", which means that the gradients of Brenier potentials will will point in the direction of the normal as one approaches the boundary and the gradients blow up if and only if the boundary is approached.

\begin{lemma}\label{lemma:gradientexplosiononboundary}
Let $\Omega$ be compact and convex and have nonempty interior. For points of differentiability $x_k \in \Omega$, we have that $\Vert \nabla \phi_{\mu}(x_k) \Vert \rightarrow + \infty$ only if $\text{dist}(x_k, \partial \Omega) \rightarrow 0$.
\end{lemma}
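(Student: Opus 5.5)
The plan is to prove the contrapositive. Suppose $\text{dist}(x_k, \partial \Omega) \not\rightarrow 0$. Then, after passing to a subsequence, there is $\delta>0$ with $\text{dist}(x_k, \partial \Omega) \geq 2\delta$ for all $k$. In other words, every $x_k$ lies in the compact set $K_{2\delta} := \{x \in \Omega : \text{dist}(x, \partial\Omega) \geq 2\delta\}$. It then suffices to show that $\Vert \nabla \phi_{\mu}(x_k)\Vert$ is bounded along this subsequence. This contradicts $\Vert \nabla \phi_{\mu}(x_k) \Vert \rightarrow +\infty$, since divergence to infinity passes to subsequences.

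The key step is a local Lipschitz bound for a convex function that is finite on a neighbourhood of a compact set. By Theorem~\ref{thm:brenier} and the argument at the start of the proof of Theorem~\ref{thm:uniform}, $\phi_{\mu}$ is finite on $\text{int}(\Omega)$. Alternatively, under the hypotheses of Section~\ref{sec:explicituniform}, it is H\"{o}lder continuous and hence bounded on $\Omega$. The set $K_{\delta} := \{x : \text{dist}(x,\partial\Omega) \geq \delta\}$ is compact and contained in $\text{int}(\Omega)$, so $\phi_{\mu}$ is continuous there. Set $M_\delta := \max_{K_\delta} \vert \phi_{\mu} \vert < \infty$.

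Now take $x = x_k \in K_{2\delta}$ and $z = \nabla \phi_{\mu}(x)$ with $z \neq 0$. The point $w = x + \delta z/\Vert z\Vert$ lies in $K_{\delta}$. The subgradient inequality from Definition~\ref{def:subdifferential} gives
\begin{equation*}
\phi_{\mu}(w) \geq \phi_{\mu}(x) + \delta \Vert z \Vert,
\end{equation*}
and therefore $\Vert \nabla \phi_{\mu}(x_k) \Vert \leq 2M_\delta/\delta$, uniformly in $k$. This is the standard estimate behind Rockafellar's result that convex functions are Lipschitz on compact subsets of the interior of their domain.

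The step needing the most care is justifying that $\phi_{\mu}$ is finite and continuous on a full neighbourhood of $K_{2\delta}$. Here I will use that $\text{spt}(\mu) = \Omega \subset \text{Cl}(\text{dom}(\phi_{\mu}))$ by Brenier's theorem, and that $\text{dom}(\phi_{\mu})$ is convex. Together these give $\text{int}(\Omega) \subset \text{dom}(\phi_{\mu})$. Continuity in the interior of the domain then follows from the discussion in Section~\ref{sec:theoreticalbackground}. One subtlety remains: Brenier's theorem only pins $\phi_{\mu}$ down on $\Omega$, but the argument above only evaluates $\phi_{\mu}$ at points of $K_\delta \subset \text{int}(\Omega)$, so the behaviour outside $\Omega$ is irrelevant. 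The condition that $\nu$ has mass at infinity is not needed for this direction; it would only be needed for the converse blow-up statement.
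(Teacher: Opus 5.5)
Your proof is correct, and it takes a different route from the paper's.

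\textbf{The paper's route.} The paper argues through limit points. It takes a sequence $z_k \to z'$ with $\Vert\nabla\phi_\mu(z_k)\Vert\to\infty$ and unit directions converging to some $\hat z$. It then asserts, via ``continuity of subdifferentials'', that ``$+\infty\in\partial\phi_\mu(z')$''. Finally it restricts $\phi_\mu$ to a line through $z'$: a one-dimensional convex function with infinite slope must equal $+\infty$ beyond that point, which would contradict finiteness of $\phi_\mu$ on $\text{int}(\Omega)$ if $z'$ were interior.

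\textbf{Your route.} You prove directly that the subdifferential is locally bounded on compact subsets of $\text{int}(\text{dom}\,\phi_\mu)$. You get the explicit bound $\Vert\nabla\phi_\mu(x)\Vert\le 2M_\delta/\delta$ on $\{x\in\Omega:\text{dist}(x,\partial\Omega)\ge 2\delta\}$, which is essentially Rockafellar's Theorem 24.7 made quantitative. What this buys:
\begin{enumerate}
\item It never uses the extended-valued object ``$+\infty\in\partial\phi_\mu(z')$''. That object has no meaning when subdifferentials are subsets of $\mathbb{R}^n$, and it is the weakest step of the paper's proof.
\item It gives a quantitative bound rather than a qualitative contradiction.
\item It shows that the unbounded support of $\nu$, which the paper invokes to produce a blow-up sequence, plays no role in this direction.
\item Your justification of $\text{int}(\Omega)\subset\text{dom}(\phi_\mu)$ is cleaner than the paper's argument about removing points from a convex set. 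It combines $\text{spt}(\mu)\subset\text{Cl}(\text{dom}\,\phi_\mu)$ with $\text{int}(\text{Cl}\,C)=\text{int}\,C$ for convex $C$; the latter applies here because $\text{Cl}(\text{dom}\,\phi_\mu)\supset\Omega$ forces the domain to have nonempty interior.
\end{enumerate}

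\textbf{Small fixes.}
\begin{enumerate}
\item Define $K_\delta:=\{x\in\Omega:\text{dist}(x,\partial\Omega)\ge\delta\}$. As you wrote it, without the restriction to $\Omega$, the set is neither compact nor contained in $\Omega$.
\item Say why $w=x_k+\delta z/\Vert z\Vert$ lies in $\Omega$. The open ball $B_{2\delta}(x_k)$ is connected, contains the interior point $x_k$, and misses $\partial\Omega$, so it lies in $\text{int}(\Omega)$.
\item You implicitly use that $\phi_\mu$ is proper, i.e.\ never $-\infty$. The paper checks this separately, and you should at least note it.
\end{enumerate}
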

\begin{proof}
From the Brenier theorem, see Theorem 4.2 in Maggi~\cite{maggi}, we get that $\phi_{\mu}(x) < \infty$ $\mu$-a.e. Since $\mu$ has a density function bounded away from zero in $\Omega$, and $\partial \Omega$ has measure zero, then $\phi_{\mu}(x)$ can only be infinite, if it is, on the boundary $\partial \Omega$. This is due to the fact that essential domain of $\phi_{\mu}$ is convex set, but that $\Omega \setminus \text{dom}(\phi_{\mu})$ has measure zero. Since removing $\partial \Omega$ only removes a set of measure zero, leaving $\text{int}(\Omega)$, which is still convex, then clearly $\phi_{\mu}$ might be infinite on the boundary. Removing any point from $\text{int}(\Omega)$ leaves a set that is no longer convex. So, we see that it is only on $\partial \Omega$ that $\phi_{\mu}$ may take infinite values and consequently, $\phi_{\mu}$ is finite for all $x \in \text{int}(\Omega)$. Furthermore, $\phi_{\mu}$ cannot equal $-\infty$ anywhere. If it did, then $\phi_{\mu}$ could only have finite values on the boundary of the set where $\phi_{\mu}$ equalled $-\infty$, which is a set of measure zero. Since $\phi_{\mu}$ is finite in $\text{int}(\Omega) \neq \emptyset$, we see that $\phi_{\mu}$ cannot equal $-\infty$ anywhere.


By the Brenier theorem, since the support of $\nu$ is unbounded, and the fact that $\phi_{\mu}$ is differentiable almost everywhere in $\text{int}(\Omega)$, we have that there exists a sequence $\{ z_k \}$ of points $z_k \in \text{int}(\Omega)$ such that $\Vert \nabla \phi_{\mu}(z_k) \Vert \rightarrow \infty$. Upon extraction of a subsequence, we may claim that there exists such a sequence satisfying $z_k \rightarrow z \in \Omega$. Now, we show that such a point $z$ can only be on the boundary. This is proved from the observation that once a one-dimensional convex function attains a ``slope" of $+\infty$, the function must be equal to $+\infty$ ``beyond" that point, which will be used to derive a contradiction in the interior of the essential domain where $\phi_{\mu}$ is finite.

Let $\{z_k \}$ be a sequence such that $z_k \rightarrow z' \in \Omega$ and $\Vert \nabla \phi_{\mu}(z_k) \Vert \rightarrow \infty$. For each $k$, there exists a direction vector $\hat{z}_k \in \mathbb{S}^{d-1}$ such that $\nabla \phi_{\mu}(z_k) = \hat{z}_k \vert \nabla \phi_{\mu}(z_k) \vert$. Since $\mathbb{S}^{d-1}$ is a compact set, upon further extraction of a subsequence, we have that $\hat{z}_{k} \rightarrow \hat{z} \in \mathbb{S}^{n-1}$. By the continuity of subdifferentials, $+ \infty \in \partial \phi_{\mu}(z')$. Since $\Omega$ has nonempty interior, there exists a point $z_0 \in \Omega$ such that $(z_0 - z')\cdot \hat{z} \neq 0$.

Suppose that $z' \in \text{int}(\Omega)$. Define the line from $z_0$ to $z'$ via $l(t) = ((\vert z' - z_0 \vert -t)z_0 + tz') /\vert z' - z_0 \vert$ for $t \in [0,\vert x' - z_0 \vert]$. Since we assumed that $z' \in \text{int}(\Omega)$, we may extend $l(t)$ to the boundary. Call the point on the boundary $z_b$. Thus, we define a new line $L(t) := ((\vert z_0 - z_b \vert -t)z_0 + tz') /\vert z' - z_b \vert$. Then, define the function $\psi(t) := \phi_{\mu}(L(t))$. The function $\psi(t)$ is a convex function on $[0, \vert z_b - z_0 \vert]$. Since $+ \infty \in \partial \phi_{\mu}(z')$, and since $(z_0 - z')\cdot \hat{z} \neq 0$ we have $+ \infty \in \partial \psi(t_0)$. Hence, for $t >t_0$, $\psi(t)  = +\infty$. Since there exists a value $t_m$ that satisfies $t_0 < t_m < \vert z_b - z_0 \vert$ such that $\psi(t_m) = +\infty$ and $L(t_m) \in \text{int}(\Omega)$, we have found a point in $\text{int}(\Omega)$ that equals $+\infty$, in contradiction to the fact that infinite values can only be obtained on the boundary. Therefore, $z' \in \partial \Omega$, which means that a gradient whose magnitude is equal to $+\infty$ can only exist on the boundary.
\end{proof}

\begin{lemma}\label{lemma:gradientexplosion}
Let $\Omega$, compact and convex and have a nonempty interior and let the Gauss map be injective (i.e. $\hat{n}_{x} \neq \hat{n}_{y}$ for $x \neq y \in \partial \Omega$). Assume that there exists a compact set $K \subset \mathbb{R}^n$ such that $\text{spt}(\nu) = \tilde{K} \cup (\mathbb{R}^n \setminus K)$, where $\tilde{K} \subset K$. For points of differentiability $x_k \in \Omega$, we have $\Vert \nabla \phi_{\mu}(x_k) \Vert \rightarrow + \infty$ iff $\text{dist}(x_k, \partial \Omega) \rightarrow 0$ and $\nabla \phi_{\mu}(x_k) /\vert \nabla \phi_{\mu}(x_K) \vert \rightarrow \hat{n}_{x}$ if $x_k \rightarrow x \in \partial \Omega$.
\end{lemma}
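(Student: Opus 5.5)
The plan is to split the statement into three parts: the ``only if'' direction of the blow-up, the ``if'' direction, and the convergence of the directions $\nabla \phi_{\mu}(x_k)/\vert \nabla \phi_{\mu}(x_k) \vert$ to $\hat{n}_{x}$. The ``only if'' direction is exactly Lemma~\ref{lemma:gradientexplosiononboundary}, so nothing new is needed there. Both remaining parts will use the monotonicity of the subdifferential, $(\nabla\phi_{\mu}(x) - \nabla\phi_{\mu}(x'))\cdot(x-x') \geq 0$. They will also use the fact from Theorem~\ref{thm:brenier} that $\text{Cl}(\nabla\phi_{\mu}(F\cap\Omega)) = \text{spt}(\nu)$, which by hypothesis contains the complement of the compact set $K$.

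For the direction statement, suppose $x_k \to x \in \partial\Omega$ and $\vert\nabla\phi_{\mu}(x_k)\vert \to \infty$. Write $y_k = \nabla\phi_{\mu}(x_k)$ and pass to a subsequence with $y_k/\vert y_k\vert \to \hat{z}$. Fix any interior point $x'$ of differentiability. Monotonicity gives $(y_k - \nabla\phi_{\mu}(x'))\cdot(x_k - x') \geq 0$. Dividing by $\vert y_k\vert$ and letting $k\to\infty$ yields $\hat{z}\cdot(x - x') \geq 0$. Such points are dense in $\Omega$, so $\hat{z}\cdot(x-w)\geq 0$ for all $w\in\Omega$, which means $\hat{z}$ is an outer normal of a supporting hyperplane at $x$. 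To make the normal unique, I would use the paper's smoothness lemma for the $\epsilon$-ball case, or more minimally argue that the normal cone at $x$ is a single ray. Then $\hat{z} = \hat{n}_{x}$. Since every subsequence has the same limit, the whole sequence converges.

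For the ``if'' direction, suppose $\text{dist}(x_k,\partial\Omega)\to 0$ but $\vert y_k\vert$ stays bounded along a subsequence. After extraction, $x_k \to x\in\partial\Omega$ and $y_k\to y$. By closedness of the subdifferential graph (Rockafellar Theorem 24.4), $y \in \partial\phi_{\mu}(x)$, so $\phi_{\mu}$ is finite at $x$ and admits the bounded subgradient $y$ there. The aim is a contradiction with the mass of $\nu$ in direction $\hat{n}_x$. Choose $t$ large so that $y_t := t\hat{n}_{x} \notin K$. Then $y_t \in \text{spt}(\nu)$, and there are differentiability points $w_j$ with $\nabla\phi_{\mu}(w_j)\to y_t$; by compactness, $w_j\to w\in\Omega$ along a subsequence. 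Applying the closed-graph property again gives $y_t\in\partial\phi_{\mu}(w)$. Monotonicity between $(x,y)$ and $(w,y_t)$ gives $(t\hat{n}_x - y)\cdot(w-x)\geq 0$, so $\hat{n}_x\cdot(w-x) \geq -C/t$. Since $\hat{n}_x\cdot(w-x)\leq 0$ for every $w\in\Omega$, this forces $w = w(t)$ toward the face $\{w\in\Omega: \hat{n}_x\cdot(w-x)=0\}$. Injectivity of the Gauss map makes that face the single point $x$, so $w(t)\to x$ as $t\to\infty$.

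To finish, I would run the same argument with $\hat{e}$ a direction slightly rotated from $\hat{n}_x$, which works because $\text{spt}(\nu)$ contains all far directions. The resulting points $w$ then approach the point $x' \neq x$ whose normal is $\hat{e}$. Using the subgradient $y$ at $x$, I would show that $\phi_{\mu}$ restricted to the boundary arc near $x$ would have to carry subgradients of arbitrarily large size in directions converging to $\hat{n}_x$, which together with the finite slope $y$ violates cyclical monotonicity. This last step is the main obstacle, and I am not sure the contradiction closes as described. If it does not, my fallback is a mass argument: bounded subgradients near $x$ and continuity of $\partial\phi_{\mu}$ would imply that $\mu$-almost all mass sent to a cone around $\hat{n}_x$ far out comes from points $x_k$ with large gradients accumulating only at $x$, a $\mu$-null set, contradicting $\nu(\{\text{cone}\}\setminus K) > 0$.
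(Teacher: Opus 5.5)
Your ``only if'' direction and your direction statement are fine and are essentially the paper's argument. The paper uses the supporting-hyperplane inequality $\phi_{\mu}(x)\ge\phi_{\mu}(x_k)+\nabla\phi_{\mu}(x_k)\cdot(x-x_k)$ at an interior $x$, together with a lower bound for $\phi_\mu$. Your monotonicity version $(y_k-\nabla\phi_\mu(x'))\cdot(x_k-x')\ge 0$ avoids the lower bound and places every limit direction directly in the normal cone at $x$. One small point: uniqueness of $\hat{n}_x$ has to be read into the hypothesis, since it is what ``Gauss map'' presupposes. It cannot be derived from compactness, convexity and injectivity, and the $\epsilon$-ball lemma is not available here.

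The genuine gap is the ``if'' direction, and it is more than an unfinished last step.
\begin{itemize}
\item \textbf{The main line cannot reach a contradiction.} Once you have $y\in\partial\phi_\mu(x)$ with $x\in\partial\Omega$, what you invoke is cyclical monotonicity and closedness of the graph of $\partial\phi_\mu$. Both hold for \emph{every} closed convex function. Moreover, with $\phi_\mu=+\infty$ off $\Omega$ one automatically has $y+t\hat{n}_x\in\partial\phi_\mu(x)$ for all $t\ge 0$. So your points $w(t)\to x$ are exactly what should happen, not a sign of contradiction.
\item \textbf{The fallback's premise is false.} The far part of a cone around $\hat{n}_x$ is the image of a neighbourhood of a whole boundary arc around $x$, which has positive $\mu$-measure. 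The preimage concentrates at $x$ only after you shrink the cone to the ray $y+\mathbb{R}_+\hat{n}_x$. But then the $\nu$-mass of the shrinking cone tends to $0$ anyway. You would need a quantitative lower bound on $\nu$ near that ray, and a support hypothesis does not give one.
\end{itemize}

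In fact support alone is not enough. Take the unit disk with $\mu$ uniform and $\phi(w)=|w-e_1|^4/(1-|w|^2)^2$.
\begin{itemize}
\item $\phi$ is convex: it is the square of $|w-e_1|^2/(1-|w|^2)$, which is the perspective $|u|^2/s$ composed with the affine $u=w-e_1$ and the concave $s=1-|w|^2$.
\item On the horocycle $\{|w-e_1|^2=c(1-|w|^2)\}$ one computes $\nabla\phi(w)=\frac{2c(1+c)^2}{1-\cos\theta}\hat{\theta}$. Here $\hat{\theta}=(\cos\theta,\sin\theta)$ is the outward unit normal of that circle at $w$.
\item Hence $\nabla\phi$ is a diffeomorphism of the open disk onto $\mathbb{R}^2\setminus\{te_1:t\ge 0\}$, and $\nu:=(\nabla\phi)_\#\mu$ has support $\mathbb{R}^2$.
\item Yet $\nabla\phi((1-\delta)e_1)\to 0$ as $\delta\to 0$.
\end{itemize}
This $\nu$ is heavy-tailed (not even in $\mathcal{P}_1$), so it does not refute the lemma under the paper's standing moment assumptions. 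It does, however, satisfy every hypothesis your argument actually uses. A correct proof must therefore exploit moment or density information on $\nu$, and neither your main line nor your fallback does.

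The paper's own proof does not supply this step either. It shows that each boundary point is the limit of \emph{some} blowing-up sequence, via bijectivity of the Gauss map. It then simply asserts blow-up along every sequence converging to that point.
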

\begin{proof}
From Lemma~\ref{lemma:gradientexplosiononboundary}, we have that the gradients diverge, if they do, on $\partial \Omega$.

By our assumptions on the support of $\nu$ and the Brenier theorem, the mapping must take mass to every direction at infinity. That is, for every direction $\hat{z} \in \mathbb{S}^{d-1}$, we have that there exists a sequence $\{ x_k \}$ such that $\Vert \nabla \phi_{\mu}(x_k) \Vert \rightarrow \infty$ and $\nabla \phi_{\mu}(x_k) / \Vert \nabla \phi_{\mu}(x_k) \Vert \rightarrow \hat{z}$. Let $\{ x_k \}_{k}$ be any such sequence. We will now show that if $x_k \rightarrow x' \in \partial \Omega$, then $\nabla \phi_{\mu}(x_k) / \Vert \nabla \phi_{\mu}(x_k) \Vert \rightarrow \hat{n}_{x}$. We show that mass at $x'$ cannot be taken to infinity in any other direction. Suppose that $\hat{z} \neq \hat{n}_{x}$ and $x_k \rightarrow x$ but $\vert \nabla \phi_{\mu}(z_k) \vert \rightarrow \infty$ but $\nabla \phi_{\mu}(x_k) / \Vert \nabla \phi_{\mu}(x_k) \Vert \rightarrow \hat{z}$. In this case, since $\phi_{\mu}$ lies above its supporting hyperplanes, but as we approach the boundary, we would have a vertical hyperplane that is not tangent to $\partial \Omega$. More precisely, fix $x \in \text{int}(\Omega)$ such that there exists a constant $m>0$ and a value $K > 0$ such that $\hat{z} \cdot (x - x_k) \geq m>0$ for all $k \geq K$. This is possible, since $\hat{z} \neq \hat{n}_{x'}$ and thus there exist points in $\text{int}(\Omega)$ that are on either side of the half-space defined by the hyperplane with normal $\hat{z}$ passing through the point $x'$. Thus, since $\phi_{\mu}$ is convex, it satisfies:
\begin{equation}
\phi_{\mu}(x) \geq \phi_{\mu}(x_k) + \nabla \phi_{\mu}(x_k) \cdot (x - x_k).
\end{equation}
By the fact that the direction of the gradient converges to $\hat{z}$, we have that there exists a $K_2 >0$ such that for all $k \geq K_2$, we have:
\begin{equation}
\phi_{\mu}(x) \geq \phi_{\mu}(x_k) + \frac{m}{2} \vert \nabla \phi_{\mu}(x_k) \vert.
\end{equation}
Since $\Omega$ is compact, $\phi_{\mu}(x_k) \geq M \in \mathbb{R}$. Therefore we have:
\begin{equation}
\phi_{\mu}(x) \geq M + \frac{m}{2} \vert \nabla \phi_{\mu}(x_k) \vert.
\end{equation}
Therefore, as $k \rightarrow \infty$, we get that $\phi_{\mu}(x) = + \infty$. Since $x \in \text{int}(\Omega)$, we have a contradiction. Thus, we have showed that any sequence of points $\{ x_k \}$ such that $x_k \rightarrow x' \in \partial \Omega$ and the gradients at $x_k$ blow up as $k \rightarrow \infty$ must converge to the normal direction. Otherwise, we could take a subsequence that did not converge to $\hat{x'}$ and derive a contradiction.

Since the normal vectors are unique there is a one-to-one correspondence between $x \in \partial \Omega$ and $\hat{n} \in \mathbb{S}^{n-1}$. Therefore, any sequence $\{ x_{k} \}$ such that $x_k \rightarrow x \in \partial \Omega$ satisfies $\nabla \phi_{\mu}(x_k) / \vert \nabla \phi_{\mu}(x_k) \vert \rightarrow \hat{n}_{x}$ and $\vert \phi_{\mu}(x_k) \vert \rightarrow \infty$.
\end{proof}

\begin{theorem}\label{thm:pleqninverse}
For $p\leq n$ or $p<4$, assume that $\phi_{\mu;R}$ and $\phi_{\mu}$ satisfy $\phi_{\mu;R}(x) \rightarrow \phi_{\mu}(x)$ for any $x \in \text{int}(\Omega)$. Assume that there exists a compact set $K \subset \mathbb{R}^n$ such that $\text{spt}(\nu) = \tilde{K} \cup (\mathbb{R}^n \setminus K)$, where $\tilde{K} \subset K$ and that the Gauss map of $\partial \Omega$ is injective. Then, $\phi_{\nu}$ and $\phi_{\nu;R}$ satisfy $\phi_{\nu;R}(y) \rightarrow \phi_{\nu}(y) \ \forall y \in \text{spt}(\nu)$.
\end{theorem}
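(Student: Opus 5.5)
The plan is to mirror the proof of Theorem~\ref{thm:pgn}, using the Legendre duality $\phi_{\nu}(y) = \sup_{x}\{x\cdot y - \phi_{\mu}(x)\}$ and its analogue for $\phi_{\nu;R}$. The difference is that uniform convergence of $\phi_{\mu;R}$ on all of $\Omega$ is not available, so I will instead show that the maximizers can be localized in a compact subset of $\text{int}(\Omega)$. There Theorem~\ref{thm:uniform} gives uniform convergence. Fix $y \in \text{spt}(\nu)$.

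\textbf{Step 1 (a maximizer for $\phi_{\nu}(y)$ in the interior).} Using $\text{Cl}(\nabla\phi_{\mu}(F\cap\text{spt}(\mu))) = \text{spt}(\nu)$, pick $x_k \in F$ with $\nabla\phi_{\mu}(x_k) = y_k \to y$. Since $\vert y_k\vert$ stays bounded, Lemma~\ref{lemma:gradientexplosion} (the ``iff'' direction) shows that $\text{dist}(x_k,\partial\Omega)$ stays bounded away from zero along a subsequence. Hence $x_k \to \tilde{x} \in \text{int}(\Omega)$.

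On $\text{int}(\Omega)$ the potential $\phi_{\mu}$ is finite and convex, hence continuous. The graph of the subdifferential is closed, so $y \in \partial\phi_{\mu}(\tilde{x})$. Therefore $\phi_{\nu}(y) = \tilde{x}\cdot y - \phi_{\mu}(\tilde{x})$. This gives the lower bound
\[
\phi_{\nu;R}(y) - \phi_{\nu}(y) \geq \phi_{\mu}(\tilde{x}) - \phi_{\mu;R}(\tilde{x}) \to 0,
\]
by pointwise convergence at the interior point $\tilde{x}$.

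\textbf{Step 2 (the upper bound).} I need maximizers $\tilde{x}_R$ with $y \in \partial\phi_{\mu;R}(\tilde{x}_R)$, and I need them to lie in a fixed compact set $K' \subset \text{int}(\Omega)$ for all large $R$. Then
\[
\phi_{\nu;R}(y) - \phi_{\nu}(y) \leq \phi_{\mu;R}(\tilde{x}_R) - \phi_{\mu}(\tilde{x}_R) \leq \sup_{K'}\vert\phi_{\mu;R} - \phi_{\mu}\vert \to 0,
\]
again by Theorem~\ref{thm:uniform}.

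Existence of $\tilde{x}_R$ for $y \in \text{spt}(\nu_R) = \text{spt}(\nu)\cap \overline{B_R(0)}$ (with $R$ large) follows as in Step 1. For the cutoff problem the gradients are bounded by $R$, so the potentials are finite up to the boundary, and compactness of $\Omega$ suffices for existence.

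\textbf{Main obstacle: keeping $\tilde{x}_R$ away from $\partial\Omega$ uniformly in $R$.} I would argue by contradiction. Suppose $\tilde{x}_{R_j} \to x' \in \partial\Omega$ with $y \in \partial\phi_{\mu;R_j}(\tilde{x}_{R_j})$. By the convergence $\phi_{\mu;R}\to\phi_{\mu}$ on $\text{int}(\Omega)$ and the subdifferential stability of Rockafellar Theorem 24.5, $y$ should be a limit subgradient of $\phi_{\mu}$ at $x'$.

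The contradiction should then come from Lemma~\ref{lemma:gradientexplosion}. Near $x'$, the gradients of $\phi_{\mu}$ blow up in the normal direction $\hat{n}_{x'}$, and the mass at infinity in every direction (the assumption on $K$) forces this. Concretely, I would fix an interior point $z$ and use the supporting inequality
\[
\phi_{\mu;R_j}(z) \geq \phi_{\mu;R_j}(\tilde{x}_{R_j}) + y\cdot(z - \tilde{x}_{R_j}).
\]
I would combine it with the convexity picture near the boundary: $\phi_{\mu}$ grows like $\vert\nabla\phi_{\mu}\vert\to\infty$ along inward normals, and injectivity of the Gauss map makes this growth uniform near $x'$. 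The aim is to show that a bounded slope $y$ cannot support $\phi_{\mu;R_j}$ arbitrarily close to $\partial\Omega$ once $j$ is large. This transfer of the boundary gradient blow-up from $\phi_{\mu}$ to the approximations $\phi_{\mu;R_j}$, which only converge in the interior, is the step I expect to be delicate.

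If it resists, a fallback avoids locating the maximizer altogether. Continuity of $\phi_{\nu}$ and equi-Lipschitz bounds in a neighbourhood of $y$ (Lemma~\ref{lemma:lipschitz} adapted, using local boundedness of the interior gradients) would give equicontinuity. Since $\phi_{\nu;R}$ are convex, I would then try to show $\limsup_R \phi_{\nu;R}(y) \leq \phi_{\nu}(y)$ through the Legendre transform's upper semicontinuity under interior epi-convergence, that is, Attouch/Wijsman continuity of conjugation under epi-convergence on $\text{int}(\Omega)$, extended to the closure by lower semicontinuous regularization.
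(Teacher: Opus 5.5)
\textbf{Verdict: genuine gap.} Your main route has the same architecture as the paper's proof, but it leaves the decisive step open. Your fallback, if carried out, would close it by a different and stronger route.

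\textbf{What matches and is fine.} You place a maximizer $\tilde x$ for $\phi_\nu(y)$ in $\text{int}(\Omega)$ via Lemma~\ref{lemma:gradientexplosion}. You get $\liminf_R(\phi_{\nu;R}(y)-\phi_\nu(y))\ge 0$ from convergence at $\tilde x$. You get the reverse inequality by keeping the maximizers $\tilde x_R$ away from $\partial\Omega$. Step~1 and both comparison inequalities are correct. (The upper bound should read $\phi_\mu(\tilde x_R)-\phi_{\mu;R}(\tilde x_R)$; this is harmless once you take absolute values.)

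\textbf{The gap.} The missing step is confining $\tilde x_R$ to a compact subset of $\text{int}(\Omega)$, and neither ingredient you sketch can supply it.
First, Theorem~24.5 of Rockafellar only controls subgradients at points converging to a point of the open set where $\phi_{\mu;R}\to\phi_\mu$. At $x'\in\partial\Omega$ it says nothing, and the cutoff potentials are even $R$-Lipschitz up to $\partial\Omega$, so they inherit none of the boundary behaviour of $\phi_\mu$.
Second, the single supporting inequality at a fixed interior $z$ only bounds $\phi_{\mu;R_j}(\tilde x_{R_j})$ from above. A contradiction from it would need the \emph{values} of the potentials to blow up at $\partial\Omega$, and gradient blow-up does not give that. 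Indeed $\phi_\mu$ can stay bounded up to $\partial\Omega$: in dimension one it always does, since $\phi_\mu'\in L^1$.

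\textbf{How to close it.} What is missing is a comparison of \emph{slopes} at a point a fixed distance inside.
Take $z_0\in\text{int}(\Omega)$, set $d=z_0-x'$, fix $s\in(0,1)$, put $w_j=\tilde x_{R_j}+sd$, and pick $g_j\in\partial\phi_{\mu;R_j}(w_j)$.
Since $y$ is a subgradient of $\phi_{\mu;R_j}+\iota_\Omega$ at $\tilde x_{R_j}$, monotonicity of the subdifferential gives $g_j\cdot d\ge y\cdot d$.
Now $w_j\to x'+sd\in\text{int}(\Omega)$, where Theorem~24.5 does apply. Hence some $g_s\in\partial\phi_\mu(x'+sd)$ satisfies $g_s\cdot d\ge y\cdot d$.
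Let $s\downarrow 0$. By Lemma~\ref{lemma:gradientexplosion} the gradient magnitudes blow up and their directions tend to $\hat n_{x'}$, with $\hat n_{x'}\cdot d<0$. Combined with Rockafellar's Theorem~25.6, this forces $g_s\cdot d\to-\infty$, a contradiction.
The paper's own proof handles this step by invoking Theorem~24.5 at an auxiliary interior point $x''$, but applies it to subgradients at $\tilde x_{R_k}\to x'$, which that theorem does not cover. So it is not a template to copy; the inward displacement plus monotonicity is what makes the idea rigorous.

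\textbf{Your fallback.} Carried out, it is a genuinely different and stronger route.
Let $f_R=\phi_{\mu;R}+\iota_\Omega$ and let $f$ be the lower semicontinuous hull of $\phi_\mu+\iota_\Omega$. The hypothesis gives $f_R\to f$ pointwise on the dense set $\text{int}(\Omega)\cup(\mathbb{R}^n\setminus\Omega)$, hence $f_R$ epi-converges to $f$ (Rockafellar--Wets, Thm.~7.17).
Wijsman's theorem transfers this to the conjugates $\phi_{\nu;R}=f_R^*$ and $\phi_\nu=f^*$.
These are finite convex functions on all of $\mathbb{R}^n$, and they are equi-Lipschitz by Lemma~\ref{lemma:lipschitz}. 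For such functions epi-convergence is the same as locally uniform convergence.
This gives $\phi_{\nu;R}\to\phi_\nu$ at every $y\in\mathbb{R}^n$. It needs no maximizer to be located, and it does not use the Gauss-map hypothesis, the assumption on $K$, or Lemma~\ref{lemma:gradientexplosion}.
As written, however, you only name this route rather than execute it, so the proposal does not yet constitute a proof.
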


\begin{proof}
The notation in this proof follows some notation from Theorem~\ref{thm:pgn}. For a fixed $y \in \text{spt}(\nu)$, the supremum of the concave function $\psi(x) = x \cdot y - \phi_{\mu}(x)$ occurs at a point $\tilde{x}$ for which $0 \in \partial \psi(\tilde{x})$. For $x_{k} \in F$, where $F \subset \text{int}(\Omega)$ is the set of points of differentiability of $\phi_{\mu}$, we have $\partial \psi(x_k) = \{ y - \nabla \phi_{\mu}(x_k) \}$. For $x_k$ converging to the boundary, we have that $\vert \nabla \phi_{\mu}(x_k) \vert \rightarrow \infty$ by Lemma~\ref{lemma:gradientexplosion}. By the continuity of subdifferentials, this applies to the subdifferential of $\psi$ as well. Hence, for $x$ near enough to the boundary, we cannot have $0 \in \partial \psi(x)$. Hence, $\tilde{x}$ such that $\phi_{\nu}(y) = \tilde{x} \cdot y - \phi_{\mu}(\tilde{x})$ must satisfy $\tilde{x} \in \text{int}(\Omega)$.


From this, we deduce, for every $y \in \mathbb{R}^d$:
\begin{equation}\label{eq:xtilde}
\phi_{\nu;R}(y) - \phi_{\nu}(y) \geq \phi_{\mu}(\tilde{x}) - \phi_{\mu;R}(\tilde{x}) \rightarrow 0,
\end{equation}
since by $\tilde{x} \in \text{int}(\Omega)$ we can apply the pointwise convergence in Theorem~\ref{thm:uniform}.


We now show that $\tilde{x}_{R}$ cannot march to the boundary. By definition, $y \in \partial \phi_{\mu;R}(\tilde{x}_{R})$. Suppose that $\tilde{x}_{R_{k}} \rightarrow x' \in \partial \Omega$. Fix $\delta_1>0$. We choose a sequence $\{ \tilde{\tilde{x}}_{R_{k}} \}_{k}$ where $\tilde{\tilde{x_{R_{k}}}} \rightarrow x'' \in \text{int}(\Omega)$ and $\vert x' - x'' \vert \leq \delta_1$. Since $x''$ is in the interior and we have pointwise convergence of $\phi_{\mu;R}$ to $\phi_{\mu}$ in the interior, using Rockafellar~\cite{rockafellar} Theorem 24.5 we get that for every $\epsilon>0$, there exists an index $k_0$ such that for all $k \geq k_0$, we have
\begin{equation}\label{eq:close}
\partial \phi_{\mu;R_{k}}(\tilde{x}_{R_{k}}) \subset \partial \phi_{\mu} (x'') + \epsilon B_{1}(0).
\end{equation}
By the continuity of subdifferentials, we can decrease the size of $\delta_1$ and consequently $\partial \phi_{\mu}(x'')$ in contained in a set of values arbitrarily close to $\nabla \phi_{\mu}(x''')$, where $x'''$ is a point of differentiability arbitrarily close to $x''$ and to the boundary $\partial \Omega$. Thus, by decreasing the size of $\delta_1$, $\partial \phi_{\mu}(x'')$ is taken arbitrarily far from the value $y$, since by continuity it must be arbitrarily close to $\nabla \phi_{\mu}(x''')$, which diverges in magnitude. But, also by Equation~\eqref{eq:close}, we have $y \in \partial \phi_{\mu} (x'') + \epsilon B_{1}(0)$. By decreasing the size of $\epsilon$, we see that $\partial \phi_{\mu}(x'')$ cannot be taken arbitrarily far from $y$ as $\delta_1$ decreases. Therefore, we have a contradiction and hence $\tilde{x}_{R}$ does not converge to the boundary.


For every convergent subsequence $\tilde{x}_{R_{k}}$, we get that $\tilde{x}_{R_{k}} \rightarrow x' \in \text{int}(\Omega)$, where $x'$ depends on the choice of subsequence, but is not on the boundary. Using this, we can show:
\begin{equation}\label{eq:xtilde2}
\phi_{\nu;R_{k}}(y) - \phi_{\nu}(y) \leq \phi_{\mu}(\tilde{x}_{R_{k}}) - \phi_{\mu;R_{k}}(\tilde{x}_{R_{k}}).
\end{equation}
By the continuity of $\phi_{\mu}$ and $\phi_{\mu;R}$ in $\text{int}(\Omega)$, for any $\epsilon>0$ we have for large enough $k$, that:
\begin{equation}\label{eq:xRtilde}
\phi_{\nu;R_{k}}(y) - \phi_{\nu}(y) \leq \phi_{\mu}(x') - \phi_{\mu;R_{k}}(x') + \epsilon.
\end{equation}
Taking the limit in $k$ and using Theorem~\ref{thm:uniform} since $x' \in B_{\tilde{R}}(0)$, we get:
\begin{equation}
\lim_{k \rightarrow 0} \phi_{\nu;R_{k}}(y) - \phi_{\nu}(y) \leq \epsilon.
\end{equation}
Since the choice of $\epsilon$ was arbitrary, we get:
\begin{equation}\label{eq:subseq}
\lim_{k \rightarrow 0} \phi_{\nu;R_{k}}(y) - \phi_{\nu}(y) \leq 0.
\end{equation}
Since every convergent subsequence $\{ \tilde{x}_{R_{k}} \}$ leads to Inequality~\eqref{eq:subseq}, we conclude that any sequence $\{ \tilde{x}_{R_{k}} \}$ leads to the conclusion in Inequality~\eqref{eq:subseq}. Suppose this were not true, i.e. there existed a sequence $\{ \tilde{x}_{R_{l}} \}$ such that for every $l_0 \in \mathbb{N}$, there exists an $l \geq l_0$ such that $\phi_{\nu;R_{l}}(y) > \phi_{\nu}(y)$. Form the subsequence $\{ \tilde{x}_{R_{l'}} \}$ where
\begin{equation}\label{eq:strict}
\phi_{\nu;R_{l'}}(y) > \phi_{\nu}(y),
\end{equation}
for all $l'$. This subsequence may not be convergent, but it has a convergent subsequence, which we call $\{ \tilde{x}_{R_{l''}} \}$. Following the reasoning above, this convergent subsequence must satisfy Inequality~\eqref{eq:subseq}, contradicting Inequality~\eqref{eq:strict}. Thus, we are forced to conclude that for any sequence $\{ \tilde{x}_{R_{k}} \}$ we have $\lim_{k \rightarrow \infty} \phi_{\nu;R_{k}}(y) \leq \phi_{\nu}(y)$. Therefore,
\begin{equation}
\lim_{R \rightarrow \infty} \phi_{\nu;R}(y) - \phi_{\nu}(y) \leq 0.
\end{equation}
Along with Inequality~\eqref{eq:xtilde}, we get:
\begin{equation}
\lim_{R \rightarrow \infty} \vert \phi_{\nu;R}(y) - \phi_{\nu}(y) \vert = 0.
\end{equation}

\end{proof}

For the general case $p \geq 2$, we do not expect convergence on all of $\Omega$. However, we do believe that rates of convergence can be established for some subsets of $\Omega$, notably subsets $\Omega '$ where $\text{dist}(\Omega ', \partial \Omega)>0$. If such rates have been established, see Remark~\ref{rmk:geq2rates}, then we can bootstrap these to get a rate for the inverse Brenier potentials for the case $p \geq 2$.

\begin{proposition}\label{prop:bootstrap}
Let $\vert \phi_{\mu;R}(x) - \phi_{\mu}(x) \vert \leq C(\Omega ')k(R)$ for all $x \in \Omega' \subset \Omega$ for all $\Omega ' \subset \Omega$ such that $\text{dist}(\Omega ', \partial \Omega)>0$. Then, there exists an $R_0>0$ and a constant $C = C(\Omega '(y))>0$ such that for all $R \geq R_0$ we have:
\begin{equation}
\vert \phi_{\nu;R}(y) - \phi_{\nu}(y) \vert \leq C(\Omega '(y))k(R),
\end{equation}
for all $y \in \text{spt}(\nu)$. As indicated, the constant $C(\Omega '(y))$ need not be uniform in $y$.
\end{proposition}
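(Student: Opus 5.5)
The plan is to follow the two-sided Legendre-transform argument of Theorem~\ref{thm:pgn}, but with the uniform bound on $\Omega$ replaced by the bound on an interior set $\Omega'$. To make this possible, I will use the localization of maximizers from the proof of Theorem~\ref{thm:pleqninverse}, in a quantitative, $R$-uniform form. Fix $y \in \text{spt}(\nu)$. Let $\tilde{x}$ be a maximizer of $x \mapsto x\cdot y - \phi_{\mu}(x)$, and let $\tilde{x}_R$ be a maximizer of $x \mapsto x \cdot y - \phi_{\mu;R}(x)$; both exist by the compactness argument of Theorem~\ref{thm:pgn}. Exactly as there, we have
\begin{equation*}
\phi_{\mu}(\tilde{x}) - \phi_{\mu;R}(\tilde{x}) \leq \phi_{\nu;R}(y) - \phi_{\nu}(y) \leq \phi_{\mu}(\tilde{x}_R) - \phi_{\mu;R}(\tilde{x}_R).
\end{equation*}
So it suffices to find a set $\Omega'(y)$ with $\text{dist}(\Omega'(y),\partial\Omega)>0$ containing $\tilde{x}$ and all $\tilde{x}_R$ for $R \geq R_0$. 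The hypothesis then gives $\vert \phi_{\nu;R}(y)-\phi_{\nu}(y)\vert \leq C(\Omega'(y))k(R)$.

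For $\tilde{x}$, I use that $y \in \partial\phi_{\mu}(\tilde{x})$. By Lemma~\ref{lemma:gradientexplosiononboundary} and Lemma~\ref{lemma:gradientexplosion}, under the standing assumptions used in Theorem~\ref{thm:pleqninverse}, subgradients of $\phi_{\mu}$ blow up near $\partial\Omega$. Since $\vert y\vert<\infty$, $\tilde{x}$ must lie in the interior, at some distance $2\delta(y)>0$ from $\partial\Omega$.

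The main step is to show that $\tilde{x}_R$ stays at distance at least $\delta(y)$ from $\partial\Omega$ for all $R\geq R_0$, which is needed uniformly in $R$. I would argue by contradiction, following the second paragraph of the proof of Theorem~\ref{thm:pleqninverse}. Suppose $R_k\to\infty$ with $\text{dist}(\tilde{x}_{R_k},\partial\Omega)\to 0$. Choose interior points $x''$ near the limit point $x'\in\partial\Omega$. By convexity, monotonicity of the subdifferential gives
\begin{equation*}
(y - z)\cdot(\tilde{x}_{R_k}-x'')\geq 0 \quad\text{for } z \in \partial\phi_{\mu;R_k}(x'').
\end{equation*}
By Rockafellar Theorem~24.5, $\partial\phi_{\mu;R_k}(x'')$ approaches $\partial\phi_{\mu}(x'')$, and the pointwise convergence on interior points is supplied by the hypothesis itself (take $\Omega'=\{x''\}$). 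By Lemma~\ref{lemma:gradientexplosion}, $\partial\phi_{\mu}(x'')$ is huge and points along the outward normal $\hat{n}_{x'}$, while $\tilde{x}_{R_k}-x''$ is roughly along $\hat n_{x'}$ as well. This makes the left side negative for $x''$ close enough to $x'$, which is a contradiction.

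I expect this uniform localization to be the real obstacle: it is where the qualitative blow-up of gradients has to be converted into an $R$-independent margin. If the monotonicity argument proves delicate, my fallback is a compactness argument. Any limit point $x^*$ of $\tilde{x}_{R_k}$ maximizes $x\cdot y-\phi_{\mu}(x)$, by lower semicontinuity and the $\Gamma$-type convergence implied by pointwise interior convergence of convex functions. Hence $x^* \in \text{int}(\Omega)$, which already yields a threshold $R_0(y)$ beyond which $\tilde{x}_R\in\Omega'(y):=\{x:\text{dist}(x,\partial\Omega)\geq\delta(y)\}$. This explains why $C$ and $R_0$ depend on $y$ non-uniformly.
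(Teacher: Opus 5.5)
Your proposal is correct and follows the same route as the paper: the two-sided Legendre sandwich from Theorem~\ref{thm:pgn}, localization of the maximizers in $\text{int}(\Omega)$ through the gradient blow-up lemmas as in the proof of Theorem~\ref{thm:pleqninverse}, and then the hypothesis applied on a $y$-dependent interior set $\Omega'(y)$. Your version is tighter on the one delicate point, because you place the maximizers $\tilde{x}_R$ themselves in a fixed $\Omega'(y)$ for all $R \geq R_0$ and so obtain $C(\Omega'(y))k(R)$ directly, whereas the paper goes through a limit point $x'$ and Inequality~\eqref{eq:xRtilde}, which carries an additive $\epsilon$ (one small correction: justify the existence of $\tilde{x}$ by lower semicontinuity of $\phi_{\mu}$ on the compact $\Omega$, since the H\"{o}lder continuity used in Theorem~\ref{thm:pgn} is not available in this regime).
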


\begin{proof}
Like the proof of Theorem~\ref{thm:pleqninverse}, we simply need to find, for a given $y \in \text{spt}(\nu)$, a subset $\Omega '(y) \subset \Omega$ where for a given $y \in \text{spt}(\nu)$, the choice of $\tilde{x}, x' \in \text{int}(\Omega '(y))$ (for any such $\tilde{x}, x'$ where we have used the notation from Theorem~\ref{thm:pleqninverse}). We then use Inequality~\eqref{eq:xtilde} and choosing $R$ large enough allows us to use Inequality~\eqref{eq:xRtilde}. We thus assert that:
\begin{equation}
\vert \phi_{\nu;R}(y) - \phi_{\nu}(y) \vert \leq C(\Omega '(y))k(R),
\end{equation}
for $y \in \text{spt}(\nu)$. When $y$ is such that $\text{dist}(\Omega ', \Omega)$ is very small, we still have the same asymptotic rate, but the constant $C(\Omega '(y))$ blows up.

\end{proof}

\subsection{Convergence Almost Everywhere for Inverse Brenier Mapping}\label{sec:inversemap}

Provided that $\nu$ is absolutely continuous with respect to the Lebesgue measure, it can be readily shown that $T_{\nu;R}$ converges to $T_{\nu}$ in measure. That is, the probability (with respect to the measure $\nu$) of the set where $\nabla \phi_{\nu;R}$ and $\nabla \phi_{\nu}$ differ more than a fixed $\epsilon>0$ goes to zero, which can be done by following a well-known exercise in Villani~\cite{Villani1}. However, we actually have more information about the inverse Brenier mapping, since we know the inverse Brenier mapping is connected to the inverse Brenier potentials, which are convex. Pointwise convergence of the inverse Brenier potentials can be used to show pointwise almost-everywhere convergence for the Brenier mapping. This is shown using the property of stability of subdifferentials of convex functions under limits and by the fact that the Brenier mapping is contained within the subdifferential of the Brenier potentials at each point $x \in \text{spt}(\nu)$.

\begin{theorem}
Provided that $\nu$ is absolutely continuous with respect to the Lebesgue measure on $\mathbb{R}^n$, then for $p>n$, we have that $T_{\nu;R} \rightarrow T_{\nu}$ pointwise almost everywhere on $\text{int}(\text{spt}(\nu))$, where $T_{\nu}$ is any optimal mapping. For $p \leq n$, let $\text{spt}(\nu)$. If $\phi_{\nu;R}$ and $\phi_{\nu}$ satisfy $\phi_{\nu;R} \rightarrow \phi_{\nu}$ pointwise, then $T_{\nu;R} \rightarrow T_{\nu}$ pointwise almost everywhere on $\mathbb{R}^n$, where $T_{\nu}$ is any mapping satisfying $T_{\nu}(x) = \nabla \phi_{\nu}(x)$ for almost every $x \in \mathbb{R}^n$.
\end{theorem}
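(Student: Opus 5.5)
The plan is to copy the argument of Theorem~\ref{thm:aemapping}, with the roles of the forward and inverse problems exchanged. The underlying principle is the same: for convex functions, pointwise convergence on an open convex set forces the subdifferentials to converge in the graphical sense of Rockafellar Theorem 24.5. At every point of differentiability of the limit, this pins down the limit of any selection from the approximating subdifferentials.

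\textbf{Step 1: pointwise convergence of the inverse potentials.} In the case $p>n$ this comes from the earlier results. Theorem~\ref{thm:delalandemerigot} gives $L^2$ convergence, Theorem~\ref{thm:explicitrates1} or Theorem~\ref{thm:rectangular} turns it into uniform convergence $\vert \phi_{\mu;R}-\phi_{\mu}\vert \le k(R)\to 0$ on $\Omega$, and Theorem~\ref{thm:pgn} then yields $\vert\phi_{\nu;R}(y)-\phi_{\nu}(y)\vert\le k(R)$ for all $y\in\text{spt}(\nu)$. In the case $p\le n$, pointwise convergence of $\phi_{\nu;R}$ to $\phi_\nu$ is assumed, and Theorem~\ref{thm:pleqninverse} supplies it under its hypotheses.

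\textbf{Step 2: finiteness and the domain.} By Lemma~\ref{lemma:lipschitz}, all of $\phi_{\nu}$ and $\phi_{\nu;R}$ are finite convex functions on $\mathbb{R}^n$ that are Lipschitz with the common constant $\tilde R$. Consequently, pointwise convergence on the open convex set $\text{int}(\text{spt}(\nu))$ (assumed convex, or handled on convex open subsets such as balls) upgrades to locally uniform convergence by Rockafellar Theorem 10.8. Equi-Lipschitzness also lets one pass from convergence on a dense set to convergence everywhere.

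\textbf{Step 3: convergence of the maps.} I then apply Rockafellar Theorem 24.5. For each $y$ in the open set and each $\epsilon>0$, there is $R_0$ such that for $R\ge R_0$,
\[
\partial\phi_{\nu;R}(y)\subset \partial\phi_\nu(y)+\epsilon B_1(0).
\]
By Brenier's theorem (Theorem~\ref{thm:brenier}) applied with $\nu$ absolutely continuous as the source, the optimal maps satisfy $T_{\nu;R}(y)\in\partial\phi_{\nu;R}(y)$ and $T_\nu(y)=\nabla\phi_\nu(y)$ for $\nu$-a.e.\ $y$. By Rademacher's theorem (or Theorem 25.5 of Rockafellar), $\phi_\nu$ is differentiable Lebesgue-a.e. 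At such points $\partial\phi_\nu(y)=\{\nabla\phi_\nu(y)\}$, so $T_{\nu;R}(y)\to\nabla\phi_\nu(y)$.

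\textbf{Step 4: collecting the null sets.} The exceptional set is the union of three pieces: the non-differentiability set of $\phi_\nu$, and the countably many null sets where $T_{\nu;R}\notin\partial\phi_{\nu;R}$ along a sequence $R_k\to\infty$. This union is Lebesgue-null. For a continuous parameter $R$, the subdifferential inclusion holds for every $R$ wherever $T_{\nu;R}$ is defined as a selection of $\partial\phi_{\nu;R}$, so only the first null set matters. Any two optimal maps $T_\nu$ agree with $\nabla\phi_\nu$ a.e., which gives the ``any optimal mapping'' clause.

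\textbf{Main obstacle.} I expect the hardest point to be the domain issue in the case $p\le n$. Pointwise convergence of $\phi_{\nu;R}$ is only known on $\text{spt}(\nu)$, while the conclusion is stated on $\mathbb{R}^n$. Outside $\text{spt}(\nu)$ the normalization of the potentials is not forced by the data. My first attempt would be to argue that under the support hypothesis of Theorem~\ref{thm:pleqninverse}, $\text{spt}(\nu)$ has full measure up to a compact piece, so the complement is handled on the interior of the support. Failing that, I would restrict the conclusion to $\text{int}(\text{spt}(\nu))$, using the equi-Lipschitz bound to extend convergence to closures.
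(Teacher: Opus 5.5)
Your proposal is correct at the paper's level of rigor and follows essentially the same route as the paper: pointwise convergence of the convex inverse potentials, then Rockafellar's Theorem 24.5 to get $\partial\phi_{\nu;R}(y)\subset\partial\phi_\nu(y)+\epsilon B_1(0)$ for large $R$, then the inclusion $T_{\nu;R}(y)\in\partial\phi_{\nu;R}(y)$, then Rademacher's theorem at differentiability points of $\phi_\nu$. You are more careful than the paper about taking selections of the subdifferential (so a continuum of $R$ creates no extra null sets) and about working on convex balls, though your Step 1 for $p>n$ quietly imports $p\ge 4$ and the domain and density hypotheses of the Delalande--M\'erigot and explicit-rate results, which the paper's proof also simply takes for granted. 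For $p\le n$, your fallback of concluding only on $\text{int}(\text{spt}(\nu))$ is exactly what the paper's argument yields; your first idea would not work, since $K\setminus\tilde K$ lies outside $\text{spt}(\nu)$ and can have positive Lebesgue measure.
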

\begin{proof}
In both cases, since $\phi_{\nu;R} \rightarrow \phi_{\nu}$ pointwise on $\text{spt}(\nu)$, then by Rockafellar Theorem 24.5~\cite{rockafellar}, we have that $\lim_{R \rightarrow \infty} \partial \phi_{\nu;R}(x) \subset \partial \phi_{\nu}(x)$ on $\text{int}(\text{spt}(\nu))$, since $\phi_{\nu;R}(x)$ limits to a finite number everywhere on $\text{int}(\text{spt}(\nu))$. By the Brenier-McCann theorem and the Kantorovich Theorem~\cite{maggi}, the optimal inverse mappings satisfy $T_{\nu;R}(x) \in \partial \phi_{\nu;R}(x)$ for $x \in \text{spt}(\nu_{R})$ and $T_{\nu}(x) \in \partial \phi_{\mu}(x)$ for $x \in \text{spt}(\nu)$. Therefore, $T_{\nu;R}(x) \in \partial \phi_{\nu;R}(x) \rightarrow \{ \nabla \phi_{\nu}(x) \} = \partial \phi_{\nu}(x)$ for any $x \in F$, where $F \subset \text{spt}(\nu)$ denotes the set of differentiability points of $\phi_{\nu}$. Since $\phi_{\nu}$ is convex, by Radamacher's theorem $T_{\nu;R}(x) \rightarrow T_{\nu}(x)$ almost everywhere. Let $S$ and $T$ are optimal maps, then $S = T$ $\nu$-a.e. by the Brenier-McCann Theorem. Thus, since $\nu$ is absolutely continuous with respect to Lebesgue, then $S = T$ almost everywhere.
\end{proof}

\section{Convergent Numerical Method for Approximating Elliptic PDEs on Unbounded Domains}\label{sec:computation}
In this section, we briefly discuss how provably convergent PDE-based schemes can be designed to solve the optimal transport problem from a measure with bounded support to a measure with unbounded support in light of the convergence results obtained in Sections~\ref{sec:radial} and~\ref{sec:general}. We begin with an introduction to the notation and key concepts for provably convergent monotone schemes for such fully nonlinear elliptic PDE. We then propose to use and discuss the results from Hamfeldt~\cite{HamfeldtBVP2} that can be used, for a fixed $R$, to solve the problem. We discuss how such schemes can be adapted to the current problem, including a discussion on boundary conditions, whether it is better to solve the forward or inverse problem, interpolation, and mention some topics untouched by the current work.

\subsection{Background and Definitions}

First, we will considering fully nonlinear elliptic second-order elliptic PDEs:
\begin{equation}\label{eq:elliptic}
F(x, u(x), Du(x), D^2 u(x)) = 0,
\end{equation}
for $x \in \Omega$, where $\Omega$ is a closed, compact subset of Euclidean space. At this point, the operator $F$ may encode boundary conditions for $x \in \partial \Omega$.

We introduce the approximation schemes and properties of such schemes that will allows us to construct a convergence proof. First, for a fixed domain $\Omega$, we define a grid $\mathcal{G}^h \subset \Omega$, which is a finite set. The discretization parameter $h>0$ is defined as $h:= \sup_{x \in \mathcal{G}^h} \inf_{x, y \in \mathcal{G}^h} \vert x - y \vert$. The discretization parameter $h$ is used in the notation for the grid $\mathcal{G}^h$ with the intention of showing that grids are ``refined" as $h \rightarrow 0$. In all of our convergence results, we will be taking $h \rightarrow 0$, so we assume that our grid has sufficient regularity to guarantee this.

We define a grid function $u^h:\mathcal{G}^h \rightarrow \mathbb{R}$. This grid function will solve a system of $N$ nonlinear algebraic equations, where $N$ is the cardinality of $\mathcal{G}^h$. The equations they will solve will be denoted as:
\begin{equation}
F^h(x, u^h(x), u^h(x) - u^h(\cdot)) =0,
\end{equation}
where $x \in \mathcal{G}^h$ and $u(x) - u(\cdot)$ is shorthand for differences $u(x) - u(y)$, where $y \neq x$ and $y \in \mathcal{G}^h$. We will think of $F^h$ as an operator and often write $F^h(x, u(x), u(x) - u(\cdot))$ for some function $u$ with the understanding that even though $u$ is not a grid function, the discrete operator $F^h$ is only evaluated at points on the grid. The hope is that a grid function satisfying $F^h(x, u^h(x), u^h(x) - u^h(\cdot)) =0$ will satisfy $u^h \rightarrow u$ as $h \rightarrow 0$ in the sense that if we interpolated the grid function $u^h$ onto $\Omega$ to form the interpolated function $U^h$, then we have $U^h \rightarrow u$ pointwise on $\Omega$.

In order to achieve convergence, we need the approximation scheme $F^h$ to ``mimic" the PDE. Thus follows the usual definition of consistency:
\begin{definition}
The approximation operator $F^h$ is consistent if for any $u \in C^2(\Omega)$, we have
\begin{equation}
\liminf_{h \rightarrow 0, y \in \mathcal{G}^h \rightarrow x, \xi \rightarrow 0} F^h(y_i, u(y_i) + \xi, u(y_i) - u(y_j)) = F(x, u(x), \nabla u(x), D^2 u(x)),
\end{equation}
for $x \in \Omega$.
\end{definition}

Another important property of such approximation schemes is monotonicity:
\begin{definition}
The discrete operator $F^h(x, u(x), u(x) - u(\cdot))$ is monotone if it is a non-increasing function of its final argument.
\end{definition}

\begin{definition}
A solution $u^h$ of an approximation scheme $F^h$ is said to be stable if there exists a constant $M>0$ such that $\vert u^h \vert \leq M$ for all $h>0$.
\end{definition}

\begin{definition}
An approximation scheme $F^h(x, u(x), u(x) - u(\cdot))$ is said to be continuous if it is continuous with respect to its last two arguments.
\end{definition}

\subsection{Convergence Result of Hamfeldt~\cite{HamfeldtBVP2}}\label{sec:Hamfeldt}

Suppose we have measures $\mu_0$ and $\mu_1$ supported on sets $X$ and $Y$ which are bounded subsets of $\mathbb{R}^n$ with density functions $f_0$ and $f_1$, respectively. Then, formally, the solution of the optimal transport problem with squared distance cost function $c(x,y) = \vert x - y \vert^2$ can be characterized as the solution of the following equation:
\begin{align}\label{eq:MAPDE}
f_1(\nabla u(x)) \det(D^2 u(x)) &= f_0(x) \\
\nabla u(X) &\subset Y.
\end{align}
This is known as the second boundary value problem of the Monge-Amp\`{e}re PDE, see Urbas~\cite{Urbas}. The constraint $\nabla u(X) \subset Y$ can be rewritten using, for example a signed distance function $H$, which is a continuous function $H: \mathbb{R}^n \rightarrow \mathbb{R}$ satisfying $H(y) < 0$ for $y \in Y$, $H(y)= 0$ for $y \in \partial Y$ and $H(y)>0$ for $y \notin Y$. Thus, one can rewrite the global constraint $\nabla u(X) \subset Y$ as the boundary condition $H(\nabla u(x)) = 0$ for $x, \in \partial X$.

The PDE~\ref{eq:MAPDE} is elliptic on the space of convex functions. To restrict the numerical method to only select convex solutions, we define the operator $\lambda_1(D^2u(x)) = \min_{\theta \in \mathbb{S}^{n-1}} \frac{\partial^2 u(x)}{\partial \theta^2}$, that is the smallest eigenvalue of the Hessian $D^2u(x)$. We thus define the following modified PDE:

\begin{equation}
\max \{ -f_1(\nabla u(x)) \det (D^2 u(x)) + f_0(x), - \lambda_1 (D^2 u(x)), H(\nabla u(x)) = 0,
\end{equation}
a weak solution solution of such a PDE is expected to not only solve the Monge-Amp\`{e}re PDE, but also be convex and satisfy the global constraint $\nabla u(X) \subset Y$. The natural notion of a weak solution of such a PDE is the viscosity solution, see Crandall, Ishii, and Lions~\cite{CIL}, which in Hamfeldt~\cite{HamfeldtBVP2} is shown to be equivalent to a unique (up to an additive constant) convex Aleksandrov solution of the Monge-Amp\`{e}re PDE, and therefore is a potential function for the optimal transport problem.

\begin{definition}
We say that a convex function $u$ is an Aleksandrov solution of the Monge-Amp\`{e}re equation~\eqref{eq:MAPDE} if
\begin{equation}
\int_{E} f(x)dx = \int_{\partial u(E)} g(y)dy,
\end{equation}
for every measurable set $E \subset X$, where $\partial u(E)$ is the subdifferential of $u$ over the set $E$, see Definition~\ref{def:subdifferential}.
\end{definition}

\begin{hypothesis}
We impose the following hypotheses on the problem:
\begin{enumerate}
\item $X$, $Y$ are bounded and $Y$ is convex
\item $f_0 \in L^1(X)$ is lower semi-continuous
\item $f_1 \in L^1(Y)$ is positive on $Y$ and upper semicontinuous
\end{enumerate}
\end{hypothesis}

The first result of Hamfeldt~\cite{HamfeldtBVP2} is that certain monotone approximation schemes are guaranteed to have a stable solution. This uses the slightly modified definition of consistency:
\begin{definition}
The scheme $F^h$ is consistent on $X$ if for any smooth convex function $u$ and $x \in X$,
\begin{equation}
\liminf_{h \rightarrow 0, y \in \mathcal{G}^h \rightarrow x, \xi \rightarrow 0} F^h(y_i, u(y_i) + \xi, u(y_i) - u(y_j)) \geq F(x, u(x), \nabla u(x), D^2 u(x)).
\end{equation}
The scheme is uniformly consistent if it is consistent and the limit inferior is achieved uniformly on $X$.
\end{definition}

To guarantee stability and therefore convergence to weak solutions (viscosity solutions or Aleksandrov solutions) of fully nonlinear elliptic PDE, it has been found in Hamfeldt~\cite{HamfeldtBVP2} and Hamfeldt and Turnquist~\cite{HT_OTonSphere}, for example, that an important property of the scheme is that it is underestimating.
\begin{definition}
An approximation scheme $F^h$ is said to be underestimating if
\begin{equation}
F^h(x_i, u(x_i), u(x_i) - u(x_j)) \leq \max \{-f_1(\nabla u(x)) \det(D^2 u(x)) + f_0(x), -\lambda_1(D^2 u(x)), H(\nabla u(x)) \},
\end{equation}
for every smooth convex function $u$ and $x \in \mathcal{G}^h \cap X$.
\end{definition}
Note that it is, in practice, not difficult to obtain an underestimating scheme, since we can take any consistent, monotone scheme $F^h$ and then replace it with $F^h - h^{\alpha}$, where $h^{\alpha}$ is related to the formal consistency error of the scheme $F^h$.

\begin{theorem}
Let $F^h$ be a uniformly consistent, monotone, continuous, strictly underestimating approximation scheme. Then, the approximation scheme has a solution $u^h$ and the solution is stable.
\end{theorem}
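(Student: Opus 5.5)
We prove the statement in two steps. First we show that the discrete system $F^h(x,u^h(x),u^h(x)-u^h(\cdot))=0$ has a solution for each fixed $h$. Then we show that these solutions are bounded uniformly in $h$. The approach is the standard Perron / sub-super-solution strategy for monotone schemes, which Hamfeldt~\cite{HamfeldtBVP2} adapts to the second boundary value problem. Since $u^h$ is only defined up to an additive constant, we fix the constant throughout, e.g.\ by imposing $u^h(x_0)=0$ at a reference grid point, or by adding a small $\delta u(x)$ term that is removed at the end. All bounds below are meant modulo this normalization.

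\textbf{Barrier functions.} We build explicit smooth convex barriers whose gradients exit $Y$ in a controlled way. Take a point $y_0\in\text{int}(Y)$, possible since $Y$ is convex with nonempty interior, and consider quadratics $q_{a,c}(x)=\tfrac{a}{2}\vert x\vert^2+y_0\cdot x+c$.

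\emph{Supersolution.} For small $a$, $\nabla q_{a,c}(X)$ lies in a small ball around $y_0$ inside $Y$. Then $H(\nabla q)<0$ and $\lambda_1>0$. The remaining term $-f_1\det D^2q+f_0$ must be made positive. Here we use that $X$ is bounded and $f_0$ lower semicontinuous. If $f_0\geq 0$ is not strictly positive, we perturb by subtracting a constant.

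\emph{Subsolution.} For large $a$ the gradient image $\nabla q_{a,c}(X)$ leaves $Y$ near $\partial X$, so the max operator is positive in the appropriate sense.

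Strict underestimation upgrades these continuous inequalities to discrete ones: $F^h[\underline u]\le 0\le F^h[\overline u]$ on the grid. Uniform consistency then holds with margins independent of $h$.

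\textbf{Existence.} With the barriers ordered, $\underline u\le\overline u$ after adjusting $c$, we apply a discrete Perron argument. Take $u^h=\sup\{v:\ F^h[v]\le0,\ v\le\overline u\}$. The monotonicity of $F^h$ in its last argument and its continuity in the last two arguments give that this supremum is itself a subsolution. If $F^h[u^h](x_i)<0$ at some node, continuity lets us raise $u^h(x_i)$ slightly, contradicting maximality. Hence $u^h$ is a solution. Alternatively, one can use Brouwer's fixed point theorem on the map $u\mapsto u-\rho F^h[u]$ restricted to the order interval $[\underline u,\overline u]$, which monotonicity makes invariant for small $\rho$.

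\textbf{Stability (main obstacle).} The bound $\vert u^h\vert\le M$ uniformly in $h$ should follow from $\underline u\le u^h\le\overline u$ with $h$-independent barriers. The difficulty is that the second boundary condition gives no Dirichlet data, so there is no direct comparison principle, and solutions are only unique up to constants.

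We would first try a discrete comparison argument:
\begin{enumerate}
\item Consider $\max(u^h-\overline u)$ at a node. If the maximum occurs in the interior, monotonicity together with the strict inequality from strict underestimation gives a contradiction.
\item If the maximum occurs on $\partial X$, the boundary operator $H(\nabla\cdot)$ is monotone in the discrete normal difference, which again gives a contradiction.
\end{enumerate}
This yields $u^h-\overline u\le \text{const}$. Combined with the normalization, it gives a two-sided bound. If the boundary case fails because the $\max$ with $-\lambda_1$ interferes, we fall back on a uniform discrete Lipschitz bound. This bound follows from $H(\nabla u^h)\le 0$ forcing the discrete gradients into a neighborhood of the bounded set $Y$, and then the normalization bounds $u^h$ on the bounded set $X$.
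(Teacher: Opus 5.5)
The paper gives no proof of this statement; it is quoted from Hamfeldt~\cite{HamfeldtBVP2}. Judged on its own, your argument has a genuine gap in both halves. It treats a problem that is invariant under adding constants as if it had Dirichlet-type barriers and a comparison principle.

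\textbf{The barriers.} In the max formulation, a subsolution ($F\le 0$) must satisfy three conditions at once: convexity, $H(\nabla u)\le 0$, and $f_1(\nabla u)\det D^2u\ge f_0$.
\begin{itemize}
\item Your ``large $a$'' quadratic has $H(\nabla q)>0$ near $\partial X$, hence $F[q]>0$. It is a supersolution, which contradicts your own convention $F^h[\underline u]\le 0$.
\item No explicit function can replace it. For a smooth convex subsolution, the area formula and mass balance give $\int_X f_0\le\int_X f_1(\nabla u)\det D^2u\,dx=\int_{\nabla u(X)}f_1\le\int_Y f_1=\int_X f_0$. Every inequality is therefore an equality, and $u$ is already a smooth transport potential.
\item Consequently the Perron set $\{v:F^h[v]\le 0,\ v\le\overline u\}$ has no exhibited member, and your Brouwer alternative needs the same missing lower barrier.
\item Underestimation $F^h\le F$ only turns $F\le 0$ into $F^h\le 0$. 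It cannot give $F^h[\overline u]\ge 0$, which needs uniform consistency plus a positive margin. Your small-$a$ supersolution has no margin where $f_0$ vanishes, and ``subtracting a constant'' changes the equation.
\end{itemize}

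\textbf{The comparison step.} More fundamentally, your scheme sees only differences; that is what your normalization presupposes. For such a monotone scheme, the comparison step in your stability argument proves nonexistence, not a bound.
\begin{itemize}
\item Suppose $F^h[\overline u]>0$ at every node, and let $x_i$ maximize $u^h-\overline u$. Then $u^h(x_i)-u^h(x_j)\ge\overline u(x_i)-\overline u(x_j)$ for all $j$, so $0=F^h[u^h](x_i)\ge F^h[\overline u](x_i)>0$.
\item A maximum always exists on a finite grid, and the argument is blind to constants added to $\overline u$. So ``$u^h-\overline u\le$ const'' never comes out; a strict global supersolution simply rules out every solution.
\item Such supersolutions exist in your formulation. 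Take $\overline u(x)=p\cdot x$ with $H(p)>0$. Then $F[\overline u]>0$ on $X$, at least when $f_0$ is bounded below, and hence $F^h[\overline u]>0$ for small $h$ by uniform consistency.
\item The normalization has the same defect. Fixing $u^h(x_0)=0$ leaves $N$ equations for $N-1$ unknowns. Letting a $\delta u$ regularization tend to zero only yields $F^h[v]=c^h$ for an unknown ergodic constant $c^h$, and the computation above even forces $c^h>0$.
\end{itemize}

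\textbf{What is missing.} You need a discrete compatibility (mass-balance) mechanism: a formulation in which the additive constant is part of the discrete problem and is controlled. That is where the structure of the scheme near $\partial X$ and the strict underestimation must enter, and the stability bound has to be derived from that structure. Your Lipschitz fallback does not supply this either. Consistency constrains $F^h$ only on smooth convex functions, so it says nothing about what $H^h[u^h]\le 0$ implies for a rough grid function.
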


The main result is Theorem 29 from Hamfeldt~\cite{HamfeldtBVP2}, which states that:
\begin{theorem}
Let $u^h$ be any solution of the consistent, monotone, stable approximation scheme. Defining the piecewise constant nearest-neighbors extension:
\begin{equation}
U^h(x) = \sup \left\{ u^h(y) \vert y \in \mathcal{G}^h, \vert y - x\vert = \min_{z \in \mathcal{G}^h} \vert z - x \vert \right\},
\end{equation}
then, for any $x \in X$, $\lim_{h \rightarrow 0}$, where $u(x)$ is an Aleksandrov solution of Equation~\eqref{eq:MAPDE}.
\end{theorem}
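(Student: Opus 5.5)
The plan follows the Barles--Souganidis framework, adapted to the fact that the second boundary value problem has no strong comparison principle. Let $u^h$ be solutions of the consistent, monotone scheme, uniformly bounded by stability, $\vert u^h \vert \leq M$. Define the half-relaxed limits
\begin{equation*}
\overline{u}(x) := \limsup_{h \rightarrow 0, \, y \rightarrow x} U^h(y), \qquad \underline{u}(x) := \liminf_{h \rightarrow 0, \, y \rightarrow x} U^h(y).
\end{equation*}
Stability makes both finite on $X$, with $\overline{u}$ upper semicontinuous and $\underline{u}$ lower semicontinuous. The piecewise constant nearest-neighbour extension $U^h$ only matters through these envelopes, since grid points accumulate at every $x \in X$ as $h \rightarrow 0$.

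\textbf{Step 1 (envelopes are viscosity sub- and supersolutions).} Take a smooth convex test function $\varphi$ touching $\overline{u}$ from above at $x_0$, made strict by adding $\vert x - x_0 \vert^4$. Then $U^h - \varphi$ has maximizers $y_h \in \mathcal{G}^h$ with $y_h \rightarrow x_0$ and $u^h(y_h) - \varphi(y_h) =: \xi_h \rightarrow 0$ along a subsequence. Monotonicity gives
\begin{equation*}
0 = F^h(y_h, u^h(y_h), u^h(y_h) - u^h(\cdot)) \geq F^h(y_h, \varphi(y_h) + \xi_h, \varphi(y_h) - \varphi(\cdot)).
\end{equation*}
Passing to the $\liminf$ and using the (one-sided) consistency definition shows that $\overline{u}$ is a subsolution of
\begin{equation*}
\max \{ -f_1(\nabla u)\det(D^2 u) + f_0, \, -\lambda_1(D^2 u), \, H(\nabla u) \} = 0,
\end{equation*}
with the boundary operator $H(\nabla u)$ entering in the relaxed, Barles-type sense on $\partial X$. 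The symmetric argument shows that $\underline{u}$ is a supersolution. The semicontinuity hypotheses on $f_0$ (lower) and $f_1$ (upper) are exactly what is needed to pass to the limit in the right direction.

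\textbf{Step 2 (identify the limits with Aleksandrov solutions).} The $-\lambda_1$ term forces $\overline{u}$ to be convex in the viscosity sense, hence convex. Using the equivalence proved earlier in Hamfeldt~\cite{HamfeldtBVP2} between this modified PDE and Aleksandrov solutions, together with the boundary condition $H(\nabla u) \leq 0$ (equivalently $\partial \overline{u}(X) \subset \overline{Y}$), one obtains mass inequalities for the Monge--Amp\`{e}re measures:
\begin{equation*}
\int_{\partial \overline{u}(E)} f_1 \geq \int_E f_0, \qquad \int_{\partial \underline{u}(E)} f_1 \leq \int_E f_0 \quad \text{for Borel } E \subset X.
\end{equation*}
Both $\overline{u}$ and $\underline{u}$ carry total mass $\int f_0 = \int f_1$, so these inequalities are equalities. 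Here the convexity of $Y$ and the positivity of $f_1$ are used. Hence both envelopes are Aleksandrov solutions with gradients mapping into $Y$.

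\textbf{Step 3 (collapse the envelopes), the main obstacle.} Urbas' uniqueness gives $\overline{u} = \underline{u} + c$ for some constant $c \geq 0$, since $\underline{u} \leq \overline{u}$ by construction. There is no comparison principle to force $c = 0$, so the additive constant must be pinned down. I would first try to show that the scheme fixes the constant through a normalization built into $F^h$, for instance prescribing $u^h$ at one grid point or fixing its mean, and that this normalization passes to both envelopes. If the scheme provides no such normalization, the conclusion should be read as convergence to an Aleksandrov solution modulo constants along subsequences. Once $c = 0$, we have $\overline{u} = \underline{u} = u$, which is then continuous, and $U^h(x) \rightarrow u(x)$ for every $x \in X$.
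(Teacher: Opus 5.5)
The paper gives no proof of this statement. It is quoted from Hamfeldt~\cite{HamfeldtBVP2}, and the conclusion, presumably $\lim_{h\to0}U^h(x)=u(x)$, is garbled in transcription. The key point of that source is that, since the second boundary value problem has no comparison principle, the optimal transport solution has to be identified through viscosity \emph{subsolutions} of the modified PDE alone. Your treatment of $\overline{u}$ is in that spirit, but the proposal has two genuine gaps.

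The first gap is Step 2 for $\underline{u}$. The lower envelope is at best a supersolution of the $\max$-operator. That inequality holds trivially wherever a test function touching from below is non-convex (then $-\lambda_1(D^2\varphi)>0$) or has gradient outside $\overline{Y}$ (then $H(\nabla\varphi)>0$). So nothing makes $\underline{u}$ convex or gives $\partial\underline{u}(X)\subset\overline{Y}$. The inequality $\int_{\partial\underline{u}(E)}f_1\le\int_E f_0$ has no meaning for it. The total-mass argument works for $\overline{u}$ only because $\partial\overline{u}(X)\subset\overline{Y}$, so it does not transfer to $\underline{u}$. This is exactly where the missing comparison principle bites.

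In addition, the one-sided consistency $\liminf F^h\ge F$ used in this framework yields only the subsolution property of $\overline{u}$. Your ``symmetric argument'' for $\underline{u}$ would need $\limsup F^h\le F$, which is not assumed. Relatedly, because you test only with convex $\varphi$, the $-\lambda_1$ term is automatically nonpositive, so Step 1 gives no information about the convexity of $\overline{u}$. That convexity has to come from elsewhere, e.g.\ from discrete convexity of $u^h$ built into the scheme.

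The second gap is Step 3, which fails even if both envelopes were Aleksandrov solutions. A normalization survives passage to half-relaxed limits only as inequalities. The condition $u^h(x_0)=0$ gives $\underline{u}(x_0)\le0\le\overline{u}(x_0)$, and a zero-mean condition gives $\int\underline{u}\le0\le\int\overline{u}$ by Fatou. So $c=0$ does not follow.

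Without a normalization the claim cannot hold at all when $F^h$ does not depend on its second argument. In that case $u^h+c_h$, with bounded oscillating constants $c_h$, is again a stable solution. Your fallback (subsequential convergence modulo constants) is therefore strictly weaker than the statement.

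What is missing is compactness of the discrete solutions themselves. For instance, uniform discrete convexity and gradient bounds inherited from the $-\lambda_1$ and $H$ components of the scheme would give local equicontinuity of $U^h$. Then:
\begin{enumerate}
\item along every subsequence, $U^h$ converges locally uniformly, so the upper and lower limits coincide;
\item the limit is a convex subsolution, hence an Aleksandrov solution;
\item a point normalization passes to the limit with equality;
\item uniqueness up to constants then gives convergence of the whole family.
\end{enumerate}
As written, your argument establishes at most that $\overline{u}$ is an Aleksandrov solution.
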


What this then implies is that any consistent, monotone, and stable approximation $F^h$ can be used, for example one whose formal consistency error rapidly decreases to zero. In Hamfeldt~\cite{HamfeldtBVP2} a method with a small formal consistency error is proposed.

\subsection{Discussion of Convergent Schemes for Optimal Transport on Unbounded Domains}

Section~\ref{sec:radial} we saw that in the radial case that we had nice convergence properties for a wide variety of density functions and for a wide variety of cost functions. Thus, it appears plausible that convergent schemes may work well for such cost functions, although the proper justification for this is beyond the scope of this paper. For the cost function $c(x,y) = \vert x - y \vert^2$, in Section~\ref{sec:general}, we established many results on the convergence as $R \rightarrow \infty$. The strongest of such results yielded explicit (not asymptotic) rates for target density functions that satisfied $p > n$ and $p>4$, where $p$ is the highest moment of $\nu$. For density functions that are log-concave, the convergence rate was exponential. Thus, the convergence is well-justified in some cases for the squared distance cost function.

For the squared cost function, we therefore propose fixing a ``large" value of $R$, and choosing to solve the following optimal transport problem using the techniques from Hamfeldt~\cite{HamfeldtBVP2}. Let $\mu$ with density function $g$ have support on a bounded set $\Omega \subset \mathbb{R}^n$ and let $\nu$ with density function $f$ have support on $\mathbb{R}^n$. Define the density function:
\begin{equation}
f_{R_{C}}(x):=
\begin{cases}
f(x)/\nu(RC_{1}(0)), & \text{for} \ x \in RC_{1}(0) \\
0, &\text{otherwise}.
\end{cases}
\end{equation}
Let $\nu_{R_{C}}$ be the probability measure on $\mathbb{R}^n$ with density function $f_{R_{C}}$.

If one wants to solve for $\phi$, the Brenier potential from a source measure with bounded support to a target measure with unbounded support, the question is whether it is better to solve the forward problem or the inverse problem and then interpolate. As established in Section~\ref{sec:general}, in the best case the convergence is the same. For the forward problem, in cases where the potential function $\phi$ is not known to be bounded for the general case $p>2$, we may have issues for large $R$ related to this blowup, or to the fact that the target density function $f$ is not strictly bounded away from zero on $\mathbb{R}^n$. Thus, as $R \rightarrow \infty$, there may be stability issues that arise. These issues will investigated in further numerical work. Thus, we may be able to also solve the inverse problem and then interpolate.

Thus, in order to solve the optimal transport problem from $\mu$ to $\nu$, we propose to numerically compute the solution of the following Monge-Amp\`{e}re PDE:
\begin{align}
f_{R_{C}} (\nabla u(x)) \det (D^2 u(x)) &= g(x) \\
\nabla u(RC_{1}(0)) &\subset \Omega.
\end{align}

As explained in Section~\ref{sec:Hamfeldt}, this requires the construction of a uniformly consistent, monotone, stable, continuous, and strictly underestimating scheme $F^h$ of the operator $F(x,u(x), Du(x), D^2 u(x)):= -f_{R_{C}} (\nabla u(x)) \det (D^2 u(x)) +g(x)$, the construction of a signed distance function $H$ for $\Omega$, and an approximation of the smallest eigenvalue of the Hessian. Details can be found in Hamfeldt~\cite{HamfeldtBVP2} where the author proposes a discretization with a small formal consistency error. Conceivably, a very quickly convergent scheme like the spectral Galerkin method proposed in Jin et al~\cite{spectralgalerkin} could be used since the computational domain is rectangular.

\begin{remark}
The convergence framework of Hamfeldt~\cite{HamfeldtBVP2} does not allow for the domain $\Omega$ to change in size. Thus, the current framework does not allow for the parameter $R$, to change, for example, as a function of $h$. This avenue of approach will be investigated in further work.
\end{remark}

\section{Conclusion}\label{sec:conclusion}
In this manuscript, we have established many results on the pointwise convergence for the optimal transport problem from a source measure with bounded support to a target measure with unbounded support by using a cutoff approximation of the latter problem. The results are for quite general cost functions in the radially symmetric case for the cutoff approximation. For the cost function $c(x,y) = \vert x - y \vert^2$, in the general case, building off previous results by Delalande and M\`{e}rigot, we saw rapid convergence in some cases for the cutoff approximation. The main result of this manuscript show that for the general case, when is $\mu$ an $\nu$ have density functions, $p>n$ and $p>4$, the source domain $\Omega$ is convex, the density function of the source measure is bounded away from zero and infinity that we have pointwise convergence of the Brenier potentials and almost-everywhere convergence of the optimal transport mapping, only assuming that $\mu_{R} \rightharpoonup \nu$ weakly. Further refining the geometrical assumptions on the domain and exploiting the H\"{o}lder continuity property of the Brenier potentials we get explicit convergence rates depending on the explicit rate of convergence of the quantity $W_1(\nu_{R}, \nu)$. This quantity could then be explicitly found in the case of the cutoff approximation (and can, of course, be explicitly found in other cases as well). The convergence of the cutoff approximation was shown to be quite fast, in fact exponentially fast in the case of $log$-concave measures. We then derived many other convergence guarantees slightly lifting the assumptions on the moments of the target distribution, in anticipation that such results will be derived in the future. We finally derived convergence guarantees for the inverse Brenier potentials and the inverse optimal transport mapping. We finished with a discussion of one particular provably convergent numerical method that can be used to solve the resulting cutoff approximation problem.

\bigskip

\textbf{Acknowledgement.} The author acknowledges the support of the Beijing Natural Science Foundation BJNSF ISP.

\bibliographystyle{plain}
\bibliography{ThreeSystems}

\end{document}